\numberwithin{equation}{section}
\theoremstyle{plain}
\newtheorem{theorem}[equation]{Theorem}
\newtheorem{lemma}[equation]{Lemma}
\newtheorem{proposition}[equation]{Proposition}
\newtheorem{corollary}[equation]{Corollary}
\theoremstyle{definition}
\newtheorem{definition}[equation]{Definition}
\newtheorem{example}[equation]{Example}
\newtheorem{remark}[equation]{Remark}
\tikzstyle{dot}=[circle, draw=black, fill=black!25, inner sep=.4ex]
\newif\ifvflip\pgfkeys{/tikz/vflip/.is if=vflip}
\newif\ifhflip\pgfkeys{/tikz/hflip/.is if=hflip}
\newif\ifhvflip\pgfkeys{/tikz/hvflip/.is if=hvflip}
\newlength\morphismheight
\newlength\wedgewidth
\tikzset{width/.initial=1mm}
\tikzstyle{morphism}=[font=\small,morphismshape]
\newcommand{\tinymult}[1][dot]{
\smash{\raisebox{-2pt}{\hspace{-5pt}\ensuremath{\begin{pic}[scale=0.4,yscale=-1]
    \node (0) at (0,0) {};
    \node[#1, inner sep=1.5pt] (1) at (0,0.55) {};
    \node (2) at (-0.5,1) {};
    \node (3) at (0.5,1) {};
    \draw (0.center) to (1.center);
    \draw (1.center) to [out=left, in=down, out looseness=1.5] (2.center);
    \draw (1.center) to [out=right, in=down, out looseness=1.5] (3.center);
    \node[#1, inner sep=1.5pt] (1) at (0,0.55) {};
\end{pic}
}\hspace{-3pt}}}}
\newcommand{\tinyunit}[1][dot]{
\smash{\raisebox{1pt}{\hspace{-3pt}\ensuremath{\begin{pic}[scale=0.4,yscale=-1]
    \node (0) at (0,0) {};
    \node[#1, inner sep=1.5pt] (1) at (0,0.55) {};
    \draw (0.center) to (1.north);
\end{pic}
}\hspace{-1pt}}}}
\newcommand{\tinycomult}[1][dot]{
\smash{\raisebox{-2pt}{\hspace{-5pt}\ensuremath{\begin{pic}[scale=0.4,yscale=1]
    \node (0) at (0,0) {};
    \node[#1, inner sep=1.5pt] (1) at (0,0.55) {};
    \node (2) at (-0.5,1) {};
    \node (3) at (0.5,1) {};
    \draw (0.center) to (1.center);
    \draw (1.center) to [out=left, in=down, out looseness=1.5] (2.center);
    \draw (1.center) to [out=right, in=down, out looseness=1.5] (3.center);
    \node[#1, inner sep=1.5pt] (1) at (0,0.55) {};
\end{pic}
}\hspace{-3pt}}}}
\newcommand{\tinycounit}[1][dot]{
\smash{\raisebox{-3pt}{\hspace{-3pt}\ensuremath{\begin{pic}[scale=0.4,yscale=1]
    \node (0) at (0,0) {};
    \node[#1, inner sep=1.5pt] (1) at (0,0.55) {};
    \draw (0.center) to (1.south);
\end{pic}
}\hspace{-1pt}}}}
\newcommand\sxto[1]{\mathbin{\smash{
\begin{tikzpicture}[baseline={([yshift=-1pt]
current bounding box.south)}]
    \node (A) at (0,0) [inner xsep=0pt, inner ysep=1pt, minimum width=0.15cm] {\ensuremath{\scriptstyle #1}};
    \draw [->, line width=0.4pt, line cap=round]
        ([xshift=-2.5pt] A.south west)
        to ([xshift=3pt] A.south east);
\end{tikzpicture}}}}
\newenvironment{pic}[1][]
{\begin{aligned}\begin{tikzpicture}[font=\tiny,#1]}
{\end{tikzpicture}\end{aligned}}
\newcommand{\cat}[1]{\ensuremath{\mathbf{#1}}}
\newcommand{\op}{\ensuremath{{}^{\mathrm{op}}}}
\newcommand{\id}[1][]{\ensuremath{\mathrm{id}_{#1}}}
\DeclareMathOperator{\ev}{ev}
\DeclareMathOperator{\st}{st}
\DeclareMathOperator{\dst}{dst}
\DeclareMathOperator{\Tr}{Tr}
\DeclareMathOperator{\Ima}{Im}
\DeclareMathOperator{\FEM}{FEM}
\DeclareMathOperator{\EM}{EM}
\DeclareMathOperator{\Kl}{Kl}
\DeclareMathOperator{\DFMonad}{DFMonad}
\newcommand{\ie}{\textit{i.e.}\xspace}
\newcommand{\eg}{\textit{e.g.}\xspace}
\title{Monads on dagger categories}
\author{Chris Heunen and Martti Karvonen}
\address{School of Informatics, University of Edinburgh\\10 Crichton Street, Edinburgh EH8 9AB, United Kingdom}
\keywords{Dagger category, Frobenius monad, Kleisli algebra, Eilenberg-Moore algebra}
\begin{document}
\maketitle
\begin{abstract}
  The theory of monads on categories equipped with a dagger (a contravariant identity-on-objects involutive endofunctor) works best when all structure respects the dagger: the monad and adjunctions should preserve the dagger, and the monad and its algebras should satisfy the so-called Frobenius law.
  Then any monad resolves as an adjunction, with extremal solutions given by the categories of Kleisli and Frobenius-Eilenberg-Moore algebras, which again have a dagger.
  We characterize the Frobenius law as a coherence property between dagger and closure, 
  and characterize strong such monads as being induced by Frobenius monoids.
\end{abstract}

\section{Introduction} 

Duality is a powerful categorical notion, and the relationship between (structures in) a category and its dual is fruitful to study.
Especially in self-dual categories, properties may coincide with their dual properties. This article focuses on a specific kind of self-dual category, that has applications in quantum theory~\cite{heunenvicary:cqm} and reversible computing~\cite{heunenkarvonen:reversible,bowmanetal:traced,axelsenkaarsgaard:reversiblerecursion}, amongst others, namely \emph{dagger categories}: categories equipped with a contravariant identity-on-objects involutive endofunctor, called the dagger~\cite{selinger:cpm}. Such categories can behave quite differently than ordinary categories, for example in their limit behaviour~\cite{vicary:daggerlimits}, subobjects~\cite{heunenjacobs:daggerkernels}, additive properties~\cite{heunen:embedding}, or homotopy-theoretical foundations~\cite[9.7]{hott}. As a first step towards a general theory of dagger categories, we study monads on such categories.

A monad on a dagger category is automatically also a comonad. We contend that the theory works best when the monad and comonad satisfy the following \emph{Frobenius law}, depicted in a graphical calculus that will be reviewed in Section~\ref{sec:pictures}. 
\[
  \begin{pic}
   \draw (0,0) to (0,1) to[out=90,in=180] (.5,1.5) to (.5,2);
   \draw (.5,1.5) to[out=0,in=90] (1,1) to[out=-90,in=180] (1.5,.5) to (1.5,0);
   \draw (1.5,.5) to[out=0,in=-90] (2,1) to (2,2);
   \node[dot] at (.5,1.5) {};
   \node[dot] at (1.5,.5) {};
  \end{pic}
  \quad = \quad
  \begin{pic}[xscale=-1]
   \draw (0,0) to (0,1) to[out=90,in=180] (.5,1.5) to (.5,2);
   \draw (.5,1.5) to[out=0,in=90] (1,1) to[out=-90,in=180] (1.5,.5) to (1.5,0);
   \draw (1.5,.5) to[out=0,in=-90] (2,1) to (2,2);
   \node[dot] at (.5,1.5) {};
   \node[dot] at (1.5,.5) {};
  \end{pic}
\]
This law has the following satisfactory consequences.
\begin{itemize}
  \item Any pair of adjoint functors that also preserve daggers induces a monad satisfying the law; see Section~\ref{sec:adjunctions}.
  \item The Kleisli category of a monad that preserves daggers and satisfies the law inherits a dagger; see Section~\ref{sec:fem}.
  \item For such a monad, the category of those Eilenberg-Moore algebras that satisfy the law inherits a dagger; see Section~\ref{sec:fem}.
  \item In fact, this Kleisli category and Frobenius-Eilenberg-Moore category are the initial and final resolutions of such a monad as adjunctions preserving daggers in the dagger 2-category of dagger categories; see Section~\ref{sec:formal}.
  \item Any monoid in a monoidal dagger category satisfying the law induces a monad satisfying the law; see Section~\ref{sec:monads}.
  \item Moreover, the adjunction between monoids and strong monads becomes an equivalence in the dagger setting; see Section~\ref{sec:strength}.
\end{itemize}
Additionally, Section~\ref{sec:coherence} characterizes the Frobenius law as a natural coherence property between the dagger and closure of a monoidal category. 

Because of these benefits, it is tempting to simply call such monads `dagger monads'. However, many of these results also work without daggers, see~\cite{street:ambidextrous,lauda:ambidextrous,bulacutorrecillas:extensions}. This paper is related to those works, but not a straightforward extension. 
Daggers and monads have come together before in coalgebra~\cite{jacobs:involutive,jacobs:walks}, quantum programming languages~\cite{greenetal:quipper,selingervaliron:lambda}, and matrix algebra~\cite{devosdebaerdemacker:matrix}. The current work differs by taking the dagger into account as a fundamental principle from the beginning.
Finally, Section~\ref{sec:strength} is a noncommutative generalization of~\cite[Theorem~4.5]{pavlovic:abstraction}. It also generalizes the classic Eilenberg-Watts theorem~\cite{watts:eilenberg}, that characterizes certain endofunctors on abelian categories as being of the form $- \otimes B$ for a monoid $B$, to monoidal dagger categories; note that there are monoidal dagger categories that are not abelian~\cite[Appendix~A]{heunen:embedding}.
This article extends an earlier conference proceedings version~\cite{heunenkarvonen:reversible}.
We thank Tom Leinster, who inspired Examples~\ref{ex:dagbiprods} and~\ref{ex:imdagadj}.

\section{Dagger categories}\label{sec:dagger}

We start by introducing dagger categories. They can behave quite differently from ordinary (non-dagger) ones, see \eg~\cite[9.7]{hott}, and are especially useful as semantics for quantum computing~\cite{heunenvicary:cqm}. 

\begin{definition}
  A \emph{dagger} is a functor $\dag\colon\cat{D}\op\to\cat{D}$ satisfying $A^\dag=A$ on objects and $f^{\dag\dag}=f$ on morphisms. A \emph{dagger category} is a category equipped with a dagger. 
\end{definition}

By slight abuse of terminology, we will also call $f^\dag$ the dagger of a morphism $f$.

\begin{example}\label{ex:daggers}
  Dagger categories are plentiful:
  \begin{itemize}
	\item Any groupoid is a dagger category with $f^\dag = f^{-1}$.
    \item Any monoid $M$ equipped with an involutive homomorphism $f\colon M\op\to M$ may be regarded as a one-object dagger category with $x^\dag=f(x)$.
	\item The category \cat{Hilb} of (complex) Hilbert spaces and bounded linear maps is a dagger category, taking the dagger of $f \colon A \to B$ to be its adjoint, i.\ e.\ the unique morphism satisfying $\langle f(a), b \rangle = \langle a, f^\dag(b) \rangle$ for all $a \in A$ and $b \in B$. Write \cat{FHilb} for the full subcategory of finite-dimensional Hilbert spaces; this is a dagger category too.
    \item Write $\cat{Rel}$ for the category with sets as objects and relations $R \subseteq A \times B$ as morphisms $A \to B$; composition is $S \circ R = \{ (a,c) \mid \exists b \in B \colon (a,b) \in R, (b,c) \in S \}$. This becomes a dagger category with $R^\dag=\{(b,a)\mid (a,b)\in R\}$. 
    Equivalently, we may specify $R \subseteq A \times B$ as a function from $A$ to the powerset of $B$.
  \end{itemize}
\end{example}

The guiding principle when working with dagger categories, also known as `the way of the dagger', is that all structure in sight should cooperate with the dagger. For example, more important than isomorphisms are \emph{unitaries}: isomorphisms $f$ whose dagger $f^\dag$ equals its inverse $f^{-1}$. The terminology derives from the category of Hilbert spaces. Similarly, an endomorphism $f$ that equals its own dagger $f^\dag$ is called \emph{self-adjoint}. The following definition provides another example of this motto. 

\begin{definition}
  A \emph{dagger functor} is a functor $F\colon\cat{C}\to \cat{D}$ between dagger categories satisfying $F(f^\dag)=F(f)^\dag$. Denote the category of small dagger categories and dagger functors by \cat{DagCat}. 
\end{definition}

\begin{example} 
  Dagger functors embody various concrete transformations:
  \begin{itemize}
	\item Any functor between groupoids is a dagger functor. 
	\item A functor from a group(oid) to \cat{(F)Hilb} is a dagger functor precisely when it is a \emph{unitary representation}. 
	\item The inclusion $\cat{FHilb}\hookrightarrow \cat{Hilb}$ is a dagger functor. 
  \end{itemize}
\end{example}

There is no need to go further and define `dagger natural transformations': if $\sigma\colon F\to G$ is a natural transformation between dagger functors, then taking daggers componentwise defines a natural transformation $\sigma^\dag\colon G\to F$. Thus the category $[\cat{C},\cat{D}]$ of dagger functors $\cat{C}\to \cat{D}$ and natural transformations is itself a dagger category. 
This implies that \cat{DagCat} is a dagger 2-category, as in the following definition. Notice that products of (ordinary) categories actually provide the category $\cat{DagCat}$ with products, so that the following definition makes sense.

\begin{definition}\label{def:dagtwocat}
  A \emph{dagger 2-category} is a category enriched in \cat{DagCat}, and a \emph{dagger 2-functor} is a $\cat{DagCat}$-enriched functor.
\end{definition}

Another source of examples is given by free and cofree dagger categories~\cite[3.1.17,3.1.19]{heunen:thesis}. Write $\cat{Cat}$ for the category of small categories and functors.

\begin{proposition}\label{prop:cofree}
  The forgetful functor $\cat{DagCat} \to \cat{Cat}$ has a right adjoint $(-)_\leftrightarrows$,
  which sends a category $\cat{C}$ to the full subcategory of $\cat{C}\op \times \cat{C}$ of objects $(A,A)$, 
  and sends a functor $F$ to the restriction of $F\op \times F$.
\end{proposition}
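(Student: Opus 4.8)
The plan is to exhibit $(-)_\leftrightarrows$ as a right adjoint to the forgetful functor $U\colon\cat{DagCat}\to\cat{Cat}$ by constructing, for every dagger category $\cat{D}$ and every category $\cat{C}$, a bijection
\[
  \cat{Cat}(U\cat{D},\cat{C}) \;\cong\; \cat{DagCat}(\cat{D},\cat{C}_\leftrightarrows)
\]
natural in both arguments. Before doing so I would pin down the dagger on $\cat{C}_\leftrightarrows$. On the full product $\cat{C}\op\times\cat{C}$ the assignment $(A,B)\mapsto(B,A)$, $(f,g)\mapsto(g,f)$ is a contravariant involution, but it fails to be identity-on-objects; restricting to the diagonal objects $(A,A)$ is exactly what repairs this, since there the swap fixes every object and sends a morphism $(f,g)\colon(A,A)\to(B,B)$ — so $f\colon B\to A$ and $g\colon A\to B$ in $\cat{C}$ — to $(g,f)\colon(B,B)\to(A,A)$. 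This is manifestly involutive and identity-on-objects, so $\cat{C}_\leftrightarrows$ is a dagger category; moreover the restriction of $F\op\times F$ sends diagonal objects to diagonal objects and commutes with the swap, hence is a dagger functor, so $(-)_\leftrightarrows$ is a well-defined functor $\cat{Cat}\to\cat{DagCat}$.

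Next I would construct the bijection and its inverse. Given a dagger functor $H\colon\cat{D}\to\cat{C}_\leftrightarrows$, send it to the composite of its underlying functor with the covariant projection $p_2\colon\cat{C}_\leftrightarrows\to\cat{C}$, $(A,A)\mapsto A$, $(f,g)\mapsto g$. Conversely, given an ordinary functor $G\colon U\cat{D}\to\cat{C}$, define $\widehat{G}\colon\cat{D}\to\cat{C}_\leftrightarrows$ by $\widehat{G}(A)=(G(A),G(A))$ and, on a morphism $f\colon A\to B$, by $\widehat{G}(f)=(G(f^\dag),G(f))$. The two computations to carry out are that $\widehat{G}$ is functorial — using $(h\circ f)^\dag=f^\dag\circ h^\dag$ and functoriality of $G$ for the first coordinate, which composes in $\cat{C}\op$, and $\id^\dag=\id$ for the unit — and that it preserves daggers, since $\widehat{G}(f^\dag)=(G(f),G(f^\dag))=\widehat{G}(f)^\dag$ by $f^{\dag\dag}=f$ and the swap description of the dagger. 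This is the one place where the dagger of $\cat{D}$ is genuinely used: it supplies the ``backward'' first component that an ordinary functor into $\cat{C}$ leaves unspecified.

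I would then verify the two assignments are mutually inverse. Starting from $G$, the composite $p_2\circ\widehat{G}$ merely reads off the second coordinate and returns $G$ on the nose. Starting from a dagger functor $H$, each $H(A)$ is already diagonal, say $(X,X)$, so $p_2H(A)=X$ recovers the object, while dagger-preservation $H(f^\dag)=H(f)^\dag$ forces the first coordinate of $H(f)$ to equal the second coordinate of $H(f^\dag)$, namely $(p_2\circ H)(f^\dag)$; hence $\widehat{p_2\circ H}=H$. Naturality in $\cat{C}$ (postcomposition with the restriction of $F\op\times F$) and in $\cat{D}$ (precomposition with a dagger functor) is then a routine diagram chase, both sides being given by coordinatewise application of the relevant functors. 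I expect no serious obstacle here; the only real subtlety is the bookkeeping of variance in the first, $\cat{C}\op$-valued coordinate, and once the swap description of the dagger on $\cat{C}_\leftrightarrows$ is fixed, the correct definition of $\widehat{G}$ and everything that follows is essentially forced.
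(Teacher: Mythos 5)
Your proposal is correct and follows essentially the same route as the paper's proof: the dagger on $\cat{C}_\leftrightarrows$ is the swap $(f,g)^\dag=(g,f)$, and the adjunction bijection is exactly the paper's correspondence $\widehat{G}(f)=(G(f^\dag),G(f))$ in one direction and second-coordinate projection in the other. You have simply written out in full the functoriality, dagger-preservation, and mutual-inverse checks that the paper leaves implicit.
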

\begin{proof}
  The dagger on $\cat{C}_\leftrightarrows$ is given by $(f,g)^\dag=(g,f)$; this also makes $F_\leftrightarrows$ into a dagger functor. Dagger functors $F \colon \cat{D} \to \cat{C}_\leftrightarrows$ correspond naturally to functors $G \colon \cat{D} \to \cat{C}$ via $Ff=(Gf^\dag,Gf)$, and $Gf=Fh$ when $Ff=(g,h)$.
\end{proof}

We end this section with a useful folklore result.

\begin{lemma}\label{lem:daglem}
  If $F\colon \cat{C}\to \cat{D}$ is a full and faithful functor, then any dagger on $\cat{D}$ induces a unique dagger on $\cat{C}$ such that $F$ is a dagger functor.
\end{lemma}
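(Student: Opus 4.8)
The plan is to transport the dagger from $\cat{D}$ to $\cat{C}$ along the hom-set bijections supplied by full faithfulness. Since a dagger is required to fix objects, I only need to define it on morphisms. Given $f\colon A\to B$ in $\cat{C}$, the morphism $(Ff)^\dag\colon FB\to FA$ lives in $\cat{D}$; because $F$ is full it has a preimage in $\cat{C}(B,A)$, and because $F$ is faithful this preimage is unique. I would define $f^\dag$ to be that unique morphism, so that by construction $F(f^\dag)=(Ff)^\dag$. This single equation simultaneously witnesses that $F$ is a dagger functor and that $f^\dag$ has the correct domain and codomain $B\to A$.

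Next I would check the dagger axioms, each of which reduces to a computation in $\cat{D}$ that is then cancelled by faithfulness of $F$. For contravariant functoriality, apply $F$ to both sides: $F((gf)^\dag)=(F(gf))^\dag=(Ff)^\dag(Fg)^\dag=F(f^\dag)F(g^\dag)=F(f^\dag g^\dag)$, so faithfulness gives $(gf)^\dag=f^\dag g^\dag$; the identity law $(\id[A])^\dag=\id[A]$ follows the same way from $(\id[FA])^\dag=\id[FA]$. For involutivity, $F(f^{\dag\dag})=((Ff)^\dag)^\dag=Ff$ using the dagger on $\cat{D}$, whence faithfulness yields $f^{\dag\dag}=f$. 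Thus $\dag$ is a genuine dagger on $\cat{C}$ and $F$ preserves it.

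Finally, uniqueness is immediate: any dagger on $\cat{C}$ for which $F$ is a dagger functor must satisfy $F(f^\dag)=(Ff)^\dag$ for every $f$, and since $F$ is faithful this condition pins down $f^\dag$ completely. Hence the dagger constructed above is the only one that makes $F$ a dagger functor.

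I do not expect a serious obstacle: this is the standard transport-of-structure argument along a full and faithful functor, where fullness supplies existence of $f^\dag$ and faithfulness supplies both its uniqueness and the cancellation needed to verify the axioms. The only point requiring care is to observe that each axiom of a dagger — contravariance, involutivity, and being identity-on-objects — is the image under the hom-set bijection of the corresponding axiom already holding in $\cat{D}$, so nothing beyond routine diagram-chasing is needed.
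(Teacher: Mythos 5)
Your proposal is correct and follows essentially the same route as the paper's proof: transporting the dagger along the hom-set bijections, with fullness giving existence of $f^\dag$, faithfulness giving its uniqueness (hence uniqueness of the dagger), and the axioms verified by applying $F$ and cancelling. You simply spell out the functoriality and involution checks that the paper leaves implicit.
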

\begin{proof}
  If $f$ is a morphism in $\cat C$, fullness gives a morphism $f^\dag$ satisfying $F(f)^\dag = F(f^\dag)$, which is unique by faithfulness. This uniqueness also gives $f^{\dag\dag} = f$.
\end{proof}

\section{Graphical calculus}\label{sec:pictures} 

Many proofs in the rest of this paper are most easily presented in graphical form. This section briefly overviews the graphical calculus that governs monoidal (dagger) categories, such as the category $[\cat C, \cat C]$ where our monads will live. For more information, see~\cite{selinger:graphicallanguages}.

\begin{definition}
  A \emph{(symmetric) monoidal dagger category} is a dagger category that is also a (symmetric) monoidal category, satisfying $(f\otimes g)^\dag=f^\dag\otimes g^\dag$ for all morphisms $f$ and $g$, whose coherence maps $\lambda \colon I \otimes A \to A$, $\rho \colon A \otimes I \to A$, and $\alpha \colon (A \otimes B) \otimes C \to A \otimes (B \otimes C)$ (and $\sigma \colon A \otimes B \to B \otimes A$) are unitary. 
\end{definition}

\begin{example}
  Many monoidal structures on dagger categories make them monoidal dagger categories:
  \begin{itemize}
	\item The dagger category $\cat{Rel}$ is a monoidal dagger category under cartesian product.
	\item The dagger category $\cat{(F)Hilb}$ is a monoidal dagger category under tensor product.
	\item For any dagger category \cat{C}, the dagger category $[\cat C, \cat C]$ of dagger functors $\cat C \to \cat C$ is a monoidal dagger category under composition of functors.
	\item Any monoidal groupoid is a monoidal dagger category under $f^\dag = f^{-1}$.
  \end{itemize}
\end{example}

There is a sound and complete graphical calculus for such categories, that represents a morphism $f \colon A \to B$ as 
$\setlength\morphismheight{3mm}\begin{pic}
  \node[morphism,font=\tiny] (f) at (0,0) {$f$};
  \draw (f.south) to +(0,-.1);
  \draw (f.north) to +(0,.1);
\end{pic}$,
and composition, tensor product, and dagger as follows.
\[
  \begin{pic}
    \node[morphism] (f) {$g \circ f$};
    \draw (f.south) to +(0,-.65) node[right] {$A$};
    \draw (f.north) to +(0,.65) node[right] {$C$};
  \end{pic}
  = 
  \begin{pic}
    \node[morphism] (g) at (0,.75) {$g\vphantom{f}$};
    \node[morphism] (f) at (0,0) {$f$};
    \draw (f.south) to +(0,-.3) node[right] {$A$};
    \draw (g.south) to node[right] {$B$} (f.north);
    \draw (g.north) to +(0,.3) node[right] {$C$};
  \end{pic}
  \qquad\qquad
  \begin{pic}
    \node[morphism] (f) {$f \otimes g$};
    \draw (f.south) to +(0,-.65) node[right] {$A \otimes C$};
    \draw (f.north) to +(0,.65) node[right] {$B \otimes D$};
  \end{pic}
  = 
  \begin{pic}
    \node[morphism] (f) at (-.4,0) {$f$};
    \node[morphism] (g) at (.4,0) {$g\vphantom{f}$};
    \draw (f.south) to +(0,-.65) node[right] {$A$};
    \draw (f.north) to +(0,.65) node[right] {$B$};
    \draw (g.south) to +(0,-.65) node[right] {$C$};
    \draw (g.north) to +(0,.65) node[right] {$D$};
  \end{pic}
  \qquad\qquad
  \begin{pic}
    \node[morphism] (f) {$f^\dag$};
    \draw (f.south) to +(0,-.65) node[right] {$B$};
    \draw (f.north) to +(0,.65) node[right] {$A$};
  \end{pic}
  =
  \begin{pic}
    \node[morphism,hflip] (f) {$f$};
    \draw (f.south) to +(0,-.65) node[right] {$B$};
    \draw (f.north) to +(0,.65) node[right] {$A$};
  \end{pic}
\]
Distinguished morphisms are often depicted with special diagrams instead of generic boxes. For example, 
the identity $A \to A$ and the swap map of symmetric monoidal dagger categories are drawn as:
\[\begin{pic}
    \draw (0,0) node[right] {$A$} to (0,1) node[right] {$A$};
  \end{pic}
  \qquad\qquad\qquad
  \begin{pic}
    \draw (0,0) node[left] {$A$} to[out=80,in=-100] (1,1) node[right] {$A$};
    \draw (1,0) node[right] {$B$} to[out=100,in=-80] (0,1) node[left] {$B$};
  \end{pic}
\] 
whereas the (identity on) the monoidal unit object $I$ is drawn as the empty picture: 
\[ 
  \;
\]
  The following definition gives another example: the unit and multiplication of a monoid get a special diagram.

\begin{definition}
  A \emph{monoid} in a monoidal category is an object $A$ with morphisms $\tinymult \colon A \otimes A \to A$ and $\tinyunit \colon I \to A$, satisfying the following equations.
  \[
    \begin{pic}[scale=.4]
      \node[dot] (t) at (0,1) {};
      \node[dot] (b) at (1,0) {};
      \draw (t) to +(0,1);
      \draw (t) to[out=0,in=90] (b);
      \draw (t) to[out=180,in=90] (-1,0) to (-1,-1);
      \draw (b) to[out=180,in=90] (0,-1);
      \draw (b) to[out=0,in=90] (2,-1);
    \end{pic}
    =
    \begin{pic}[yscale=.4,xscale=-.4]
      \node[dot] (t) at (0,1) {};
      \node[dot] (b) at (1,0) {};
      \draw (t) to +(0,1);
      \draw (t) to[out=0,in=90] (b);
      \draw (t) to[out=180,in=90] (-1,0) to (-1,-1);
      \draw (b) to[out=180,in=90] (0,-1);
      \draw (b) to[out=0,in=90] (2,-1);
    \end{pic}
  \qquad\qquad
  \begin{pic}[scale=.4]
    \node[dot] (d) {};
    \draw (d) to +(0,1);
    \draw (d) to[out=0,in=90] +(1,-1) to +(0,-1);
    \draw (d) to[out=180,in=90] +(-1,-1) node[dot] {};
  \end{pic}
  =
  \begin{pic}[scale=.4]
    \draw (0,0) to (0,3);
  \end{pic}
  =
  \begin{pic}[yscale=.4,xscale=-.4]
    \node[dot] (d) {};
    \draw (d) to +(0,1);
    \draw (d) to[out=0,in=90] +(1,-1) to +(0,-1);
    \draw (d) to[out=180,in=90] +(-1,-1) node[dot] {};
  \end{pic}
  \]
  A \emph{comonoid} in a monoidal category is a monoid in the opposite category; an object $A$ with morphisms $\tinycomult \colon A \to A \otimes A$ and $\tinycounit \colon A \to I$ satisfying the duals of the above equations.
  A monoid in a symmetric monoidal category is \emph{commutative} if it satisfies:
  \[\begin{pic}[scale=.4]
      \node[dot] (d) at (0,0) {};
      \draw (d.north) to +(0,1);
      \draw (d.west) to[out=180,in=90] +(-.75,-1) to +(0,-1);
      \draw (d.east) to[out=0,in=90] +(.75,-1) to +(0,-1);
    \end{pic}
    \quad = \quad
    \begin{pic}[scale=.4]
      \node[dot] (d) at (0,0) {};
      \draw (d.north) to +(0,1);
      \draw (d.west) to[out=180,in=90] +(-.75,-.5) to[out=-90,in=90] +(2,-1.5);
      \draw (d.east) to[out=0,in=90] +(.75,-.5) to[out=-90,in=90] +(-2,-1.5);
    \end{pic}
  \]
  A monoid in a monoidal dagger category is a \emph{dagger Frobenius monoid} if it satisfies the following \emph{Frobenius law}.
  \begin{equation}\label{eq:frobeniuslaw}
  \begin{pic}
   \draw (0,0) to (0,1) to[out=90,in=180] (.5,1.5) to (.5,2);
   \draw (.5,1.5) to[out=0,in=90] (1,1) to[out=-90,in=180] (1.5,.5) to (1.5,0);
   \draw (1.5,.5) to[out=0,in=-90] (2,1) to (2,2);
   \node[dot] at (.5,1.5) {};
   \node[dot] at (1.5,.5) {};
  \end{pic}
  \quad = \quad
  \begin{pic}[xscale=-1]
   \draw (0,0) to (0,1) to[out=90,in=180] (.5,1.5) to (.5,2);
   \draw (.5,1.5) to[out=0,in=90] (1,1) to[out=-90,in=180] (1.5,.5) to (1.5,0);
   \draw (1.5,.5) to[out=0,in=-90] (2,1) to (2,2);
   \node[dot] at (.5,1.5) {};
   \node[dot] at (1.5,.5) {};
  \end{pic}
 \end{equation}
\end{definition}

The Frobenius law might look mysterious, but will turn out to be precisely the right property to make monads respect daggers. 
Section~\ref{sec:coherence} below will formally justify it as a coherence property between closure and the dagger. 
For now we illustrate that the Frobenius law corresponds to natural mathematical structures in example categories.

\begin{example}\label{ex:groupoidFrob} 
  See~\cite{heunencontrerascattaneo:groupoids,heunentull:groupoids,vicary:quantumalgebras} for more information on the following examples.
  \begin{itemize} 
    \item Let $\cat{G}$ be a small groupoid, and $G$ its set of objects. The assignments 
    \begin{align*}
      \{*\}&\mapsto \{\id[A]\mid A\in G\} 
      &
      (f,g)&\mapsto 
      \begin{cases} 
        \{f\circ g\} &\text{ if }f\circ g\text{ is defined} \\ 
        \emptyset &\text{ otherwise} 
      \end{cases} 
      \\
    \intertext{define a dagger Frobenius monoid in $\cat{Rel}$ on the set of morphisms of $\cat{G}$.
    Conversely, any dagger Frobenius monoid in $\cat{Rel}$ is of this form.
    \item Let $\cat{G}$ be a finite groupoid, and $G$ its set of objects. The assignments}
      1 &\mapsto \sum_{A\in G}\id[A] 
      &
      f\otimes g & \mapsto 
      \begin{cases} 
        f\circ g &\text{ if }f\circ g\text{ is defined} \\ 
        0 &\text{ otherwise} 
      \end{cases} 
    \end{align*}
    define a dagger Frobenius monoid in \cat{(F)Hilb} on the Hilbert space of which the morphisms of $\cat{G}$ form an orthonormal basis.
    Conversely, any dagger Frobenius monoid in $\cat{(F)Hilb}$ is of this form.
  \end{itemize}
\end{example}

The following lemma exemplifies graphical reasoning. Recall that a (co)monoid homomorphism is a morphism $f$ between (co)monoids satisfying $f \circ \tinyunit = \tinyunit$ and $f \circ \tinymult = \tinymult \circ (f \otimes f)$ ($\tinycounit \circ f = \tinycounit$ and $\tinycomult \circ f = (f \otimes f) \circ \tinycomult$).

\begin{lemma}\label{lem:strictmorphismsareiso}
  A monoid homomorphism between dagger Frobenius monoids in a monoidal dagger category, that is also a comonoid homomorphism, is an isomorphism.
\end{lemma}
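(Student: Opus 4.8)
The plan is to exploit the fact that every dagger Frobenius monoid is self-dual, and then realise the sought inverse as a \emph{mate} of $f$ under this self-duality. Write $(\mu_A,\eta_A)$ and $(\mu_B,\eta_B)$ for the two monoid structures, so that the induced comonoid structures are $(\mu_A^\dag,\eta_A^\dag)$ and $(\mu_B^\dag,\eta_B^\dag)$. First I would record that the ``cup'' $\mu_A^\dag\circ\eta_A\colon I\to A\otimes A$ and the ``cap'' $\eta_A^\dag\circ\mu_A\colon A\otimes A\to I$ satisfy the snake equations: expanding $(\eta_A^\dag\circ\mu_A\otimes\id[A])\circ(\id[A]\otimes\mu_A^\dag\circ\eta_A)$ and applying the Frobenius law~\eqref{eq:frobeniuslaw} turns it into $(\eta_A^\dag\otimes\id[A])\circ\mu_A^\dag\circ\mu_A\circ(\id[A]\otimes\eta_A)$, which collapses to $\id[A]$ by the unit and counit laws; the same holds for $B$. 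Thus both $A$ and $B$ are self-dual.

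Next I would build a left inverse. Using the self-dualities, define the mate $f^{*}\colon B\to A$ by $f^{*}=(\id[A]\otimes(\eta_B^\dag\circ\mu_B))\circ(\id[A]\otimes f\otimes\id[B])\circ((\mu_A^\dag\circ\eta_A)\otimes\id[B])$, and compute $f^{*}\circ f$. Sliding $f$ onto the cup and then using multiplicativity $\mu_B\circ(f\otimes f)=f\circ\mu_A$ pulls $f$ through the multiplication node, after which counit preservation $\eta_B^\dag\circ f=\eta_A^\dag$ replaces the cap of $B$ by the cap of $A$. What remains is precisely the $A$-snake $(\id[A]\otimes(\eta_A^\dag\circ\mu_A))\circ((\mu_A^\dag\circ\eta_A)\otimes\id[A])=\id[A]$, so $f^{*}\circ f=\id[A]$. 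Note that this step consumes only that $f$ is a monoid homomorphism together with the counit half of being a comonoid homomorphism.

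For the right inverse I would invoke the dagger rather than repeat a dual diagram chase. Taking daggers interchanges the two homomorphism conditions: from $f\circ\mu_A=\mu_B\circ(f\otimes f)$ and $f\circ\eta_A=\eta_B$ one gets $\mu_A^\dag\circ f^\dag=(f^\dag\otimes f^\dag)\circ\mu_B^\dag$ and $\eta_A^\dag\circ f^\dag=\eta_B^\dag$, so $f^\dag\colon B\to A$ is a comonoid homomorphism; dually the comonoid-homomorphism equations for $f$ make $f^\dag$ a monoid homomorphism. Hence $f^\dag$ is again simultaneously a monoid and comonoid homomorphism, and the previous paragraph applies to it, giving $(f^\dag)^{*}\circ f^\dag=\id[B]$. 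Daggering this identity yields $f\circ\big((f^\dag)^{*}\big)^\dag=\id[B]$, a right inverse for $f$. A morphism possessing both a left and a right inverse is invertible with the two coinciding, so $f$ is an isomorphism.

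I expect the main obstacle to be the self-duality bookkeeping in the mate computation --- keeping the cup, the cap, and the factor on which $f$ acts in the correct tensor positions --- and, more conceptually, spotting that multiplicativity plus counit preservation already force a one-sided inverse, so that the dagger can supply the other side for free. Carrying out the snake manipulations in the graphical calculus of Section~\ref{sec:pictures}, rather than symbolically, should make these rearrangements routine and remove any lingering associativity or coherence clutter.
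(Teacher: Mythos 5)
Your proposal is correct and takes essentially the same route as the paper: the paper's inverse is exactly the mate of $f$ under the Frobenius self-dualities with cup $\mu^\dag\circ\eta$ and cap $\eta^\dag\circ\mu$, verified by sliding $f$ through the cup, applying the homomorphism equations, and finishing with the snake identity, which follows from the Frobenius law \emph{together with} the unit law (this combination is Lemma~\ref{lem:extendedfrobeniuslaw}, so your attribution of the step $(\mu_A\otimes\id[A])\circ(\id[A]\otimes\mu_A^\dag)=\mu_A^\dag\circ\mu_A$ to the bare Frobenius law is slightly imprecise, and note that the snake you actually use in computing $f^{*}\circ f$ is the mirrored one, $(\id[A]\otimes(\eta_A^\dag\circ\mu_A))\circ((\mu_A^\dag\circ\eta_A)\otimes\id[A])=\id[A]$, rather than the one you derived, though it holds by the same reasoning). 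The only genuine difference is your handling of the second composite: where the paper runs the mirror-image diagram chase (``a similar argument'', consuming the other two homomorphism conditions), you apply the left-inverse computation to $f^\dag$ --- using that the dagger interchanges the monoid- and comonoid-homomorphism conditions --- and dagger the result to get a right inverse, which is a clean, fully explicit substitute for the paper's appeal to symmetry.
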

\begin{proof}
  Construct an inverse to $A \sxto{f} B$ as follows:
  \[\begin{pic}[xscale=.75,yscale=.5]
      \node (f) [morphism] at (0,0) {$f$};
      \draw (f.north)
      to [out=up, in=right] +(-0.5,0.5) node (d) [dot] {}
      to [out=left, in=up] +(-0.5,-0.5)
      to [out=down, in=up] +(0,-1.8) node [below] {$B$};
      \draw (d.north) to +(0,0.4) node [dot] {};
      \draw (f.south)
      to [out=down, in=left] +(0.5,-0.5) node (d) [dot] {}
      to [out=right, in=down] +(0.5,0.5)
      to [out=up, in=down] +(0,1.8) node [above] {$A$};
      \draw (d.south) to +(0,-0.4) node [dot] {};
    \end{pic}
  \]
  The composite with $f$ gives the identity in one direction:
  \[
    \begin{pic}[xscale=.75,yscale=.5]
      \node (f) [morphism] at (0,0) {$f$};
      \draw (f.north)
      to [out=up, in=right] +(-0.5,0.5) node (d) [dot] {}
      to [out=left, in=up] +(-0.5,-0.5)
      to [out=down, in=up] +(0,-1.8) node [below] {$B$};
      \draw (d.north) to +(0,0.5) node [dot] {};
      \draw (f.south)
      to [out=down, in=left] +(0.5,-0.5) node (d) [dot] {}
      to [out=right, in=down] +(0.5,0.5)
      to [out=down, in=down] +(0,0.8)
      node (f2) [morphism, anchor=south, width=0.2cm]
      {$f$};
      \draw (d.south) to +(0,-0.5) node [dot] {};
      \draw (f2.north) to +(0,0.5) node [above] {$B$};
    \end{pic}
    =
    \begin{pic}[xscale=.75,yscale=.5]
      \draw (0,0)
      to [out=up, in=right] +(-0.5,0.5) node (d) [dot] {}
      to [out=left, in=up] +(-0.5,-0.5)
      to [out=down, in=up] +(0,-1.8) node [below] {$B$};
      \draw (d.north) to +(0,0.5) node [dot] {};
      \node (f) [morphism] at (0.5,-1.3) {$f$};
      \draw (0,0)
      to [out=down, in=left] +(0.5,-0.5) node (e) [dot] {}
      to [out=right, in=down] +(0.5,0.5)
      to [out=up, in=down] +(0,1.3) node [above] {$B$};
      \draw (f.north) to (e.south);
      \draw (f.south) to +(0,-0.4) node [dot] {};
    \end{pic}
    =
    \begin{pic}[xscale=.75,yscale=.5]
      \draw (0,0)
      to [out=up, in=right] +(-0.5,0.5) node (d) [dot] {}
      to [out=left, in=up] +(-0.5,-0.5)
      to [out=down, in=up] +(0,-1.5) node [below] {$B$};
      \draw (d.north) to +(0,0.5) node [dot] {};
      \draw (0.5,-0.5) to +(0,-0.5) node [dot] {};
      \draw (0,0)
      to [out=down, in=left] +(0.5,-0.5) node (d) [dot] {}
      to [out=right, in=down] +(0.5,0.5)
      to [out=up, in=down] +(0,1.6) node [above] {$B$};
    \end{pic}
    =
    \begin{pic}[yscale=.5]
      \draw (0,0) node [below] {$B$} to +(0,3.1) node [above] {$B$};
    \end{pic}
  \]
  The third equality uses the Frobenius law~\eqref{eq:frobeniuslaw} and the unit law.
  The other composite is the identity by a similar argument.
\end{proof}

\section{Dagger adjunctions}\label{sec:adjunctions}

This section considers adjunctions that respect daggers.

\begin{definition} 
  A \emph{dagger adjunction} is an adjunction between dagger categories where both functors are dagger functors.
\end{definition}

Note that the previous definition did not need to specify left and right adjoints, because the dagger makes the adjunction go both ways. If $F\colon \cat C \to \cat D$ and $G \colon \cat D \to \cat C$ are dagger adjoints, say $F \dashv G$ with natural bijection $\theta \colon \cat{D}(FA,B) \to \cat{C}(A,GB)$, then $f \mapsto \theta(f^\dag)^\dag$ is a natural bijection $\cat{D}(A,FB) \to \cat{C}(GA,B)$, 
whence also $G \dashv F$.

For example, a dagger category $\cat{C}$ has a zero object if and only if the unique dagger functor $\cat{C}\to\cat{1}$ has a dagger adjoint. Here, a \emph{zero object} is one that is both initial and terminal, and hence induces zero maps between any two objects. This is the nullary version of the following example: a product $\smash{A \stackrel{p_A}{\longleftarrow} A \times B \stackrel{p_B}{\longrightarrow} B}$ is a \emph{dagger biproduct} when $p_A \circ p_A^\dag = \id$, $p_A \circ p_B^\dag = 0$, $p_B \circ p_A^\dag = 0$, $p_B \circ p_B^\dag = \id$.

\begin{example}\label{ex:dagbiprods}
  A dagger category $\cat{C}$ with zero object has binary dagger biproducts if and only if the diagonal functor $\cat{C} \to \cat{C} \times \cat{C}$ has a dagger adjoint $(-)\oplus (-)$ with dagger epic counit $(p_A,p_B)\colon (A\oplus B,A\oplus B)\to (A,B)$, \textit{i.e.}\ $(p_A \circ p_A^\dag,p_B\circ p_B^\dag)=(\id[A],\id[B])$ for $A,B \in \cat{C}$.
\end{example}
\begin{proof}
  The implication from left to right is routine. For the other direction, a right adjoint to the diagonal is well-known to fix binary products~\cite[V.5]{maclane:categories}. 
  If it additionally preserves daggers the product is also a coproduct, so it remains to check that the required equations governing $p_A$ and $p_B$ are satisfied. By naturality, the diagram
  \[\begin{tikzpicture}[xscale=3.5,yscale=1.5]
    \node (tl) at (0,1) {$A$};
    \node (t) at (1,1) {$A \oplus B$};
    \node (tr) at (2,1) {$B$};
    \node (bl) at (0,0) {$A$};
    \node (b) at (1,0) {$A \oplus B$};
    \node (br) at (2,0) {$B$};
    \draw[->] (tl) to node[above] {$p_A^\dag$} (t);
    \draw[->] (t) to node[above] {$p_B$} (tr);
    \draw[->] (bl) to node[below] {$p_A^\dag$} (b);
    \draw[->] (b) to node[below] {$p_B$} (br);
    \draw[->] (tl) to node[left] {$\id$} (bl);
    \draw[->] (tr) to node[right] {$0$} (br);
    \draw[->] (t) to node[right] {$\id\oplus 0$} (b);
  \end{tikzpicture}\]
  commutes, so that $p_B \circ p_A^\dag = 0$. By symmetry $p_A \circ p_B^\dag = 0$, and the remaining equations hold by assumption.
\end{proof}

Here is a more involved example of a dagger adjunction.

\begin{example}\label{ex:imdagadj}
  The monoids $(\mathbb{N},+)$ and $(\mathbb{Z},+)$ become one-object dagger categories under the trivial dagger $k\mapsto k$. The inclusion $\mathbb{N}\hookrightarrow \mathbb{Z}$ is a dagger functor. It induces a dagger functor $F\colon [\mathbb{Z},\cat{FHilb}]\to [\mathbb{N},\cat{FHilb}]$, which has a dagger adjoint $G$. 
\end{example}
\begin{proof}
  An object of $[\mathbb{Z},\cat{FHilb}]$ is a self-adjoint isomorphism $T\colon A\to A$ on a finite-dimensional Hilbert space $A$, whereas an object of $[\mathbb{N},\cat{FHilb}]$ is a just a self-adjoint morphism $T \colon A \to A$ in $\cat{FHilb}$.
  To define $G$ on objects, notice that a self-adjoint morphism $T \colon A \to A$ restricts to a self-adjoint surjection from $\ker(T)^\perp = \overline{\Ima T}$ to itself, and by finite-dimensionality of $A$ hence to a self-adjoint isomorphism $G(T)$ on $\Ima T$.

  On a morphism $f \colon T \to S$ in $[\mathbb{N},\cat{FHilb}]$, define $Gf$ to be the restriction of $f$ to $\Ima T$. To see this is well-defined, \ie the right diagram below commutes if the left one does,
  \[
    \begin{aligned}\begin{tikzpicture}
     \matrix (m) [matrix of math nodes,row sep=2em,column sep=4em,minimum width=2em]
     {A & B \\
      A & B \\};
     \path[->]
     (m-1-1) edge node [left] {$T$} (m-2-1)
             edge node [above] {$f$} (m-1-2)
     (m-2-1) edge node [below] {$f$} (m-2-2)
     (m-1-2) edge node [right] {$S$} (m-2-2);
    \end{tikzpicture}\end{aligned}
    \qquad \implies \qquad
    \begin{aligned}\begin{tikzpicture}
     \matrix (m) [matrix of math nodes,row sep=2em,column sep=4em,minimum width=2em]
     {\Ima  T & \Ima S \\
      \Ima T & \Ima S \\};
     \path[->]
     (m-1-1) edge node [left] {$T$} (m-2-1)
             edge node [above] {$G(f)$} (m-1-2)
     (m-2-1) edge node [below] {$G(f)$} (m-2-2)
     (m-1-2) edge node [right] {$S$} (m-2-2);
    \end{tikzpicture}\end{aligned}
  \] 
  observe that if $b\in\Ima T$ then $b=T(a)$ for some $b\in H$, so that $f(b)=fT(a)=Sf(a)$, and hence $f(b)\in \Ima S$. 
  This definition of $G$ is easily seen to be dagger functorial.

  To prove that $F$ and $G$ are dagger adjoint, it suffices to define a natural transformation $\eta \colon \id \to F \circ G$, because $G \circ F$ is just the identity. Define $\eta_T$ to be the projection $A \to \Ima T$, which is a well-defined morphism in $[\mathbb{N},\cat{FHilb}]$:
  \[
    \begin{tikzpicture}
     \matrix (m) [matrix of math nodes,row sep=2em,column sep=4em,minimum width=2em]
     {A & \Ima T \\
      A & \Ima T\rlap{.} \\};
     \path[->]
     (m-1-1) edge node [left] {$T$} (m-2-1)
             edge node [above] {$\eta_T$} (m-1-2)
     (m-2-1) edge node [below] {$\eta_T$} (m-2-2)
     (m-1-2) edge node [right] {$T$} (m-2-2);
     \end{tikzpicture}
  \] 
  Naturality of $\eta$ boils down to commutativity of 
  \[
  \begin{tikzpicture}
     \matrix (m) [matrix of math nodes,row sep=2em,column sep=4em,minimum width=2em]
     {
      A & \Ima T & \\
      B & \Ima S \\};
     \path[->]
     (m-1-1) edge node [left] {$f$} (m-2-1)
             edge node [above] {$\eta_T$} (m-1-2)
     (m-2-1) edge node [below] {$\eta_S$} (m-2-2)
     (m-1-2) edge node [right] {$f$} (m-2-2);
   \end{tikzpicture}
   \] 
   which is easy to verify.
\end{proof}

There are variations on the previous example. For example, $(n,m)\mapsto (m,n)$ induces daggers on $\mathbb{N}\times\mathbb{N}$ and $\mathbb{Z}\times\mathbb{Z}$. A dagger functor $\mathbb{N}\times\mathbb{N}\to \cat{FHilb}$ corresponds to a choice of a normal map, which again restricts to a normal isomorphism on its image. This defines a dagger adjoint to the inclusion $[\mathbb{N}\times\mathbb{N},\cat{FHilb}]\to [\mathbb{Z}\times\mathbb{Z},\cat{FHilb}]$. 

Recall that $F \colon \cat{C} \to \cat{D}$ is a \emph{Frobenius functor} when it has a left adjoint $G$ that is simultaneously right adjoint. This is also called an \emph{ambidextrous adjunction}~\cite{lauda:ambidextrous}.

\begin{proposition} 
  If $F$ is a Frobenius functor with adjoint $G$, then $F_\leftrightarrows$ and $G_\leftrightarrows$ as in Proposition~\ref{prop:cofree} are dagger adjoint.
\end{proposition}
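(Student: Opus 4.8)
The plan is to produce an ordinary adjunction $F_\leftrightarrows \dashv G_\leftrightarrows$ and then invoke the fact that both functors are dagger functors by Proposition~\ref{prop:cofree}, so that the adjunction is automatically a dagger adjunction. The crucial input is that a Frobenius functor is ambidextrous: we have \emph{both} $F \dashv G$ and $G \dashv F$. I would begin by recalling that $F_\leftrightarrows$ is nothing but the restriction of $F\op \times F \colon \cat{C}\op \times \cat{C} \to \cat{D}\op \times \cat{D}$ to the diagonal full subcategories, and likewise $G_\leftrightarrows$ is the restriction of $G\op \times G$. Hence it suffices to build an adjunction $F\op \times F \dashv G\op \times G$ and check that it descends to the diagonals.

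Next I would assemble this product adjunction from the two halves of the ambidextrous adjunction, one half per factor. On the second factor, $F \dashv G$ directly gives $\cat{C}(C, GX) \cong \cat{D}(FC, X)$. On the first factor, passing to opposite categories turns $G \dashv F$ into $F\op \dashv G\op$: indeed $\cat{D}\op(F\op C, X) = \cat{D}(X, FC) \cong \cat{C}(GX, C) = \cat{C}\op(C, G\op X)$, where the middle bijection is precisely $G \dashv F$. Since a product of adjunctions is again an adjunction, we obtain $F\op \times F \dashv G\op \times G$.

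Finally I would restrict to the diagonals. Because $F$ and $G$ preserve diagonal objects, so that $F_\leftrightarrows(A,A) = (FA,FA)$ and $G_\leftrightarrows(X,X) = (GX,GX)$, and because $\cat{C}_\leftrightarrows$ and $\cat{D}_\leftrightarrows$ are full subcategories, the hom-set bijection
\[
  \cat{D}(X,FA) \times \cat{D}(FA,X) \;\cong\; \cat{C}(GX,A) \times \cat{C}(A,GX)
\]
obtained by combining the two halves restricts verbatim to the diagonal objects, and its naturality in $\cat{C}_\leftrightarrows$ and $\cat{D}_\leftrightarrows$ is inherited from naturality in the ambient product categories. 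This yields $F_\leftrightarrows \dashv G_\leftrightarrows$, and since both are dagger functors this is a dagger adjunction, which by the remark following the definition of dagger adjunction goes both ways.

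The main obstacle is the variance bookkeeping: one must keep track of the fact that it is the \emph{reversed} direction $G \dashv F$ that produces the adjunction $F\op \dashv G\op$ on the opposite factor, so that both halves of the Frobenius condition are genuinely used and a single adjunction would not suffice. Once the two factors are correctly matched, the remaining verifications are routine bookkeeping with the standard adjunction-on-a-product and restriction-to-full-subcategory arguments.
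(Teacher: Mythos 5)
Your proof is correct and takes essentially the same approach as the paper: both exploit the ambidexterity $G \dashv F \dashv G$ once per factor, producing exactly the natural bijection $\cat{C}(GB,A) \times \cat{C}(A,GB) \cong \cat{D}(B,FA) \times \cat{D}(FA,B)$ that the paper writes down directly on diagonal objects, and then both conclude by noting the functors are dagger functors by Proposition~\ref{prop:cofree}. Your extra packaging---first forming the product adjunction $F\op \times F \dashv G\op \times G$ and then restricting along the full diagonal subcategories---is just a slightly more verbose route to the identical hom-set computation.
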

\begin{proof}
  If $F \colon \cat C \to \cat D$ there is a natural bijection
  \begin{align*}
    \cat{C}_\leftrightarrows\big( (A,A) , G_\leftrightarrows(B,B) \big)
    & = \cat{C}(GB,A) \times \cat{C}(A,GB) \\
    & \cong \cat{D}(B,FA) \times \cat{C}(FA,B) \\
    & = \cat{D}_\leftrightarrows\big( F_\leftrightarrows(A,A), (B,B) \big)
  \end{align*}
  because $G \dashv F \dashv G$.
\end{proof}

\section{Frobenius monads}\label{sec:monads}

We now come to our central notion: the \emph{Frobenius law} for monads. It is the dagger version of a similar notion in~\cite{street:ambidextrous}. The monads of~\cite{street:ambidextrous} correspond to ambijuctions, whereas our monads correspond to dagger adjunctions.

\begin{definition}
  A \emph{dagger Frobenius monad} on a dagger category $\cat{C}$ is a dagger Frobenius monoid in $[\cat{C},\cat{C}]$;
  explicitly, a monad $(T,\mu,\eta)$ on $\cat{C}$ with $T(f^\dag)=T(f)^\dag$ and 
  \begin{equation}\label{eq:frobeniusformonads}
    T(\mu_A) \circ \mu^\dag_{T(A)} = \mu_{T(A)} \circ T(\mu_A^\dag).
  \end{equation}
\end{definition}

The following family is our main source of examples of dagger Frobenius monads.
We will see in Example~\ref{ex:measurement} below that it includes quantum measurement.

\begin{example}\label{example:tensor}
   A monoid $(B,\tinymult,\tinyunit)$ in a monoidal dagger category $\cat{C}$ is a dagger Frobenius monoid if and only if the monad $- \otimes B \colon \cat C \to \cat{C}$ is a dagger Frobenius monad.
\end{example}
\begin{proof}
   The monad laws become the monoid laws.
   \[
     \mu_A = \begin{pic}
       \node[dot] (d) at (0,0) {};
       \draw (d.north) to (0,.5) node[right] {$B$};
       \draw (d.east) to[out=0,in=90] +(.3,-.3) to +(0,-.2) node[right] {$B$};
       \draw (d.west) to[out=180,in=90] +(-.3,-.3) to +(0,-.2) node[right] {$B$};
       \draw (-.8,-.5) node[left] {$A$} to (-.8,.5) node[left] {$A$};
     \end{pic} 
     \qquad
     \eta_A = \begin{pic}
       \node[dot] (d) at (0,0) {};
       \draw (d.north) to (0,.5) node[right] {$B$};
       \draw (-.4,-.5) node[left] {$A$} to (-.4,.5) node[left] {$A$};
     \end{pic} 
  \]
  The Frobenius law of the monoid implies the Frobenius law of the monad:
  \[
    T\mu \circ \mu^\dag_T
    =
    \begin{pic}[scale=.75]
      \draw (0,0) node[below] {$B$} to (0,1) to[out=90,in=180] (.5,1.5) to (.5,2) node[above] {$B$};
      \draw (.5,1.5) to[out=0,in=90] (1,1) to[out=-90,in=180] (1.5,.5) to (1.5,0) node[below] {$B$};
      \draw (1.5,.5) to[out=0,in=-90] (2,1) to (2,2) node[above] {$B$};
      \node[dot] at (.5,1.5) {};
      \node[dot] at (1.5,.5) {};
      \draw (-.5,0) node[below] {$A$} to (-.5,2) node[above] {$A$};
    \end{pic}
    =
    \begin{pic}[yscale=.75,xscale=-.75]
      \draw (0,0) node[below] {$B$} to (0,1) to[out=90,in=180] (.5,1.5) to (.5,2) node[above] {$B$};
      \draw (.5,1.5) to[out=0,in=90] (1,1) to[out=-90,in=180] (1.5,.5) to (1.5,0) node[below] {$B$};
      \draw (1.5,.5) to[out=0,in=-90] (2,1) to (2,2) node[above] {$B$};
      \node[dot] at (.5,1.5) {};
      \node[dot] at (1.5,.5) {};
      \draw (2.5,0) node[below] {$A$} to (2.5,2) node[above] {$A$};
    \end{pic}
    =
    \mu_T \circ T\mu^\dag.
  \]
  The converse follows by taking $A=I$.
\end{proof}

For another example: if $T$ is a dagger Frobenius monad on a dagger category $\cat{C}$, and $\cat{D}$ is any other dagger category, then $T \circ -$ is a dagger Frobenius monad on $[\cat D, \cat C]$.

\begin{lemma}\label{lem:extendedfrobeniuslaw}
  If $T$ is a dagger Frobenius monad on a dagger category, $\mu^\dag \circ \mu = \mu T \circ T\mu^\dag$.
\end{lemma}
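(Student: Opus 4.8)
The plan is to recognize the statement as an instance of a standard Frobenius-monoid identity and to prove it in the graphical calculus of the monoidal dagger category $[\cat C,\cat C]$, where $T$ is a dagger Frobenius monoid by hypothesis. Writing $\delta := \mu^\dag$ for the comultiplication, the whiskerings unfold (under the convention that the tensor product of $[\cat C,\cat C]$ is composition of functors, with the outer functor drawn leftmost) as $\mu T = \mu \otimes \id$ and $T\mu^\dag = \id \otimes \delta$, so that the claim $\mu^\dag \circ \mu = \mu T \circ T\mu^\dag$ reads $\delta \circ \mu = (\mu \otimes \id)\circ(\id \otimes \delta)$. This is exactly the Frobenius identity expressing that comultiplying a product equals the right-handed zig-zag. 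Note that the defining hypothesis~\eqref{eq:frobeniusformonads} is precisely the Frobenius law~\eqref{eq:frobeniuslaw} for $T$ inside $[\cat C,\cat C]$, i.e.\ the equality of the two zig-zags $(\mu \otimes \id)(\id \otimes \delta) = (\id \otimes \mu)(\delta \otimes \id)$, so the task is to derive the ``hourglass equals zig-zag'' form from this.

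The key steps, read as pictures, are as follows. First I would rewrite $\delta \circ \mu$ using the unit law, attaching a unit $\eta$ to one output leg of $\delta$ and multiplying it straight back in, obtaining $(\id \otimes \mu)\circ(\delta \otimes \id)\circ(\id \otimes \eta)\circ\mu$; this changes nothing but exposes a zig-zag. Second I would apply the Frobenius law~\eqref{eq:frobeniuslaw} to flip the exposed zig-zag $(\id \otimes \mu)(\delta \otimes \id)$ into $(\mu \otimes \id)(\id \otimes \delta)$. Third I would use associativity of $\mu$ to regroup the multiplication so that the incoming left wire is split off from the product. Finally I would recognize, again by the unit law, that the piece now acting on the right wire is just $\delta$, leaving $(\mu \otimes \id)\circ(\id \otimes \delta)$, as required. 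The mirror-image orientation giving $\mu^\dag \circ \mu = \mu T \circ T\mu^\dag$ directly then comes for free, since the two zig-zags are identified by the Frobenius law.

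The main obstacle is the interplay of the first two steps: one cannot simply cancel the inserted unit, because the whole point is to first expose a zig-zag, rearrange it via the Frobenius law, and only afterwards collapse the auxiliary wire using associativity and the unit law. Placing the insertion correctly, so that the Frobenius law becomes applicable and associativity subsequently factors out the spurious unit, is the only genuine idea; everything else is routine bookkeeping. A secondary point to verify is the translation between whiskering and tensoring with $\id_T$, but the attendant left/right orientation ambiguity is harmless precisely because~\eqref{eq:frobeniuslaw} equates the two zig-zags.
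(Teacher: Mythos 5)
Your first three steps are correct and take the same route as the paper's own graphical proof (insert a unit, flip the exposed zig-zag by the Frobenius law, regroup by associativity), but your final step contains a genuine gap. After the associativity step you have
\[
  \mu^\dag \circ \mu
  \;=\;
  (\mu \otimes \id)\circ\Bigl(\id \otimes \bigl[(\mu\otimes\id)\circ(\id\otimes\mu^\dag)\circ(\id\otimes\eta)\bigr]\Bigr),
\]
and you claim the bracketed piece equals $\mu^\dag$ ``again by the unit law''. It does not: the unit law cancels $\eta$ only when $\eta$ feeds directly into a multiplication, whereas here $\eta$ sits underneath the comultiplication, so the piece comultiplies the unit and multiplies only the \emph{first leg} of $\mu^\dag\circ\eta$ into the wire. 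The identity $(\mu\otimes\id)\circ(\id\otimes(\mu^\dag\circ\eta))=\mu^\dag$ is not a formal consequence of the monoid and comonoid axioms, so no amount of associativity-and-unit bookkeeping can establish it: for the bialgebra $k[x]/(x^2)$ with $\delta(1)=1\otimes 1$, $\delta(x)=x\otimes 1+1\otimes x$, $\varepsilon(x)=0$, all unit, counit, associativity and coassociativity laws hold, yet $(\mu\otimes\id)(\id\otimes(\delta\circ\eta))(x)=x\otimes 1\neq\delta(x)$. Worse, the identity you need is exactly the statement of the lemma precomposed with $\id\otimes\eta$, so declaring it routine assumes a special case of what is being proved.

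The repair is a second application of the Frobenius law, which is what frees the trapped unit:
\[
  (\mu\otimes\id)\circ(\id\otimes\mu^\dag)\circ(\id\otimes\eta)
  \;=\;
  (\id\otimes\mu)\circ(\mu^\dag\otimes\id)\circ(\id\otimes\eta)
  \;=\;
  \bigl(\id\otimes(\mu\circ(\id\otimes\eta))\bigr)\circ\mu^\dag
  \;=\;
  \mu^\dag,
\]
using \eqref{eq:frobeniuslaw}, then the interchange law, then the genuine unit law. With this inserted your argument closes, and it then agrees with the paper's proof essentially step for step: the paper's five graphical equalities are (co)unit law, Frobenius law, (co)associativity, Frobenius law, (co)unit law. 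The Frobenius law necessarily enters twice on any route of this shape; the final collapse is not ``routine bookkeeping'' but the second place where it is indispensable.
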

\begin{proof}
  The following graphical derivation holds for any dagger Frobenius monoid.
  \begin{align*}
    \begin{pic}[yscale=.5625,xscale=.75]
      \node (0a) at (-0.5,-1) {};
      \node (0b) at (0.5,-1) {};
      \node[dot] (1) at (0,1) {};
      \node[dot] (2) at (0,2) {};
      \node (3a) at (-0.5,3) {};
      \node (3b) at (0.5,3) {};
      \draw[out=90,in=180,looseness=.66] (0a) to (1.west);
      \draw[out=90,in=0,looseness=.66] (0b) to (1.east);
      \draw (1.north) to (2.south);
      \draw[out=180,in=270] (2.west) to (3a);
      \draw[out=0,in=270] (2.east) to (3b);
    \end{pic}
    =
    \begin{pic}[yscale=.5625,xscale=.75]
      \node (0d) at (-1,-1) {};
      \node [dot] (0c) at (-1.5,1) {};
      \node [dot] (0a) at (-1,0) {};
      \node (0b) at (0.5,-1) {};
      \node[dot] (1) at (0,1) {};
      \node[dot] (2) at (0,2) {};
      \node (3a) at (-0.5,3) {};
      \node (3b) at (0.5,3) {};
      \draw (0a) to (0d);
      \draw[out=180,in=270] (0a) to (0c);
      \draw[out=0,in=180] (0a) to (1.west);
      \draw[out=90,in=0,looseness=0.66] (0b) to (1.east);
      \draw (1.north) to (2.south);
      \draw[out=180,in=270] (2.west) to (3a);
      \draw[out=0,in=270] (2.east) to (3b);
    \end{pic}
    =
    \begin{pic}[yscale=.5625,xscale=.75]
      \node (0d) at (-2,-1) {};
      \node [dot] (0c) at (-1.5,2) {};
      \node [dot] (0a) at (-.5,0) {};
      \node (0b) at (-.5,-1) {};
      \node[dot] (1) at (-1.5,1) {};
      \node[dot] (2) at (0,2) {};
      \node (3a) at (-0.5,3) {};
      \node (3b) at (0.5,3) {};
      \draw[out=180,in=90,looseness=0.66] (1.west) to (0d);
      \draw (1.north) to (0c.south);
      \draw[out=180,in=0,looseness=1] (0a.west) to (1.east);
      \draw (0b) to (0a.south);
      \draw[out=0,in=270,out looseness=.7] (0a.east) to (2.south);
      \draw[out=180,in=270] (2.west) to (3a);
      \draw[out=0,in=270] (2.east) to (3b);
    \end{pic}
    =
    \begin{pic}[yscale=.5625,xscale=.75]
      \node (0d) at (-2,-1) {};
      \node [dot] (0c) at (-1.5,2.5) {};
      \node [dot] (0a) at (-.75,.75) {};
      \node (0b) at (0,-1) {};
      \node[dot] (1) at (-1.5,1.5) {};
      \node[dot] (2) at (0,0) {};
      \node (3a) at (0,3) {};
      \node (3b) at (0.75,3) {};
      \draw[out=180,in=90,out looseness=.5] (1.west) to (0d);
      \draw (1.north) to (0c.south);
      \draw[out=180,in=0] (0a.west) to (1.east);
      \draw (0b) to (2.south);
      \draw[out=270,in=180] (0a.south) to (2.west);
      \draw[out=0,in=270,out looseness=.66] (2.east) to (3b);
      \draw[out=0,in=270,out looseness=.8] (0a.east) to (3a);
    \end{pic}
    =
    \begin{pic}[yscale=.75,xscale=.75]
      \node [dot] (a) at (-0.5, 0.5) {};
      \node [dot] (b) at (0,0) {};
      \node [dot] (c) at (1,1) {};
      \node [dot] (d) at (2,0) {};
      \node (A) at (0,-1) {};
      \node (B) at (2,-1) {};
      \node (C) at (1,2) {};
      \node (D) at (2.5,2) {};
      \draw (c.north) to (C);
      \draw (B) to (d.south);
      \draw (A) to (b.south);
      \draw[out=270,in=180] (a.south) to (b.west);
      \draw[out=0,in=180] (b.east) to (c.west);
      \draw[out=0,in=180] (c.east) to (d.west);
      \draw[out=0,in=270,out looseness=.66] (d.east) to (D);
    \end{pic}
    =
    \begin{pic}[yscale=.75,xscale=.75]
      \node (0) at (0,0) {};
      \node (0a) at (0,1) {};
      \node[dot] (1) at (0.5,2) {};
      \node[dot] (2) at (1.5,1) {};
      \node (3) at (1.5,0) {};
      \node (4) at (2,3) {};
      \node (4a) at (2,2) {};
      \node (5) at (0.5,3) {};
      \draw (0) to (0a.center);
      \draw[out=90,in=180] (0a.center) to (1.west);
      \draw[out=0,in=180] (1.east) to (2.west);
      \draw[out=0,in=270,looseness=.66] (2.east) to (4a.center);
      \draw (4a.center) to (4);
      \draw (2.south) to (3);
      \draw (1.north) to (5);
    \end{pic}
  \end{align*}
  These equalities use the unit law, the Frobenius law, and associativity.
\end{proof}

The following lemma shows that dagger Frobenius monads have the same relationship to dagger adjunctions as ordinary monads have to ordinary adjunctions.

\begin{lemma}\label{lem:daggeradjunction}
  If $F \dashv G$ is a dagger adjunction, then $G \circ F$ is a dagger Frobenius monad.
\end{lemma}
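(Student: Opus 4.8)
The plan is to equip $T = G \circ F$ with its usual monad structure—unit $\eta$ (the unit of the adjunction) and multiplication $\mu = G\epsilon F$, where $\epsilon\colon FG \to \id[\cat{D}]$ is the counit—and then verify the two defining conditions of a dagger Frobenius monad. That $(T,\mu,\eta)$ is a monad is the classical fact that any adjunction resolves its composite into a monad, so nothing dagger-specific is needed there. The condition $T(f^\dag) = T(f)^\dag$ is immediate, since $T$ is a composite of the dagger functors $F$ and $G$.

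The real content is the Frobenius law~\eqref{eq:frobeniusformonads}, $T(\mu_A) \circ \mu^\dag_{T(A)} = \mu_{T(A)} \circ T(\mu_A^\dag)$. The key preliminary observation is twofold: because $G$ is a dagger functor, $\mu_A^\dag = (G\epsilon_{FA})^\dag = G(\epsilon_{FA}^\dag)$, so that $\mu^\dag$ is exactly $G(\epsilon^\dag F)$, the comultiplication of the comonad that $G \dashv F$ induces on $GF$; and because $FG$ and $\id[\cat{D}]$ are both dagger functors, the componentwise dagger $\epsilon^\dag\colon \id[\cat{D}] \to FG$ of the counit is again a natural transformation, as noted just after the definition of dagger functor. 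With these in hand I would expand both sides using $T = GF$ and pull all daggers through $G$:
\[
  T(\mu_A) \circ \mu^\dag_{T(A)} = G\big( FG(\epsilon_{FA}) \circ \epsilon_{FGFA}^\dag \big),
  \qquad
  \mu_{T(A)} \circ T(\mu_A^\dag) = G\big( \epsilon_{FGFA} \circ FG(\epsilon_{FA}^\dag) \big).
\]

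Both expressions inside $G$ then collapse to the single morphism $\epsilon_{FA}^\dag \circ \epsilon_{FA}\colon FGFA \to FGFA$ by naturality. For the left-hand side, naturality of $\epsilon^\dag$ applied to the morphism $\epsilon_{FA}$ gives $FG(\epsilon_{FA}) \circ \epsilon_{FGFA}^\dag = \epsilon_{FA}^\dag \circ \epsilon_{FA}$; for the right-hand side, naturality of $\epsilon$ applied to the morphism $\epsilon_{FA}^\dag$ gives $\epsilon_{FGFA} \circ FG(\epsilon_{FA}^\dag) = \epsilon_{FA}^\dag \circ \epsilon_{FA}$. Hence both sides equal $G(\epsilon_{FA}^\dag \circ \epsilon_{FA})$ and the Frobenius law holds.

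I expect no serious obstacle here: once $\mu^\dag$ is recognised as the comultiplication $G(\epsilon^\dag F)$, the whole statement reduces to two instances of naturality—one of the counit $\epsilon$ and one of its dagger $\epsilon^\dag$. The only points demanding care are bookkeeping which object each component of $\epsilon$ is indexed by, and invoking dagger-functoriality of $G$ at precisely the right moments to move daggers in and out of $G$. A graphical rendering in the calculus of Section~\ref{sec:pictures}, with the adjunction drawn as cups and caps, would make the two naturality steps visually transparent and is the form I would ultimately prefer to present.
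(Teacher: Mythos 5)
Your proof is correct, and it takes a genuinely different route from the paper's. The paper handles the Frobenius law by citing~\cite[Corollary~2.22]{lauda:ambidextrous} applied to $\cat{DagCat}$, and only supplies a self-contained argument after Theorem~\ref{thm:comparison}: there the comparison functor $J\colon \cat{D}\to \EM(T)$ is shown to land in $\FEM(T)$ even for monads not assumed to satisfy the Frobenius law, so every free algebra $(TA,\mu_A)$ is a FEM-algebra, and the FEM law~\eqref{eq:femlaw} for a free algebra is exactly the Frobenius law~\eqref{eq:frobeniusformonads} (using that $T$ is a dagger functor). Your argument is instead a direct computation available at the point where the lemma is stated: writing $\mu = G\varepsilon F$ and pulling daggers through the dagger functor $G$, both sides of~\eqref{eq:frobeniusformonads} become $G$ applied to a composite in $\cat{D}$, and two naturality squares---naturality of $\varepsilon^\dag\colon \id[\cat{D}]\to FG$ (valid since $FG$ and $\id[\cat{D}]$ are dagger functors, per the remark in Section~\ref{sec:dagger}) instantiated at $\varepsilon_{FA}$, and naturality of $\varepsilon$ instantiated at $\varepsilon_{FA}^\dag$---collapse both composites to $\varepsilon_{FA}^\dag \circ \varepsilon_{FA}$, so both sides equal $G(\varepsilon_{FA}^\dag \circ \varepsilon_{FA})$. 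I checked the indexing in both naturality instances and the identification $\mu^\dag_A = G(\varepsilon^\dag_{FA})$; they are correct. As for what each approach buys: yours is elementary and self-contained exactly where the lemma is needed (something the paper explicitly defers), and it exhibits the Frobenius law as two naturality squares meeting in the manifestly self-adjoint morphism $G(\varepsilon_{FA}^\dag \circ \varepsilon_{FA})$; the paper's citation is shorter and ties the result to the general theory of ambidextrous adjunctions, while its FEM-algebra proof reuses machinery (Theorem~\ref{thm:comparison}) that the paper develops anyway, at the cost of making this early lemma depend on a later section.
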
 
\begin{proof}
  It is clear that $T = G \circ F$ is a dagger functor.
  The Frobenius law follows from applying~\cite[Corollary 2.22]{lauda:ambidextrous} to $\cat{DagCat}$.
  We will be able to give a self-contained proof after Theorem~\ref{thm:comparison} below.
\end{proof}

For example, in \cat{Rel} and \cat{Hilb}, the dagger biproduct monad induced by the dagger adjunction of Example~\ref{ex:dagbiprods} is of the form $-\otimes (I\oplus I)$ as in Example~\ref{example:tensor}. 
However, not all dagger Frobenius monads are of this form: the Frobenius monad induced by the dagger adjunction of Example~\ref{ex:imdagadj} in general decreases the dimension of the underlying space, and hence cannot be of the form $-\otimes B$ for a fixed $B$.

\section{Algebras}\label{sec:fem} 

Next we consider algebras for dagger Frobenius monads. We start by showing that Kleisli categories of dagger Frobenius monads inherit a dagger.

\begin{lemma}\label{lem:kleislidagger}
  If $T$ is a dagger Frobenius monad on a dagger category $\cat{C}$, then $\Kl(T)$ carries a dagger that commutes with the canonical functors $\Kl(T) \to \cat{C}$ and $\cat{C}\to\Kl(T)$.
\end{lemma}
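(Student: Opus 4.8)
The plan is to give $\Kl(T)$ the same objects as $\cat C$ and, for a Kleisli morphism $f\colon A\to B$, i.e.\ a morphism $f\colon A\to TB$ of $\cat C$, to define its Kleisli dagger $f^\dag\colon B\to A$ to be the composite $T(f^\dag)\circ\mu_B^\dag\circ\eta_B\colon B\to TB\to T^2B\to TA$ computed in $\cat C$, where on the right $f^\dag$ denotes the dagger in $\cat C$. This assignment is manifestly identity on objects. The canonical functors are $\cat C\to\Kl(T)$, $h\mapsto\eta\circ h$, and $\Kl(T)\to\cat C$, $f\mapsto\mu\circ Tf$; the content is to verify that the assignment is an involutive contravariant functor and that it makes these two functors into dagger functors.

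First I would check involution. Expanding $(f^\dag)^\dag$, using that $T$ is a dagger functor together with naturality of $\eta$ and of $\mu^\dag$, the whole expression collapses to $\bigl(T(\eta_B^\dag)\circ T(\mu_B)\circ\mu_{TB}^\dag\circ\eta_{TB}\bigr)\circ f$, so it suffices to show the bracketed endomorphism of $TB$ is the identity. Rewriting $T(\mu_B)\circ\mu_{TB}^\dag=\mu_{TB}\circ T(\mu_B^\dag)$ by the monad Frobenius law~\eqref{eq:frobeniusformonads}, then pushing $\eta$ across $\mu^\dag$ by naturality and cancelling with the unit law $\mu\circ\eta T=\id$, leaves $T(\eta_B^\dag)\circ\mu_B^\dag$, which is the identity by the comonad counit law. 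The same counit law shows the Kleisli identity $\eta_A$ is self-dagger.

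Next I would prove contravariant functoriality $(g\circ f)^\dag=f^\dag\circ g^\dag$ over Kleisli composition $g\circ f=\mu\circ Tg\circ f$. By naturality of $\mu$ and of $\mu^\dag$, together with the auxiliary identity $\mu_{TB}\circ T(\mu_B^\dag)\circ T(\eta_B)=\mu_B^\dag$ — itself a consequence of the Frobenius law, naturality of $\mu^\dag$, and the unit law — both sides rewrite into $T(f^\dag)\circ T^2(g^\dag)$ postcomposed with two copies of $\mu^\dag$ and $\eta$, and the two expressions agree precisely by the coassociativity $\mu_{TC}^\dag\circ\mu_C^\dag=T(\mu_C^\dag)\circ\mu_C^\dag$, the dagger of monad associativity. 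Finally, that the two canonical functors are dagger functors is a direct check: $(\eta_B\circ h)^\dag=\eta_A\circ h^\dag$ follows from the counit law and naturality of $\eta$, while dagger-preservation of $f\mapsto\mu\circ Tf$ reduces to the same auxiliary identity $\mu_{TB}\circ T(\mu_B^\dag)\circ T(\eta_B)=\mu_B^\dag$ used above.

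I expect the main obstacle to be twofold: hitting on the correct definition of the Kleisli dagger (the combination $T(f^\dag)\circ\mu_B^\dag\circ\eta_B$ is essentially forced by wanting $\eta$ to be self-dual and the right adjoint to preserve daggers), and organising the functoriality computation, whose crux is that auxiliary identity; this is where the full Frobenius law is genuinely needed, the remaining steps using only naturality and the ordinary (co)monad laws. Presenting these manipulations in the graphical calculus of Section~\ref{sec:pictures} should make them transparent.
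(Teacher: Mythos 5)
Your proposal is correct and takes exactly the paper's approach: the paper defines the dagger on $\Kl(T)$ by the very same formula $f \mapsto T(f^\dag)\circ\mu_B^\dag\circ\eta_B$ and leaves the verification as ``a straightforward calculation''. Your sketch of that calculation --- involutivity via the Frobenius law, naturality and the (co)unit laws; contravariant functoriality via the auxiliary identity $\mu_{TB}\circ T(\mu_B^\dag)\circ T(\eta_B)=\mu_B^\dag$ and coassociativity; and the dagger-functoriality of both canonical functors --- is accurate in every step.
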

\begin{proof} 
  A straightforward calculation establishes that
  \[
    \big(A \sxto{f} T(B)\big)
    \;\mapsto\;
    \big(B \sxto{\eta} T(B) \sxto{\mu^\dag} T^2(B) \sxto{T(f^\dag)} T(A)\big)
  \]
  is a dagger on $\Kl(T)$ commuting with the functors $\cat{C}\to \Kl(T)$ and $\Kl(T)\to \cat{C}$.
\end{proof} 

If we want algebras to form a dagger category, it turns out that the category of all Eilenberg-Moore algebras is too large. The crucial law is the following \emph{Frobenius law} for algebras.

\begin{definition} 
   Let $T$ be a monad on a dagger category $\cat{C}$. A \emph{Frobenius-Eilenberg-Moore algebra}, or \emph{FEM-algebra} for short, is an Eilenberg-Moore algebra $a \colon T(A) \to A$ that makes the following diagram commute.
   \begin{equation}\label{eq:femlaw}\begin{aligned}\begin{tikzpicture}
     \matrix (m) [matrix of math nodes,row sep=2em,column sep=4em,minimum width=2em]
     {
      T(A) & T^2(A) & \\
      T^2(A) & T(A) \\};
     \path[->]
     (m-1-1) edge node [left] {$\mu^\dag$} (m-2-1)
             edge node [above] {$T(a)^\dag$} (m-1-2)
     (m-2-1) edge node [below] {$T(a)$} (m-2-2)
     (m-1-2) edge node [right] {$\mu$} (m-2-2);
   \end{tikzpicture}\end{aligned}\end{equation}   
   Denote the category of FEM-algebras $(A,a)$ and algebra homomorphisms by $\FEM(T)$.
\end{definition}

When the dagger Frobenius monad is of the form $T(A)=A \otimes B$ as in Example~\ref{example:tensor}, the Frobenius law~\eqref{eq:femlaw} for an algebra $a \colon T(A) \to A$ becomes the following equation, which resembles the Frobenius law~\eqref{eq:frobeniuslaw} for monoids and monads.
\[
 \begin{pic}[xscale=1.5]
   \node[morphism, minimum width=25mm] (a)  at (0,0.75) {$a$};
   \node[dot] (b) at (.4,0) {};
   \draw ([xshift=-.5mm]a.south west) to +(0,-1);
   \draw (a.north) to +(0,.4);
   \draw (b) to[out=0,in=-90] +(.3,.5) to +(0,.85);
   \draw ([xshift=.5mm]a.south east) to[out=-90,in=180]  (b);
   \draw (b) to +(0,-.45);
 \end{pic}
 \quad = \quad
 \begin{pic}[xscale=1.5]
   \node[morphism,hflip, minimum width=25mm] (a)  at (0,0) {$a$};
   \node[dot] (b) at (.4,0.75) {};
   \draw ([xshift=-.5mm]a.north west) to +(0,1);
   \draw (a.south) to +(0,-.4);
   \draw (b) to[out=0,in=90] +(.3,-.5) to +(0,-.85);
   \draw ([xshift=.5mm]a.north east) to[out=90,in=180]  (b);
   \draw (b) to +(0,.45);
 \end{pic} 
\]

\begin{example}\label{ex:kleislifem}
  Any free algebra $\mu_A \colon T^2(A) \to T(A)$ of a dagger Frobenius monad $T$ on a dagger category $\cat C$ is a FEM-algebra.
  Hence there is an embedding $\Kl(T) \to \FEM(T)$.
\end{example}
\begin{proof}
  The Frobenius law for the free algebra is the Frobenius law of the monad.
\end{proof}

There are many EM-algebras that are not FEM-algebras; a family of examples can be derived from~\cite[Theorem~6.4]{pavlovic:abstraction}. Here is a concrete example.

\begin{example}\label{ex:emnonfem}
  The complex $n$-by-$n$-matrices form a Hilbert space $A$ with inner product $\left\langle a,b\right\rangle=\tfrac{1}{n}\Tr (a^\dag \circ b)$. Matrix multiplication $m\colon A\otimes A\to A$ makes $A$ into a dagger Frobenius monoid in $\cat{FHilb}$, and hence $T=-\otimes A$ into a dagger Frobenius monad on $\cat{FHilb}$. Define a monoid homomorphism $U\colon A\to A$ by conjugation $a\mapsto u^\dag\circ a\circ u$ with a unitary matrix $u \in A$. Then $m\circ (\id[A] \otimes U)$ is an EM-algebra that is a FEM-algebra if and only if $u=u^\dag$.
\end{example}
\begin{proof}
  Because $U^\dag=u \circ - \circ u^\dag$, the Frobenius law~\eqref{eq:femlaw} for $T$ unfolds to the following.
  \[
    \begin{pic}
     \draw (0,0) to (0,1) to[out=90,in=180] (.5,1.5) to (.5,2);
     \draw (.5,1.5) to[out=0,in=90] (1,1) to[out=-90,in=180] (1.5,.5) to (1.5,0);
     \draw (1.5,.5) to[out=0,in=-90] (2,1) to (2,2);
     \node[morphism] at (1,1) {$U$};
     \node[dot] at (.5,1.5) {};
     \node[dot] at (1.5,.5) {};
    \end{pic}
    \quad = \quad
    \begin{pic}[xscale=-1]
     \draw (0,0) to (0,1) to[out=90,in=180] (.5,1.5) to (.5,2);
     \draw (.5,1.5) to[out=0,in=90] (1,1) to[out=-90,in=180] (1.5,.5) to (1.5,0);
     \draw (1.5,.5) to[out=0,in=-90] (2,1) to (2,2);
     \node[morphism,hflip] at (1,1) {$U$};
     \node[dot] at (.5,1.5) {};
     \node[dot] at (1.5,.5) {};
    \end{pic}
  \]
  This comes down to $U=(U^*)^\dag$, that is, $u=u^\dag$. 
\end{proof}

Before we calculate $\FEM(-\otimes B)$ for a dagger Frobenius monoid $B$ induced by an arbitrary groupoid, we work out an important special case.

\begin{example}\label{ex:measurement}
  Let $B$ be a dagger Frobenius monoid in $\cat{FHilb}$ induced by a finite discrete groupoid $\cat{G}$ as in Example~\ref{ex:groupoidFrob}. A FEM-algebra structure on a Hilbert space $A$ for $- \otimes B$ consists of \emph{quantum measurements} on $A$: orthogonal projections on $A$ that sum to the identity.
\end{example} 
For more information about quantum measurements, see~\cite[Section~3.2]{heinosaariziman:quantum}.
\begin{proof}
  A FEM-algebra structure on $A$ consists of a map $a\colon A\otimes B\to A$ subject to the FEM-laws. Since $B$ has a basis indexed by objects of \cat{G}, it suffices to understand the maps $P_G\colon A\to A$ defined by $v\mapsto a(v\otimes \id[G])$. The associative law implies that each $P_G$ satisfies $P_G\circ P_G=P_G$, and from the Frobenius law we get that each $P_G$ is self-adjoint, so that each $P_G$ is an orthonormal projection. The unit law says that $\sum_G P_G = \id[A]$.

  There is also another, graphical, way of seeing this. Quantum measurements can also be characterized as `$B$-self-adjoint' coalgebras for the comonad $-\otimes B$, where being $B$-self-adjoint means that the following equation holds~\cite{coeckepavlovic:measurement}.
  \begin{equation}\label{eq:self-adjoint}
  \begin{pic}
   \node[morphism] (a)  at (0,0.75) {};
    \node[dot] (b) at (.5,0) {};
    \node[dot] (c) at (.5,-0.35) {};
   \draw ([xshift=-1mm]a.south west) to +(0,-1);
   \draw (a.north) to +(0,.4);
   \draw (b) to[out=0,in=-90] +(.3,.5) to +(0,.85);
   \draw ([xshift=1mm]a.south east) to[out=-90,in=180]  (b);
   \draw (b) to (c);
  \end{pic}
  \quad = \quad
  \begin{pic}
    \node[morphism,hflip] (f) at (-.4,0) {};
    \draw ([xshift=-1mm]f.north west) to +(0,.65);
    \draw (f.south) to +(0,-.65);
    \draw ([xshift=1mm]f.north east) to +(0,.65);
  \end{pic} 
  \end{equation}
  Such coalgebras correspond precisely to $\FEM$-algebras, as we will now show.
  Because of the dagger, coalgebras of the comonad $-\otimes B$ are just algebras of the monad $-\otimes B$. Thus it suffices to show that an algebra is FEM if and only if it satisfies~\eqref{eq:self-adjoint}. The implication from left to right is easy. 
	\[
	\begin{pic}[yscale=.7]
   		\node[morphism] (a)  at (0,0.75) {};
    	\node[dot] (b) at (.5,0) {};
    	\node[dot] (c) at (.5,-0.35) {};
   		\draw ([xshift=-1mm]a.south west) to +(0,-1);
   		\draw (a.north) to +(0,.4);
   		\draw (b) to[out=0,in=-90] +(.3,.5) to +(0,.85);
   		\draw ([xshift=1mm]a.south east) to[out=-90,in=180]  (b);
   		\draw (b) to (c);
  	\end{pic}
	\quad = \quad 
    \begin{pic}[yscale=.7]
   		\node[morphism,hflip] (a)  at (0,0) {};
    	\node[dot] (b) at (.5,0.75) {};
    	\node[dot] (c) at (.9,-0.5) {};
   		\draw ([xshift=-1mm]a.north west) to +(0,1);
   		\draw (a.south) to +(0,-.4);
   		\draw (b) to[out=0,in=90] +(.4,-.4) to (c);
   		\draw ([xshift=1mm]a.north east) to[out=90,in=180]  (b);
   		\draw (b) to +(0,.45);
    \end{pic}
    \quad = \quad
    \begin{pic}[yscale=.7]
    	\node[morphism,hflip] (f) at (-.4,0) {};
    	\draw ([xshift=-1mm]f.north west) to +(0,.65);
    	\draw (f.south) to +(0,-.65);
    	\draw ([xshift=1mm]f.north east) to +(0,.65);
    \end{pic} 
	\]
	The other implication can be proven as follows.
	\[
    \begin{pic}[yscale=.7]
   		\node[morphism,hflip] (a)  at (0,0) {};
    	\node[dot] (b) at (.5,0.75) {};
   		\draw ([xshift=-1mm]a.north west) to +(0,1.2);
   		\draw (a.south) to +(0,-.4);
   		\draw (b) to[out=0,in=90] +(.35,-.5) to +(0,-.85);
   		\draw ([xshift=1mm]a.north east) to[out=90,in=180]  (b);
   		\draw (b) to +(0,.64);
    \end{pic}
    \quad \overset{(\ref{eq:self-adjoint})}{=}  \quad
    \begin{pic}[yscale=.7]
   		\node[morphism] (a)  at (0,0.75) {};
    	\node[dot] (b) at (.5,0) {};
    	\node[dot] (c) at (.5,-0.35) {};
    	\node[dot] (d) at (1.1,0.75) {};
   		\draw ([xshift=-1mm]a.south west) to +(0,-1);
   		\draw (a.north) to +(0,.4);
   		\draw (b) to[out=0,in=180] (d);
   		\draw ([xshift=1mm]a.south east) to[out=-90,in=180]  (b);
   		\draw (d) to[out=0, in=90] +(0.3,-.5) to +(0,-.75);
   		\draw (d) to +(0,0.6);
   		\draw (b) to (c);
  	\end{pic}
  	\quad = \quad 
    \begin{pic}[yscale=.7]
   		\node[morphism] (a)  at (0,0.65) {};
    	\node[dot] (b) at (.3,0) {};
    	\node[dot] (c) at (.03,-0.6) {};
    	\node[dot] (d) at (.85,-0.6) {};
   		\draw ([xshift=-1mm]a.south west) to +(0,-1.5);
   		\draw (a.north) to +(0,.4);
   		\draw (b) to[out=0,in=180] (d);
   		\draw ([xshift=2.5mm]a.south east) to (b);
   		\draw (d) to[out=0, in=90] +(0.3,.5) to +(0,1.3);
   		\draw (d) to +(0,-0.5);
   		\draw (b) to[out=180, in=90] (c);
  	\end{pic}
  	\quad = \quad
    \begin{pic}[yscale=.7]
   		\node[morphism] (a)  at (0,0.75) {};
    	\node[dot] (b) at (.5,0) {};
   		\draw ([xshift=-1mm]a.south west) to +(0,-1);
   		\draw (a.north) to +(0,.4);
   		\draw (b) to[out=0,in=-90] +(.3,.5) to +(0,.85);
   		\draw ([xshift=1mm]a.south east) to[out=-90,in=180]  (b);
   		\draw (b) to +(0,-.45);
  	\end{pic}
   	\]
   	This finishes the alternative proof.
\end{proof}

\begin{example}\label{ex:femtensor}
  Let a groupoid $\cat{G}$ induce a dagger Frobenius monoid $B$ in $\cat{C}=\cat{Rel}$ or $\cat{C}=\cat{FHilb}$ as in Example~\ref{ex:groupoidFrob}. There is an equivalence $\FEM(-\otimes B) \simeq [\cat{G},\cat{C}]$.
\end{example}
\begin{proof}
  Separate the cases $\cat{Rel}$ and $\cat{FHilb}$.
  \begin{itemize}
  \item In $\cat{Rel}$, a FEM-algebra is a set $A$ with a \emph{relation} $g \colon A \to A$ for each $g \in B$ satisfying several equations.
    For each object $G$ of \cat{G}, define $A_G=\{a\in A\mid \id[G] a=\{a\} \}$. 
    The unit law implies that each $a \in A$ is in at least one $A_G$, and the other EM-law implies that no $a \in A$ can be in more than one $A_G$. 
    Now if $g\colon G\to H$ in \cat{G}, then $g$ defines a \emph{function} $A_G\to A_H$ and maps everything outside of $A_G$ to the empty set. 
    Thus the FEM-algebra $A$ defines an \emph{action} of \cat{G} in \cat{Rel}: a functor $F_A\colon\cat{G}\to\cat{Rel}$ making the sets $F_A(G)$ pairwise disjoint for distinct objects of \cat{G}. 

    Conversely, each such functor $F$ defines a FEM-algebra $A_F$ by setting $A_F=\bigcup_G F(G)$. 
    But the category of such functors is equivalent to $[\cat{G},\cat{Rel}]$.

  \item In $\cat{FHilb}$, a FEM-algebra is a Hilbert space $A$ with a morphism $g \colon A \to A$ for each $g \in B$. 
    For each object $G$ of \cat{G}, define $A_G=\{a\in A\mid \id[G] a=a \}$. 
    As above, $A$ is a direct sum of the $A_G$ and $g\colon G\to H$ in \cat{G} defines a morphism $A_G\to A_H$ and annihilates $A_G^\perp$. 
    This defines a representation of \cat{G} in \cat{FHilb}. 
    The Frobenius law implies that this representation is unitary.
  \end{itemize}
\end{proof}

In both of these examples, the same reasoning goes through over all of \cat{Hilb}. The fact that all of the categories of FEM-algebras from the previous example had daggers is no accident. 

\begin{lemma}\label{lem:femdag}
  Let $T$ be a dagger Frobenius monad on a dagger category $\cat{C}$. The dagger on $\cat{C}$ induces one on $\FEM(T)$.
\end{lemma}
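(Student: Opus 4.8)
The plan is to equip $\FEM(T)$ with the dagger inherited from $\cat{C}$: a homomorphism $f\colon (A,a)\to (B,b)$ is a morphism $f\colon A\to B$ of $\cat{C}$, and I propose to send it to $f^\dag\colon B\to A$. Granting that $f^\dag$ is again an algebra homomorphism (of the reversed type), every dagger axiom---involutivity $f^{\dag\dag}=f$, contravariant functoriality, and the identity-on-objects property---is immediate from the corresponding fact in $\cat{C}$, and the forgetful functor $\FEM(T)\to\cat{C}$ is then a dagger functor by construction. So the only substantial point is to show that $f^\dag$ is an algebra homomorphism $(B,b)\to(A,a)$, i.e.\ that $f^\dag\circ b = a\circ T(f^\dag)$.

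First I would record that, because $T$ is a dagger Frobenius monad, $(T,\mu^\dag,\eta^\dag)$ is a comonad, and that for any FEM-algebra $(A,a)$ the dagger $a^\dag\colon A\to T(A)$ is a coalgebra for it: the unit and associativity laws for $a$ dualize to the counit and coassociativity laws for $a^\dag$. I would then reformulate the goal. The equation $f^\dag\circ b = a\circ T(f^\dag)$ is exactly the dagger of
\[ T(f)\circ a^\dag = b^\dag\circ f, \]
so it suffices to prove that an algebra homomorphism $f$ between FEM-algebras is automatically a \emph{coalgebra} homomorphism for the structures $a^\dag$ and $b^\dag$.

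This implication is the heart of the matter and the step I expect to be the main obstacle, since it is precisely where the Frobenius law \eqref{eq:femlaw} is needed; indeed Example~\ref{ex:emnonfem} shows the conclusion can fail for EM-algebras that are not FEM. It is the module-theoretic analogue of the fact that a homomorphism of dagger Frobenius monoids is simultaneously a comonoid homomorphism, and I would prove it graphically in the spirit of Lemma~\ref{lem:strictmorphismsareiso}: starting from $T(f)\circ a^\dag$, one bends the emitted $T$-strand using the algebra identity $f\circ a = b\circ T(f)$ together with the FEM law and the (co)unit laws, sliding the action past the coaction to arrive at $b^\dag\circ f$. Concretely I would apply \eqref{eq:femlaw} in the form $\mu_A\circ T(a^\dag)=T(a)\circ\mu_A^\dag$ (and its analogue for $B$) at the point where $f$ is transferred, with the extended Frobenius law of Lemma~\ref{lem:extendedfrobeniuslaw} available should an intermediate associativity rearrangement be required.

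Once this coalgebra-homomorphism property is established, daggering it yields $f^\dag\circ b = a\circ T(f^\dag)$, so $f^\dag$ is an FEM-algebra homomorphism; together with the inherited dagger axioms this exhibits the desired dagger on $\FEM(T)$ commuting with the forgetful functor to $\cat{C}$.
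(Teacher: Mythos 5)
Your proposal is correct and is essentially the paper's own proof: the paper likewise reduces the lemma to showing that $b \circ T(f) = f \circ a$ implies $a \circ T(f^\dag) = f^\dag \circ b$, and establishes this by a chase using exactly the ingredients you name --- the law~\eqref{eq:femlaw} for both $(A,a)$ and $(B,b)$, the homomorphism identity after applying $T$ and $\dag$, naturality of $\mu$ and $\eta^\dag$, and the (co)unit law --- with Lemma~\ref{lem:extendedfrobeniuslaw} indeed never needed. The only difference is presentational: you verify the daggered (coalgebra-homomorphism) form $T(f) \circ a^\dag = b^\dag \circ f$ graphically, whereas the paper chases a pasted commutative diagram for $a \circ T(f^\dag) = f^\dag \circ b$ directly, and the steps correspond one-to-one under $\dag$.
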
 
\begin{proof}
  Let $f \colon (A,a) \to (B,b)$ be a morphism of FEM-algebras; we are to show that $f^\dag$ is a morphism $(B,b)\to(A,a)$. 
  It suffices to show that $b \circ T(f) = f \circ a$ implies $a \circ T(f^\dag) = f^\dag \circ b$.
  Consider the following diagram:
  \[\begin{tikzpicture}[font=\small,scale=0.9]
    \matrix (m) [matrix of math nodes,row sep=3em,column sep=3em,minimum width=1em]
    {
      & T(B) & & T(A)  \\
     T^2(B) & T^2(B) & T^2(A) & T^2(A) & T(A)  \\
      &T(B) & &T(A) &       \\      
      &T(B) & &B &A    \\};
    \path[->]
    (m-1-2) edge node [above] {$Tf^\dag$} (m-1-4)
                  edge node [above] {$\mu^\dag$} (m-2-1)
                  edge node [left] {$Tb^\dag$}(m-2-2)
    (m-1-4)  edge node [above] {$\id$} (m-2-5)
                  edge node [left] {$\mu^\dag$}(m-2-4)
                  edge node [above] {$Ta^\dag$} (m-2-3) 
    (m-2-1) edge node [below] {$Tb$} (m-3-2)
                  edge node [below] {$\eta^\dag$} (m-4-2)
    (m-2-2) edge node [left] {$\mu$} (m-3-2) 
                 edge node [above] {$T^2f^\dag$} (m-2-3)
    (m-2-3) edge node [right] {$\mu$} (m-3-4) 
    (m-2-4) edge node [right] {$Ta$} (m-3-4) 
                 edge node [below] {$\eta^\dag$} (m-2-5)
    (m-2-5) edge node [right] {$a$} (m-4-5)
    (m-3-2) edge node [above] {$Tf^\dag$} (m-3-4)
    (m-3-4) edge node [above] {$\eta^\dag$} (m-4-5)
    (m-4-2) edge node [below] {$b$} (m-4-4) 
    (m-4-4) edge node [below] {$f^\dag$} (m-4-5);
    \node[gray] at (-4.25,1) {(i)};
    \node[gray] at (0,2) {(ii)};
    \node[gray] at (0,0) {(iii)};
    \node[gray] at (1.9,1) {(iv)};
    \node[gray] at (4.1,1.6) {(v)};
    \node[gray] at (4.7,-0.5) {(vi)};
    \node[gray] at (0,-2) {(vii)};
  \end{tikzpicture}\]
  Region (i) is the Frobenius law of $(B,b)$; commutativity of (ii) follows from the assumption that $f$ is a morphism $(A,a)\to (B,b)$ by applying $T$ and the dagger; (iii) is naturality of $\mu$; (iv) is the Frobenius law of $(A,a)$; (v) commutes since $T$ is a comonad; (vi) and (vii) commute by naturality of $\eta^\dag$. 
\end{proof}

In fact, the equivalence of Example~\ref{ex:femtensor} is a dagger equivalence.

\begin{lemma}\label{lem:femlaw} 
  Let $T$ be a dagger Frobenius monad. An EM-algebra $(A,a)$ is FEM if and only if $a^\dag$ is a homomorphism $(A,a)\to (TA,\mu_A)$.
\end{lemma}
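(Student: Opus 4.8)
The plan is to unfold both conditions into explicit equations and exploit that they share a common right-hand side. Writing out the FEM law \eqref{eq:femlaw} and using that $T$ is a dagger functor, so that $T(a)^\dag = T(a^\dag)$, the FEM condition reads $T(a)\circ\mu^\dag_A = \mu_A\circ T(a^\dag)$. On the other hand, since the structure map of the free algebra $(TA,\mu_A)$ is $\mu_A$, the map $a^\dag\colon A\to T(A)$ is a homomorphism $(A,a)\to(TA,\mu_A)$ precisely when $a^\dag\circ a = \mu_A\circ T(a^\dag)$. Both equations have the same right-hand side $\mu_A\circ T(a^\dag)$, so the entire content of the lemma is to compare the two left-hand sides $T(a)\circ\mu^\dag_A$ and $a^\dag\circ a$.

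For the ``if'' direction I would argue purely by daggers. Assuming $a^\dag$ is a homomorphism, i.e.\ $a^\dag\circ a = \mu_A\circ T(a^\dag)$, I take daggers of both sides. The left-hand side $a^\dag\circ a$ is self-adjoint, since $(a^\dag\circ a)^\dag = a^\dag\circ (a^\dag)^\dag = a^\dag\circ a$, while $(\mu_A\circ T(a^\dag))^\dag = T(a^\dag)^\dag\circ\mu^\dag_A = T(a)\circ\mu^\dag_A$ because $T$ preserves the dagger and $a^{\dag\dag}=a$. Hence the homomorphism equation is equivalent to $a^\dag\circ a = T(a)\circ\mu^\dag_A$, and combining this with the original equation gives $T(a)\circ\mu^\dag_A = a^\dag\circ a = \mu_A\circ T(a^\dag)$, which is exactly the FEM law. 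Note that this direction uses nothing beyond $T$ being a dagger functor.

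For the ``only if'' direction I would avoid any recomputation and instead route the argument through Lemma~\ref{lem:femdag}. The associativity law $a\circ\mu_A = a\circ T(a)$ says precisely that $a$ is an algebra homomorphism $(TA,\mu_A)\to(A,a)$. The free algebra $(TA,\mu_A)$ is a FEM-algebra by Example~\ref{ex:kleislifem}, and $(A,a)$ is a FEM-algebra by hypothesis, so $a$ is a morphism in $\FEM(T)$. By Lemma~\ref{lem:femdag} the dagger of $\cat{C}$ restricts to a dagger on $\FEM(T)$, under which $a^\dag$ is again a morphism; that is, $a^\dag$ is a homomorphism $(A,a)\to(TA,\mu_A)$, as required.

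The main obstacle is precisely this ``only if'' direction, and it is worth flagging why it cannot be handled by the same formal dagger juggling. The auxiliary identity $T(a)\circ\mu^\dag_A = a^\dag\circ a$ is \emph{not} valid for an arbitrary Eilenberg-Moore algebra: in view of Example~\ref{ex:emnonfem} it is in fact equivalent to the algebra being FEM, so its derivation genuinely needs the Frobenius law of $T$ together with the algebra laws, not just functoriality of the dagger. Lemma~\ref{lem:femdag} already packages exactly this graphical computation (its diagram invokes the Frobenius laws of both $(A,a)$ and the free algebra), which is why invoking it is cleaner than establishing $T(a)\circ\mu^\dag_A = a^\dag\circ a$ by a fresh string-diagram argument.
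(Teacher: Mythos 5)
Your proposal is correct and takes essentially the same route as the paper's own proof: the direction from FEM to the homomorphism property goes through Lemma~\ref{lem:femdag} applied to $a\colon (TA,\mu_A)\to(A,a)$, using that the free algebra is FEM, and the converse extracts the Frobenius law~\eqref{eq:femlaw} from self-adjointness of $a^\dag\circ a$. The only differences are presentational: the order of the two directions and your closing commentary on why the forward direction genuinely needs the Frobenius structure rather than dagger functoriality alone.
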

\begin{proof} 
  If $(A,a)$ is a FEM-algebra, its associativity means that $a$ is a homomorphism $(TA,\mu_A) \to (A,a)$. Here $T(A)$ is a FEM-algebra too because $T$ is a dagger Frobenius monad. Thus $a^\dag$ is a homomorphism $(A,a) \to (TA,\mu_A)$ by Lemma~\ref{lem:femdag}.

  For the converse, assume $a^\dag$ is a homomorphism $(A,a)\to (TA,\mu_A)$, so the diagram
  \[
  \begin{tikzpicture}
     \matrix (m) [matrix of math nodes,row sep=2em,column sep=4em,minimum width=2em]
     {
      T(A) & T^2(A) \\
      A & T(A) \\};
     \path[->]
     (m-1-1) edge node [left] {$a$} (m-2-1)
             edge node [above] {$Ta^\dag$} (m-1-2)
     (m-2-1) edge node [below] {$a^\dag$} (m-2-2)
     (m-1-2) edge node [right] {$\mu $} (m-2-2);
  \end{tikzpicture}
  \]
  commutes. 
  Hence $\mu \circ Ta^\dag$ is self-adjoint as $a^\dag\circ a$ is, giving the Frobenius law~\eqref{eq:femlaw}.
\end{proof}

Interpreting the associative law for algebras as saying that $a\colon TA\to A$ is a homomorphism $(TA,\mu_A)\to (A,a)$, Lemmas~\ref{lem:femdag} and~\ref{lem:femlaw} show that this morphism is universal, in the sense that if its dagger is an algebra homomorphism, then so is the dagger of any other algebra homomorphism to $A$ (whose domain satisfies the Frobenius law).

\begin{theorem}\label{thm:comparison}
  Let $F$ and $G$ be dagger adjoints, and write $T=G\circ F$ for the induced dagger Frobenius monad. 
  There are unique dagger functors $K$ and $J$ making the following diagram commute.
  \[\begin{tikzpicture}[xscale=4,yscale=2]
  	\node (tl) at (-1,1) {$\Kl(T)$};
  	\node (t) at (0,1) {$\cat D$};
  	\node (tr) at (1,1) {$\FEM(T)$};
  	\node (b) at (0,0) {$\cat C$};
  	\draw[->, dashed] (tl) to node[above] {$K$} (t);
  	\draw[->, dashed] (t) to node[above] {$J$} (tr);
  	\draw[->] ([xshift=.2mm]t.south) to node[right] {$G$} ([xshift=.2mm]b.north);
  	\draw[->] ([xshift=-.2mm]b.north) to node[left] {$F$} ([xshift=-.2mm]t.south);
  	\draw[->] (tl.-45) to (b.135);
  	\draw[->] ([xshift=-.5mm,yshift=-.5mm]b.135) to ([yshift=-.5mm,xshift=-.5mm]tl.-45);
  	\draw[->] ([xshift=.5mm,yshift=-.5mm]tr.-135) to ([yshift=-.5mm,xshift=.5mm]b.45);
  	\draw[->] (b.45) to (tr.-135);
  \end{tikzpicture}\]
  Moreover, $J$ is full, $K$ is full and faithful, and $J\circ K$ is the canonical inclusion.
\end{theorem}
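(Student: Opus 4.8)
The plan is to run the classical comparison-functor construction and then check that the dagger, the extra adjunction carried by a dagger adjunction, and the Frobenius law make each piece work. Write $\eta$ and $\epsilon$ for the unit and counit of $F\dashv G$; since the adjunction preserves daggers we also have $G\dashv F$, with unit $\epsilon^\dag$ and counit $\eta^\dag$, and I will use naturality of all of these together with $\mu=G\epsilon F$.

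First, existence and uniqueness. Commutation with the two resolutions forces $K(A)=F(A)$ on objects and $K(f)=\epsilon_{FB}\circ F(f)$ for a Kleisli morphism $f\colon A\to T(B)$, and forces $J(D)=(GD,\,G\epsilon_D)$ with $J(g)=G(g)$; any commuting functors must be given by these formulas, so uniqueness is automatic and only well-definedness and functoriality remain, which is the usual computation with the triangle identities and naturality of $\epsilon$. I would then note these are dagger functors. For $J$ this is immediate: it acts as $G$ on morphisms, $G$ is a dagger functor, and the dagger on $\FEM(T)$ from Lemma~\ref{lem:femdag} is computed in $\cat C$. For $K$ it is a short calculation comparing $K(f)^\dag=F(f)^\dag\circ\epsilon_{FB}^\dag$ with $K$ applied to the Kleisli dagger $T(f^\dag)\circ\mu^\dag\circ\eta$ from Lemma~\ref{lem:kleislidagger}, again using the triangle identities and $\mu=G\epsilon F$.

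The first substantial point is that $J$ lands in $\FEM(T)$, not merely in the Eilenberg--Moore category: the algebra $(GD,G\epsilon_D)$ must satisfy the Frobenius law~\eqref{eq:femlaw}. By Lemma~\ref{lem:femlaw} it is enough to show $(G\epsilon_D)^\dag=G(\epsilon_D^\dag)$ is an algebra homomorphism $(GD,G\epsilon_D)\to(T(GD),\mu_{GD})$, that is, $\mu_{GD}\circ T\big(G\epsilon_D^\dag\big)=G(\epsilon_D^\dag)\circ G(\epsilon_D)$. Naturality of $\epsilon$ at the morphism $\epsilon_D^\dag\colon D\to FGD$ gives $\epsilon_{FGD}\circ FG(\epsilon_D^\dag)=\epsilon_D^\dag\circ\epsilon_D$, and applying $G$ is exactly the required equation. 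This uses only that $T$ is a dagger functor; specializing to $D=F(A)$ shows the free algebra $(TA,\mu_A)$ is FEM, which is the Frobenius law for $T$ itself, so this step also delivers the self-contained proof of Lemma~\ref{lem:daggeradjunction} promised earlier.

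Finally the three assertions. That $J\circ K$ is the canonical inclusion $\Kl(T)\to\FEM(T)$ of Example~\ref{ex:kleislifem} is direct: $J(K(A))=(TA,\mu_A)$, and $J(K(f))=G(\epsilon_{FB}\circ F f)=\mu_B\circ T f$ is the image of $f$ under the embedding. That $K$ is full and faithful is the adjunction bijection $\cat D(FA,FB)\cong\cat C(A,GFB)=\Kl(T)(A,B)$. The main obstacle is the fullness of $J$: given an FEM-homomorphism $h\colon(GD,a)\to(GD',a')$ with $a=G\epsilon_D$ and $a'=G\epsilon_{D'}$, I must produce $g\colon D\to D'$ with $G(g)=h$. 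Transposing across $G\dashv F$ reduces this to factoring $F(h)\circ\epsilon_D^\dag\colon D\to FGD'$ through the unit component $\epsilon_{D'}^\dag\colon D'\to FGD'$, the factor then being $g$. The obvious guess of composing with the counit instead, $g_0=\epsilon_{D'}\circ F(h)\circ\epsilon_D^\dag$, only gives $G(g_0)=h\circ(a a^\dag)$, correct up to the generally non-invertible self-adjoint map $a a^\dag=G(\epsilon_D\epsilon_D^\dag)$, so the Frobenius structure must be used to do better. Here both $a^\dag$ and $a'^\dag$ are algebra homomorphisms by Lemma~\ref{lem:femlaw}, so $h^\dag$ is one too by Lemma~\ref{lem:femdag}; I expect that combining the homomorphism identities for $h$ and $h^\dag$ with the Frobenius law~\eqref{eq:femlaw} for both $(GD,a)$ and $(GD',a')$ is what forces the factorization to exist, and that this is the genuinely delicate step where the Frobenius law is indispensable.
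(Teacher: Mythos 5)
Your completed steps are all correct, and where they overlap with the paper they essentially coincide with it. The formulas for $K$ and $J$, their uniqueness, the dagger checks, full faithfulness of $K$, and $J\circ K$ being the canonical inclusion are the classical comparison-functor facts plus routine dagger calculations, exactly as in the paper (which cites Mac~Lane for the classical part). Your direct proof that $J$ lands in $\FEM(T)$ --- apply $G$ to the naturality square of $\varepsilon$ at $\varepsilon_D^\dag$ and invoke Lemma~\ref{lem:femlaw} --- is the paper's own argument in light disguise: the paper observes that $a = J(\varepsilon_D)$, hence $a^\dag = J(\varepsilon_D^\dag)$ is a homomorphism $(GD, G\varepsilon_D)\to(TGD,\mu_{GD})$, which is the same computation. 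You are also right that this step delivers the self-contained proof of Lemma~\ref{lem:daggeradjunction} promised in the paper.

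The genuine gap is the one you flag yourself: fullness of $J$, which is one of the assertions of the theorem, is left as an expectation rather than proved. Your reduction is accurate: since homomorphisms out of free algebras are always in the image of $J$ (any homomorphism $p \colon J(FA) \to J(D)$ equals $G(\varepsilon_D \circ F(p \circ \eta_A))$, because $G\varepsilon_D \circ T(p) \circ T(\eta_A) = p \circ \mu_A \circ T(\eta_A) = p$; dually for free codomains, using daggers and Lemma~\ref{lem:femdag}), fullness amounts to writing every homomorphism $h \colon J(D) \to J(D')$ as $G\varepsilon_{D'} \circ T(m) \circ (G\varepsilon_D)^\dag$ for some $m$ in $\cat{C}$, i.e.\ factoring $h$ through a free algebra. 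But you should know that the paper does not prove this clause either: it folds fullness of $J$ into the appeal to the classical comparison theory, where fullness is simply not a theorem (the comparison functor of the discrete/forgetful adjunction between sets and topological spaces is not full). The only instance the paper's argument actually uses is for the homomorphism $a \colon (TGD,\mu_{GD}) \to (GD,G\varepsilon_D)$, which has free domain, and that instance is fine.

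Worse, no amount of cleverness with the Frobenius law can close the gap, because the claim is false without further hypotheses. Let $\cat{C}$ be the dagger category of matrices over $\mathbb{F}_2$ (dagger $=$ transpose), and $M = \mathbb{F}_2[\mathbb{Z}/2]$ with comultiplication the transpose of its multiplication; this is a dagger Frobenius monoid, so $T = -\otimes M$ is a dagger Frobenius monad. A FEM-algebra is exactly a representation $\rho$ of $\mathbb{Z}/2$ by symmetric matrices, and a homomorphism factors through a free algebra precisely when it has the form $\sum_{g} \rho_g \circ m \circ \rho_g$ for some linear map $m$; when both representations are trivial this sum is $m+m=0$, yet every linear map is a homomorphism. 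Now let $\cat{D}$ be the subcategory of $\FEM(T)$ with all objects, whose morphisms are the identities together with all composites of homomorphisms having free domain or codomain. This is a dagger category, the free and forgetful functors restrict to a dagger adjunction between $\cat{C}$ and $\cat{D}$ inducing $T$, and its (unique) comparison functor is the non-full inclusion $\cat{D} \hookrightarrow \FEM(T)$: a rank-one endomorphism of the two-dimensional trivial representation is a homomorphism not in $\cat{D}$. Fullness of $J$ does hold in $\cat{(F)Hilb}$ and $\cat{Rel}$, where the relevant transfer/averaging maps are surjective, but it is not a consequence of the axioms; so your instinct that this is the delicate point is right, and the honest conclusion is that neither your argument nor the paper's establishes it in general, because it cannot be established.
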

\begin{proof}
  It suffices to show that the comparisons $K \colon \Kl(T) \to \cat{D}$ and $J \colon \cat{D} \to \EM(T)$ (see~\cite[VI.3, IV.5]{maclane:categories}) are dagger functors, and that $J$ factors through $\FEM(T)$.

  Let $(A,a)$ be in the image of $J$. As $J\circ K$ equals the canonical inclusion, $(T(A),\mu_A)$ is also in the image of $J$. Because $J$ is full, the homomorphism $a\colon (TA,\mu_A)\to (A,a)$ is in the image of $J$, say $a=J(f)$. But then $G(f)=a$, and so $G(f^\dag)=a^\dag$. This implies that $a^\dag$ is a homomorphism $(A,a)\to (TA,\mu_A)$. Lemma~\ref{lem:femlaw} guarantees $(A,a)$ is in $\FEM(T)$. 
 
  Clearly $J$ is a dagger functor. It remains to show that $K$ is a dagger functor. As $K$ is full and faithful, Lemma~\ref{lem:daglem} gives $\Kl(T)$ a unique dagger making it a dagger functor. This also makes $J\circ K$ a dagger functor, and since $J\circ K$ equals the canonical inclusion, the induced dagger on $\Kl(T)$ must equal the canonical one from Lemma~\ref{lem:kleislidagger}.
\end{proof}

The previous theorem leads to a direct proof of Lemma~\ref{lem:daggeradjunction} above, as follows. The definition of $\FEM(T)$ makes sense for arbitrary monads (that might not satisfy the Frobenius law), and the proof above still shows that the image of $J \colon \cat{D} \to \EM(T)$ lands in $\FEM(T)$. Hence every free algebra is a FEM-algebra. This implies the Frobenius law~\eqref{eq:frobeniusformonads} for $T$.

\section{Formal monads on dagger categories}\label{sec:formal}

Both ordinary monads~\cite{street1972formal} and Frobenius monads~\cite{lauda:ambidextrous} have been treated formally in 2-categories. This section establishes the counterpart for dagger 2-categories. Its main contribution is to show that the category of FEM-algebras satisfies a similar universal property for dagger Frobenius monads as EM-algebras do for ordinary monads. 
Recall from Definition~\ref{def:dagtwocat} that a dagger 2-category is a category enriched in $\cat{DagCat}$.

\begin{definition} 
  An adjunction in a dagger 2-category is just an adjunction in the underlying 2-category. 
  A \emph{dagger 2-adjunction} consists of two $\cat{DagCat}$-enriched functors that form a 2-adjunction in the usual sense. 
\end{definition}

Adjunctions in a dagger 2-category need not specify left and right, just like the dagger adjunctions they generalize, but dagger 2-adjunctions need to specify left and right. 

\begin{definition} 
  Let \cat{C} be a dagger 2-category. 
  A \emph{dagger Frobenius monad} consists of an object $C$, a morphism $T\colon C\to C$, and 2-cells $\eta\colon \id[C]\to T$ and $\mu\colon T^2\to T$ that form a monad in the underlying 2-category of $\cat{C}$ and satisfy the Frobenius law
  \[
      \mu T\circ T\mu^\dag=T\mu\circ \mu^\dag T.
  \]
  A \emph{morphism of dagger Frobenius monads} $(C,S)\to (D,T)$ is a morphism $F\colon C\to D$ with a 2-cell $\sigma\colon TF\to FS$ making the following diagrams commute.
  \begin{equation}\label{diag:mapofFrobmonads}
    \begin{aligned}\begin{tikzpicture}
     \matrix (m) [matrix of math nodes,row sep=1em,column sep=2em]
     {
       & TF \\
      F \\
       & FS \\};
     \tikzset{font=\small};
     \path[->]
     (m-2-1) edge node [below=1ex,left] {$F\eta^S$} (m-3-2)
             edge node [above=1ex,left] {$\eta^TF$} (m-1-2)
     (m-1-2) edge node [left] {$\sigma$} (m-3-2);
    \end{tikzpicture}\end{aligned}
    \begin{aligned}\begin{tikzpicture}
     \matrix (m) [matrix of math nodes,row sep=1em,column sep=2em]
     {
       & TFS & FSS \\
      TTF & & \\
       & TF & FS \\};
     \tikzset{font=\small};
     \path[->]
     (m-2-1) edge node [below=1ex,left] {$\mu^T F$} (m-3-2)
             edge node [above=1ex,left] {$T\sigma$} (m-1-2)
     (m-1-2) edge node [above] {$\sigma S$} (m-1-3)
     (m-1-3) edge node [left] {$F\mu^S$} (m-3-3)
     (m-3-2) edge node [below] {$\sigma$} (m-3-3);
    \end{tikzpicture}\end{aligned}
    \begin{aligned}\begin{tikzpicture}
     \matrix (m) [matrix of math nodes,row sep=1em,column sep=2em]
     {
       & FSS & FS \\
      TFS & & \\
       & TTF & TF \\};
     \tikzset{font=\small};
     \path[->]
     (m-2-1) edge node [below=1ex,left] {$T\sigma^\dag$} (m-3-2)
             edge node [above=1ex,left] {$\sigma S$} (m-1-2)
     (m-1-2) edge node [above] {$F\mu^S$} (m-1-3)
     (m-1-3) edge node [left] {$\sigma^\dag$} (m-3-3)
     (m-3-2) edge node [below] {$\mu^T F$} (m-3-3);
    \end{tikzpicture}\end{aligned}
  \end{equation}
  A \emph{transformation between morphisms of dagger Frobenius monads} $(F,\sigma) \to (G,\tau)$ is a 2-cell $\phi\colon F\to G$ making the following diagrams commute.
  \[
    \begin{tikzpicture}
     \matrix (m) [matrix of math nodes,row sep=2em,column sep=4em,minimum width=2em]
     {
      TF & TG & \\
      FS & GS \\};
     \path[->]
     (m-1-1) edge node [left] {$\sigma$} (m-2-1)
             edge node [above] {$T\phi$} (m-1-2)
     (m-2-1) edge node [below] {$\phi S$} (m-2-2)
     (m-1-2) edge node [right] {$\tau$} (m-2-2);
    \end{tikzpicture}
  \quad
    \begin{tikzpicture}
     \matrix (m) [matrix of math nodes,row sep=2em,column sep=4em,minimum width=2em]
     {
      TG & TF & \\
      GS & FS \\};
     \path[->]
     (m-1-1) edge node [left] {$\sigma$} (m-2-1)
             edge node [above] {$T\phi^\dag$} (m-1-2)
     (m-2-1) edge node [below] {$\phi ^\dag S$} (m-2-2)
     (m-1-2) edge node [right] {$\tau$} (m-2-2);
    \end{tikzpicture}
  \]
  Define the composition of morphisms to be $(G,\tau) \circ (F,\sigma) = (GF,G\sigma \circ \tau F)$.
  Horizontal and vertical composition of 2-cells in $\cat{C}$ define horizontal and vertical composition of transformations of morphisms of dagger Frobenius monads, and the dagger on 2-cells of $\cat{C}$ gives a dagger on these transformations. 
  This forms a dagger 2-category $\DFMonad(\cat{C})$.
\end{definition}

Omitting the third diagram of~\eqref{diag:mapofFrobmonads} gives the usual definition of a monad morphism.
We require this coherence with the dagger for the following reason: just as the first two diagrams of~\eqref{diag:mapofFrobmonads} ensure that $(C,T)\mapsto \EM(T)$ is a 2-functor $\mathrm{Monad}(\cat{Cat}) \to \cat{Cat}$, the third one ensures that $(C,T) \mapsto \FEM(T)$ is a dagger 2-functor $\DFMonad(\cat{DagCat}) \to \cat{DagCat}$.

There is an inclusion dagger 2-functor $\cat{C}\to\DFMonad(\cat{C})$ given by $C\mapsto (C,\id[C])$, $F\mapsto (F,\id)$, and $\psi\mapsto \psi$. There is also a forgetful dagger 2-functor $\DFMonad(\cat{C})\to \cat{C}$ given by $(C,T)\mapsto C$, $(F,\sigma)\mapsto F$, and $\psi\mapsto\psi$. As with the formal theory of monads~\cite{street1972formal}, the forgetful functor is the left adjoint to the inclusion functor. To see this, it suffices to exhibit a natural isomorphism of dagger categories $[C,D]\cong [(C,T),(D,\id[D])]$, such as  sending $F\colon C\to D$ to $(F,F\eta^T)$ and $\psi$ to $\psi$. 

\begin{definition}
  A dagger 2-category \cat{C} \emph{admits the construction of FEM-algebras} when the inclusion $\cat{C}\to\DFMonad(\cat{C})$ has a right adjoint $\FEM \colon \DFMonad(\cat{C}) \to \cat{C}$. 
\end{definition}

We will abbreviate $\FEM(C,T)$ to $\FEM(T)$ when no confusion can arise.

\begin{theorem} 
  If \cat{C} admits the construction of FEM-algebras, then dagger Frobenius monads factor as dagger adjunctions.
\end{theorem}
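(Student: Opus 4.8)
The plan is to mimic the formal theory of monads~\cite{street1972formal}, with $\FEM$-algebras playing the role of the Eilenberg--Moore object, and to check that the single genuinely new ingredient—the third coherence diagram of~\eqref{diag:mapofFrobmonads}—is supplied by Lemma~\ref{lem:extendedfrobeniuslaw}. Fix a dagger Frobenius monad $(C,T,\eta,\mu)$ in $\cat C$, and write $\mathrm{inc}\colon\cat C\to\DFMonad(\cat C)$ for the inclusion and $A=\FEM(C,T)$ for the value of its right adjoint. The counit of $\mathrm{inc}\dashv\FEM$ at $(C,T)$ is a morphism of dagger Frobenius monads $(U,\beta)\colon (A,\id[A])\to (C,T)$, and its underlying $1$-cell $U\colon A\to C$ is the forgetful morphism.

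First I would exhibit a free morphism in the opposite direction. The pair $(T,\mu)$ is a morphism of dagger Frobenius monads $(C,\id[C])\to (C,T)$: the first two diagrams of~\eqref{diag:mapofFrobmonads} are the associativity and unit laws of the monad, while the third diagram, once $S=\id[C]$, $F=T$ and $\sigma=\mu$ are substituted, reads exactly $\mu^\dag\circ\mu=\mu T\circ T\mu^\dag$, which is Lemma~\ref{lem:extendedfrobeniuslaw}. Transposing $(T,\mu)$ across $\mathrm{inc}\dashv\FEM$ yields a $1$-cell $F\colon C\to A$, and the composition rule $(G,\tau)\circ(F,\sigma)=(GF,G\sigma\circ\tau F)$ gives $(U,\beta)\circ(F,\id)=(UF,\beta F)$, so that $(UF,\beta F)=(T,\mu)$; in particular $UF=T$.

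Next I would upgrade $F$ and $U$ to an adjunction $F\dashv U$ inducing $(T,\eta,\mu)$, working hom-categorywise. For each object $X$ the isomorphism $\cat C(X,A)\cong\DFMonad(\cat C)(\mathrm{inc}\,X,(C,T))$ is $2$-natural in $X$ and sends $g\colon X\to A$ to $(Ug,\beta g)$, so that postcomposition with $U$ corresponds to the forgetful functor on the right-hand side. That forgetful functor has a left adjoint sending an arbitrary $1$-cell $h\colon X\to C$ to $(Th,\mu h)$; here $(Th,\mu h)$ again lies in $\DFMonad(\cat C)(\mathrm{inc}\,X,(C,T))$ precisely because the third coherence diagram is Lemma~\ref{lem:extendedfrobeniuslaw} whiskered by $h$. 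Transporting this free--forgetful adjunction across the isomorphism makes $F\circ(-)$ left adjoint to $U\circ(-)$ $2$-naturally in $X$, which by the usual representability argument is $F\dashv U$ in $\cat C$, with unit $\eta$ and induced multiplication $\mu$.

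Finally, because $\cat C$ is a dagger $2$-category, the adjunction $F\dashv U$ automatically goes both ways: daggering the $2$-cells turns the unit and counit of $F\dashv U$ into the counit and unit of $U\dashv F$. Thus $F\dashv U$ is an adjunction in the dagger $2$-category, \ie a dagger adjunction, resolving the given monad. The step I expect to be the main obstacle is the $2$-dimensional bookkeeping of the previous paragraph: one must confirm that the hom-category adjunctions assemble $2$-naturally and that the transported unit really is the monad unit $\eta$, rather than merely reproving the classical construction pointwise. All the dagger-specific content, by contrast, is concentrated in the single observation that Lemma~\ref{lem:extendedfrobeniuslaw} is exactly the third coherence diagram, which is what makes this resolution land in $\FEM$ rather than, as in the non-dagger case, in $\EM$.
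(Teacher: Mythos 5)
Your proof is correct, and its first half coincides with the paper's: both identify $(T,\mu)$ as a morphism of dagger Frobenius monads $(C,\id)\to(C,T)$ --- with Lemma~\ref{lem:extendedfrobeniuslaw} supplying exactly the third diagram of~\eqref{diag:mapofFrobmonads} --- and transpose it across $\mathrm{inc}\dashv\FEM$ to get $F$ (the paper's $J$) with $UF=T$ and $\beta F=\mu$. Where you genuinely diverge is in how the adjunction $F\dashv U$ is produced. The paper follows Street's direct construction: it verifies that the 2-cell $\beta$ (the paper's $\varepsilon$) is a transformation of morphisms of dagger Frobenius monads $(UFU,\mu U)\to(U,\beta)$ --- which is where the second and third diagrams of~\eqref{diag:mapofFrobmonads} for the counit get used --- then obtains the counit $\xi$ with $U\xi=\beta$ from the universal property and checks both triangle identities by hand. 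You instead set up free--forgetful adjunctions on the hom-categories $\DFMonad(\cat{C})(\mathrm{inc}\,X,(C,T))$ and conclude by 2-naturality and representability, so the triangle identities and the identification of the unit come out of the transport rather than being verified separately; this is a clean and more conceptual packaging (it exhibits $\FEM(T)$ as classifying FEM-algebras for $T\circ-$ in hom-categories), at the cost of one dagger-specific check you leave implicit: a morphism of $\DFMonad(\cat{C})(\mathrm{inc}\,X,(C,T))$ must have \emph{both} $\phi$ and $\phi^\dag$ algebra homomorphisms, so to see that the ordinary Eilenberg--Moore free--forgetful adjunction restricts to this category you need either the hom-category analogue of Lemma~\ref{lem:femdag} (homomorphisms between FEM-algebras are closed under dagger), or, in unit--counit form, the observation that the counit $\sigma\colon(Th,\mu h)\to(h,\sigma)$ is a legitimate morphism precisely because $(h,\sigma)$ satisfies the third coherence diagram (cf.\ Lemma~\ref{lem:femlaw}). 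Finally, your closing step is not needed: by the paper's definition, an adjunction in a dagger 2-category is just an adjunction in the underlying 2-category, so $F\dashv U$ already is the required dagger adjunction --- though your remark that daggering the unit and counit yields $U\dashv F$ is correct, and explains why no choice of left and right is involved.
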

\begin{proof} 
  We closely follow the proof of the similar theorem for ordinary monads in~\cite{street1972formal}, but need to verify commutativity of some additional diagrams. 
  To verify that $(T,\mu)$ is a morphism of dagger Frobenius monads $(C,\id)\to (C,T)$, the first two diagrams are exactly as for ordinary monads, and the third diagram commutes by Lemma~\ref{lem:extendedfrobeniuslaw}.
  Denoting the counit of the adjunction of the assumption by $(E,\varepsilon)\colon (\FEM(T),\id)\to (C,T)$, the universal property gives a unique morphism $(J,\id)\colon (C,\id)\to (\FEM(T),\id)$ making the following diagram commute.
  \[
    			\begin{tikzpicture}
			     \matrix (m) [matrix of math nodes,row sep=2em,column sep=4em,minimum width=2em]
			     {
			       (C,\id)&  & (C,T) \\
			       & (\FEM(T),\id ) \\};
			     \path[->]
			     (m-1-1) edge node [below=1ex,left] {$(J,\id)$} (m-2-2)
			             edge node [above] {$(T,\mu)$} (m-1-3)
			     (m-2-2) edge node [below=1ex,right] {$(E,\varepsilon)$} (m-1-3);
			    \end{tikzpicture}
  \]
  Thus $T=EJ$ and $\mu=\varepsilon J$. Next we verify $\varepsilon$ is a transformation of morphisms of dagger Frobenius monads $(EJE,\mu E)\to (E,\varepsilon)$, by showing the following diagrams commute.
  \[
		    \begin{tikzpicture}
		     \matrix (m) [matrix of math nodes,row sep=2em,column sep=4em,minimum width=2em]
		     {
		      TEJE & TE \\
		      TE & E \\};
		     \path[->]
		     (m-1-1) edge node [left] {$\mu E$} (m-2-1)
		             edge node [above] {$T\varepsilon$} (m-1-2)
		     (m-2-1) edge node [below] {$\varepsilon$} (m-2-2)
		     (m-1-2) edge node [right] {$\varepsilon$} (m-2-2);
		    \end{tikzpicture}
		  \qquad
		    \begin{tikzpicture}
		     \matrix (m) [matrix of math nodes,row sep=2em,column sep=4em,minimum width=2em]
		     {
		      TE & TEJE \\
		      E & TE \\};
		     \path[->]
		     (m-1-1) edge node [left] {$\varepsilon$} (m-2-1)
		             edge node [above] {$T\varepsilon^\dag$} (m-1-2)
		     (m-2-1) edge node [below] {$\varepsilon^\dag$} (m-2-2)
		     (m-1-2) edge node [right] {$\mu E$} (m-2-2);
		    \end{tikzpicture}
  \]
  These diagrams are instances of the second and third equations of~\eqref{diag:mapofFrobmonads}.

  The rest of the proof proceeds exactly as in~\cite{street1972formal}. As $\varepsilon$ is a transformation, the adjunction gives a unique 2-cell $\xi\colon JE\to \id[\FEM(T)]$ in \cat{C} with $E\xi=\varepsilon$. Now 
  \begin{align*}
 	E(\xi J\circ J\eta)
 	=E\xi J\circ EJ\eta 
	=\varepsilon J\circ T\eta 
	=\mu\circ T\eta=\id,
  \end{align*}
  and so $\xi J \circ J\eta =\id$ by the universal property of the counit. 
  Furthermore
  \[
		 	E\xi \circ \eta E=\varepsilon \circ \eta E=\id,
  \] 
  so $(E,J,\mu,\xi)$ is an adjunction generating $(C,T)$.
\end{proof}

\begin{theorem} 
  The dagger 2-category \cat{DagCat} admits the construction of FEM-algebras.
\end{theorem}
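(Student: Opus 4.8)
The plan is to exhibit the right adjoint $\FEM \colon \DFMonad(\cat{DagCat}) \to \cat{DagCat}$ on objects by $(C,T) \mapsto \FEM(T)$, the dagger category of FEM-algebras, whose dagger is supplied by Lemma~\ref{lem:femdag}. Since everything is enriched in $\cat{DagCat}$, verifying the 2-adjunction amounts to producing, for each dagger category $D$ and each dagger Frobenius monad $(C,T)$, a natural isomorphism of dagger categories
\[
  \DFMonad(\cat{DagCat})\big((D,\id[D]),(C,T)\big)\;\cong\;[D,\FEM(T)].
\]
I would establish this by exhibiting a universal counit and reading off the bijection, checking compatibility with daggers at the object, morphism, and 2-cell levels.

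First I would unpack a morphism of dagger Frobenius monads $(F,\sigma)\colon(D,\id[D])\to(C,T)$. Because $S=\id[D]$, the three diagrams of~\eqref{diag:mapofFrobmonads} simplify: the first two say exactly that each $(F(d),\sigma_d)$ is an Eilenberg--Moore algebra, naturally in $d$, while the third becomes $\sigma_d^\dag\circ\sigma_d=\mu_{F(d)}\circ T(\sigma_d^\dag)$, which by Lemma~\ref{lem:femlaw} is precisely the statement that each $(F(d),\sigma_d)$ is a FEM-algebra. Naturality of $\sigma$ asserts that each $F(g)$ is an algebra homomorphism, so $(F,\sigma)$ assembles into a functor $\bar F\colon D\to\FEM(T)$, $d\mapsto(F(d),\sigma_d)$. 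As $F$ is a dagger functor and the dagger on $\FEM(T)$ is inherited from $C$ by Lemma~\ref{lem:femdag}, $\bar F$ is a dagger functor; conversely every dagger functor $D\to\FEM(T)$ arises this way by composing with the forgetful functor and extracting the structure maps. These two constructions are mutually inverse, giving the object part of the bijection.

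Next I would identify the counit as $(U,\epsilon)\colon(\FEM(T),\id)\to(C,T)$, where $U\colon\FEM(T)\to C$ is the forgetful dagger functor and $\epsilon$ has components $\epsilon_{(A,a)}=a$. Checking that this is a morphism of dagger Frobenius monads is the heart of the argument: the first two diagrams of~\eqref{diag:mapofFrobmonads} hold by the standard Eilenberg--Moore facts (the unit and associativity laws), and the third, namely $\epsilon^\dag\circ\epsilon=\mu^TU\circ T\epsilon^\dag$, holds because each object of $\FEM(T)$ is a FEM-algebra, which by Lemma~\ref{lem:femlaw} is exactly its content. The factorization $(U,\epsilon)\circ(\bar F,\id)=(F,\sigma)$ and its uniqueness are then immediate, since $U\bar F=F$ forces $\bar F$ on underlying data and $\epsilon\bar F=\sigma$ recovers the structure maps.

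Finally I would treat the 2-cells. A transformation $(F,\sigma)\to(G,\tau)$ is a natural transformation $\phi\colon F\to G$ subject to two diagrams; with $S=\id[D]$ the first says each $\phi_d$ is an algebra homomorphism $(F(d),\sigma_d)\to(G(d),\tau_d)$, \ie exactly a morphism $\bar F\to\bar G$ in $[D,\FEM(T)]$, and the second is its dagger counterpart, automatically satisfied once the first is, by Lemma~\ref{lem:femdag}. As the dagger on these transformations is taken componentwise, it matches the dagger on $[D,\FEM(T)]$, so the bijection is an isomorphism of dagger categories, with naturality in $D$ and $(C,T)$ routine. The main obstacle is precisely the verification that the third coherence diagram of~\eqref{diag:mapofFrobmonads} coincides with the FEM-law for the counit; this is where the Frobenius hypothesis is indispensable, and it is exactly what Lemmas~\ref{lem:femdag} and~\ref{lem:femlaw} were engineered to supply.
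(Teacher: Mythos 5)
Your proposal is correct, and its mathematical core coincides with the paper's: both arguments hinge on unpacking a morphism $(F,\sigma)\colon(D,\id[D])\to(C,T)$ of dagger Frobenius monads into a dagger functor $D\to\FEM(T)$ --- the first two diagrams of~\eqref{diag:mapofFrobmonads} giving the Eilenberg--Moore laws, the third giving the FEM law via Lemma~\ref{lem:femlaw} --- and on Lemma~\ref{lem:femdag} to make the second 2-cell condition automatic (you spell this unpacking out more explicitly than the paper does). The packaging differs, however. The paper first extends $\FEM$ to a dagger 2-functor on all of $\DFMonad(\cat{DagCat})$, defining it on a general 1-cell $(F,\sigma)\colon(\cat{C},S)\to(\cat{D},T)$ by $(A,a)\mapsto (FA,\, F(a)\circ\sigma_A)$ and on 2-cells componentwise, and only then exhibits the isomorphism $[\cat{C},\FEM(T)]\cong[(\cat{C},\id[\cat{C}]),(\cat{D},T)]$; you instead define $\FEM$ on objects only and recover its 2-functoriality from the universal counit. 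Your route is more economical --- you never need to check that $\FEM(F,\sigma)$ sends FEM-algebras to FEM-algebras --- but it silently invokes the $\cat{DagCat}$-enriched version of the fact that object-wise universal arrows (equivalently, object-wise representability with isomorphisms enriched-natural in the other variable) assemble into an enriched right adjoint; that fact is standard, but it should be stated, since the content of the theorem is precisely an enriched adjunction rather than an ordinary one, and your ``naturality \ldots routine'' remark is carrying that weight. Conversely, the paper's explicit construction yields a usable description of $\FEM$ on 1-cells and 2-cells, while your concrete verification that $(U,\epsilon)$ is a 1-cell of $\DFMonad(\cat{DagCat})$ is a worthwhile complement: the paper never performs it in this proof, although the preceding theorem assumes such a counit abstractly.
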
 
\begin{proof}
  We first show $(\cat{C},T)\mapsto (\FEM(T))$ extends to a well-defined dagger 2-functor $\FEM \colon \DFMonad(\cat{DagCat})\to\cat{DagCat}$. On a 1-cell $(F,\sigma)\colon(\cat{C},S)\to (\cat{D},T)$, define a dagger functor $\FEM(F,\sigma)\colon \FEM(S)\to \FEM(T)$ as follows: map the algebra $a\colon SA\to A$ to $F(a)\circ \sigma_A\colon TFA\to FSA\to FA$ and the homomorphism $f\colon (A,a)\to (B,b)$ to $F(f)$. The laws for FEM-algebras of $(F,\sigma)$ now show that $\FEM(F,\sigma)$ sends FEM-algebras to FEM-algebras. 
  On a 2-cell $\psi\colon (F,\sigma)\to (G,\tau)$, define $\FEM(\psi)$ by $\FEM(\psi)_A=\psi_A$; each $\psi_A$ is a morphism of FEM-algebras.

  There is a natural isomorphism of dagger categories $[\cat{C},\FEM(T)]\cong [(\cat{C},\id[\cat{C}]),(\cat{D},T)]$ as follows. 
  A dagger functor $\cat{C}\to \FEM(T)$ consists of a dagger functor $F\cat{C}\to \cat{D}$ and a family of maps $\sigma_A\colon TFA\to FA$ making each $(FA,\sigma_A)$ into a FEM-algebra. 
  Then $(F,\sigma)\colon(\cat{C},\id[\cat{C}])\to (\cat{D},T)$ is a well-defined morphism of dagger Frobenius monads. Similarly, any $(F,\sigma)\colon(\cat{C},\id[\cat{C}])\to (\cat{D},T)$ defines a dagger functor $\cat{C}\to \FEM(T)$. On the level of natural transformations both of these operations are obvious. 
\end{proof}

In~\cite{street1972formal} this last result is proved as follows: instead of starting with the definition of $\cat{C}^T$, the construction is recovered from the assumption that the right adjoint exists and considering functors from the categories \cat{1} and \cat{2} to the right adjoint, thus recovering the objects and arrows of $\cat{C}^T$. It is unclear how to write a similar proof in our case: while $[\cat{2},\cat{C}]$ classifies arrows and commutative squares for an ordinary category, it is not obvious how to replace \cat{2} with a dagger category playing an analogous role.

\section{Strength}\label{sec:strength}

As we saw in Example~\ref{example:tensor}, (Frobenius) monoids in a monoidal dagger category induce (dagger Frobenius) monads on the category. This in fact sets up an adjunction between monoids and strong monads~\cite{wolff:monads}. This section shows that the Frobenius law promotes this adjunction to an equivalence. Most of this section generalizes to the non-dagger setting.

\begin{definition}\label{def:strength}
  A dagger functor $F$ between monoidal dagger categories is \emph{strong} if it is equipped with natural unitary morphisms
  $\st_{A,B} \colon A \otimes F(B) \to F(A\otimes B)$ satisfying $\st \circ \alpha = F(\alpha) \circ \st \circ (\id \otimes \st)$ and $F(\lambda) \circ \st = \lambda$.
  A (dagger Frobenius) monad on a monoidal dagger category is \emph{strong} if it is a strong dagger functor with $\st \circ (\id \otimes \mu) = \mu \circ T(\st) \circ \st$ and $\st \circ (\id \otimes \eta) = \eta$.
  A morphism $\beta$ of dagger Frobenius monads is \emph{strong} if $\beta \circ \st = \st \circ (\id \otimes \beta)$.
\end{definition} 

To prove the equivalence between dagger Frobenius monoids and strong dagger Frobenius monads, we need two lemmas.
 
\begin{lemma}\label{lem:froblaw} 
  If $T$ is a strong dagger Frobenius monad on a monoidal dagger category, then $T(I)$ is a dagger Frobenius monoid.
\end{lemma}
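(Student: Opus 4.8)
The plan is to equip $T(I)$ with the evident monoid structure coming from the strength, declare its comonoid structure to be given by daggers, and then deduce the Frobenius law~\eqref{eq:frobeniuslaw} for this monoid from the Frobenius law~\eqref{eq:frobeniusformonads} for the monad. Concretely, I would take the unit to be $\tinyunit = \eta_I \colon I \to T(I)$ and the multiplication to be the composite
\[
  \tinymult \;=\; \Big( T(I)\otimes T(I) \xrightarrow{\;\st_{T(I),I}\;} T\big(T(I)\otimes I\big) \xrightarrow{\;T(\rho_{T(I)})\;} T^2(I) \xrightarrow{\;\mu_I\;} T(I) \Big),
\]
so that $\tinycomult = \tinymult^\dag$ and $\tinycounit = \tinyunit^\dag$ are forced. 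It is convenient to record that, since $T$ is a strong dagger functor and $\rho$ is unitary, the map $\phi_A = T(\rho_A)\circ\st_{A,I}\colon A\otimes T(I)\to T(A)$ is a unitary natural isomorphism; in particular $\tinymult = \mu_I\circ\phi_{T(I)}$, and because $\st$ and $\rho$ are unitary and $T$ preserves daggers, $\tinymult^\dag = \phi_{T(I)}^{-1}\circ\mu_I^\dag$.

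First I would check associativity and the two unit laws. These are exactly what makes $T(I)$ into a monoid for any strong monad, and they follow by a routine diagram chase from naturality of $\st$ together with the coherence axioms $\st\circ(\id\otimes\mu)=\mu\circ T(\st)\circ\st$ and $\st\circ(\id\otimes\eta)=\eta$ of Definition~\ref{def:strength}. No dagger is needed here, which is why this part of the argument survives in the non-dagger setting.

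The substance of the lemma is the Frobenius law. Here I would rewrite both sides of~\eqref{eq:frobeniuslaw}, namely $(\tinymult\otimes\id)\circ(\id\otimes\tinymult^\dag)$ and $(\id\otimes\tinymult)\circ(\tinymult^\dag\otimes\id)$, by substituting $\tinymult=\mu_I\circ\phi_{T(I)}$ and $\tinymult^\dag=\phi_{T(I)}^{-1}\circ\mu_I^\dag$ and pushing the strength isomorphisms through the tensor factors using naturality of $\st$ and the multiplicativity coherence. The goal is to strip away the structural isomorphisms and exhibit the two sides as the conjugates, by the unitary $\phi_{T(I)}$, of the two sides $T(\mu_I)\circ\mu^\dag_{T(I)}$ and $\mu_{T(I)}\circ T(\mu_I^\dag)$ of the monad Frobenius law~\eqref{eq:frobeniusformonads} at $A=I$; the unitarity of $\st$, hence of $\phi$, is precisely what lets the auxiliary isomorphisms introduced on the two sides cancel rather than merely match. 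Lemma~\ref{lem:extendedfrobeniuslaw} may shorten this, since its alternative form $\mu^\dag\circ\mu=\mu T\circ T\mu^\dag$ is sometimes better adapted to the way the strength reshuffles the factors.

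I expect the bookkeeping of the strength maps to be the main obstacle. Unlike the monoid and comonoid operations, the strength is not a primitive of the string-diagram calculus but a structural natural transformation, so each use of naturality and of the coherence axioms must be tracked by hand. The delicate point is to ensure that, after conjugation by $\phi_{T(I)}$, the left and right tensor slots of the monoid Frobenius law land on the $T(\mu_I)\circ\mu^\dag_{T(I)}$ and $\mu_{T(I)}\circ T(\mu_I^\dag)$ sides in the correct order, so that~\eqref{eq:frobeniusformonads} applies directly.
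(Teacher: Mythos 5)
Your proposal is correct and follows essentially the same route as the paper: the paper's proof is precisely a large diagram chase (its Figure~\ref{fig:froblaw}) whose outer boundary is the monoid Frobenius law for $T(I)$, whose central region is the monad Frobenius law~\eqref{eq:frobeniusformonads} at $A=I$, and whose intermediate regions carry out exactly the conjugation by the unitary $T(\rho)\circ\st$ that you describe, using naturality of $\st$, $\st^\dag$, $\rho$, $\mu^\dag$ and the strong-monad coherence axioms. The unitarity of the strength is indeed essential here, just as you note, and the paper itself remarks after Remark~\ref{rem:ratherstrong} that its proof uses invertibility of the strength maps in an essential way.
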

\begin{proof}
  Consider the diagram in \figurename~\ref{fig:froblaw}. 
  Region (i) commutes because $T$ is a dagger Frobenius monad, (ii) because $\mu^\dag$ is natural, (iii) because $\rho^{-1}$ is natural, (iv) because $\st^\dag$ is natural, (v) is a consequence of $T$ being a strong monad, (vi) commutes as $\rho$ is natural, (vii) and (viii) because $\st$ is natural, (ix) commutes trivially and (x) because $\st$ is natural. Regions (ii)'-(x)' commute for dual reasons. 
  Hence the outer diagram commutes.
\end{proof}

\begin{figure*}
  \centering
  \begin{sideways}
  \begin{tikzpicture}[font=\footnotesize,xscale=.825,yscale=.825]
  \matrix (m) [matrix of math nodes,row sep=2em,column sep=1.75em]
  {
     T(I)\otimes T(T(I)\otimes I) & T(I)\otimes (T(I)\otimes T(I)) & & (T(I)\otimes T(I))\otimes T(I) & T(T(I)\otimes I)\otimes T(I)) \\
     &T(T(I)\otimes (T(I)\otimes I)) & T((T(I)\otimes T(I))\otimes I) & T(T(I)\otimes I)\otimes T(I) & \\
     T(I)\otimes T^2(I) & T(T(I)\otimes T(I)) & T(T(T(I)\otimes I)\otimes I) & T(T^2(I)\otimes I) & T^2(I)\otimes T(I) \\
     &T^2(T(I)\otimes I) &T^3(I) & T(T(I)\otimes I)& \\ 
     T(I)\otimes T(I) & T^2(I) & &T^2(I) & T(I)\otimes T(I) \\
     & T(T(I)\otimes I) & T^3(I) & T^2(T(I)\otimes I) & \\
     T^2(I)\otimes T(I) & T(T^2(I)\otimes I) & T(T(T(I)\otimes I)\otimes I) & T(T(I)\otimes T(I)) & T(I)\otimes T^2(I) \\
     & T(T(I)\otimes I)\otimes T(I) & T((T(I)\otimes T(I))\otimes I) & T(T(I)\otimes (T(I)\otimes I)) & \\
     T(T(I)\otimes I)\otimes T(I) & (T(I)\otimes T(I))\otimes T(I) & & T(I)\otimes (T(I)\otimes T(I)) & T(I)\otimes T(T(I)\otimes I) \\};
   \path[->]
    (m-1-1) edge node [above] {$\id\otimes \st^\dag$} (m-1-2)
    (m-1-2) edge node [above] {$\alpha$} (m-1-4)
    (m-1-4) edge node [above] {$\st\otimes\id$} (m-1-5)
    (m-1-5) edge node [right] {$T(\rho)\otimes\id$} (m-3-5)
    (m-1-1) edge node [above] {$\st$} (m-2-2)
    (m-2-2) edge node [above] {$T(\alpha)$} (m-2-3)
    (m-2-3) edge node [above] {$\st^\dag$} (m-1-4)
    (m-2-4) edge node [right] {$\st^\dag\otimes\id$} (m-1-4) 
    (m-2-4) edge node [above] {$\quad\quad T(\rho)\otimes \id$} (m-3-5)
    (m-3-1) edge node [above] {$\st$} (m-3-2)
    (m-3-1) edge node [left] {$\id\otimes T(\rho^{-1})$} (m-1-1)
    (m-3-2) edge node [left] {$T(\id\otimes\rho^{-1})$} (m-2-2)
    (m-3-2) edge node [below] {$\quad T(\rho^{-1})$} (m-2-3)
    (m-3-3) edge node [above] {$T(T(\rho)\otimes\id)$} (m-3-4)
    (m-3-3) edge node [right] {$T(\st^\dag\otimes\id)$} (m-2-3)
    (m-3-3) edge node [right] {$\st^\dag$} (m-2-4)
    (m-3-4) edge node [above] {$\st^\dag$} (m-3-5)
    (m-3-4) edge node [right] {$T(\mu\otimes\id)$} (m-4-4)
    (m-3-5) edge node [right] {$\mu\otimes\id$} (m-5-5)
    (m-4-2) edge node [left] {$T(\st^\dag)$} (m-3-2)
    (m-4-2) edge node [right] {$\quad T(\rho^{-1})$} (m-3-3)
    (m-4-2) edge node [above] {$T^2(\rho)$} (m-4-3)
    (m-4-3) edge node [above] {$T(\mu_I)$} (m-5-4)
    (m-4-4) edge node [above] {$\st^\dag$} (m-5-5)
    (m-5-1) edge node [above] {$\st$} (m-6-2)
    (m-5-1) edge node [left] {$\id\otimes\mu^\dag$} (m-3-1)
    (m-5-1) edge node [left] {$\mu^\dag\otimes\id$} (m-7-1)
    (m-5-2) edge node [above] {$\mu^\dag_{T(I)}$} (m-4-3)
    (m-5-2) edge node [above] {$T(\mu^\dag_I)$} (m-6-3)
    (m-5-4) edge node [left] {$T(\rho^{-1})$} (m-4-4)
    (m-6-2) edge node [right] {$T(\rho)$} (m-5-2)
    (m-6-2) edge node [left] {$T(\mu^\dag\otimes\id)$} (m-7-2)
    (m-6-3) edge node [above] {$\mu_{T(I)}$} (m-5-4)
    (m-6-3) edge node [above] {$\quad T^2(\rho^{-1})$} (m-6-4)
    (m-7-1) edge node [above] {$\st$} (m-7-2)
    (m-7-1) edge node [left] {$T(\rho^{-1})\otimes\id$} (m-9-1)
    (m-7-1) edge node [above] {$\ \ \quad T(\rho^{-1})\otimes\id$} (m-8-2)
    (m-7-2) edge node [above] {$T(T(\rho^{-1})\otimes\id)$} (m-7-3)
    (m-7-3) edge node [above] {$T(\rho)$} (m-6-4)
    (m-7-4) edge node [right] {$T(\st)$} (m-6-4)
    (m-7-4) edge node [above] {$\st^\dag$} (m-7-5)
    (m-7-5) edge node [right] {$\id\otimes\mu$} (m-5-5)
    (m-8-2) edge node [above] {$\st$} (m-7-3)
    (m-8-3) edge node [right] {$T(\st\otimes\id)$} (m-7-3)
    (m-8-3) edge node [right] {$\quad T(\rho)$} (m-7-4)
    (m-8-3) edge node [above] {$T(\alpha^{-1})$} (m-8-4)
    (m-8-4) edge node [right] {$T(\id\otimes\rho)$} (m-7-4)
    (m-8-4) edge node [above] {$\ \ \st^\dag$} (m-9-5)
    (m-9-1) edge node [below] {$\st^\dag\otimes\id $} (m-9-2)
    (m-9-2) edge node [left] {$\st\otimes\id $} (m-8-2)
    (m-9-2) edge node [above] {$\st$} (m-8-3)
    (m-9-2) edge node [below] {$\alpha^{-1}$} (m-9-4)
    (m-9-4) edge node [below] {$\id\otimes \st$} (m-9-5)
    (m-9-5) edge node [right] {$\id\otimes T(\rho)$} (m-7-5)
    (m-6-2) edge [bend left=30] (m-4-2)
    (m-6-4) edge [bend right=30] (m-4-4);
    \node at (-6.5,0) {$\mu^\dag$};
    \node at (6.25,0) {$\mu$}; 
    \node[gray] at (0,0) {(i)}; 
    \node[gray] at (-5,1) {(ii)}; 
    \node[gray] at (5,-1) {(ii)'}; 
    \node[gray] at (3,2) {(iii)}; 
    \node[gray] at (-3,-2) {(iii)'};
    \node[gray] at (8,2.5) {(iv)}; 
    \node[gray] at (-8,-2.5) {(iv)'}; 
    \node[gray] at (-8,1.5) {(v)'}; 
    \node[gray] at (8,-1.5) {(v)}; 
    \node[gray] at (2.75,-3.5) {(vi)}; 
    \node[gray] at (-2.75,3.5) {(vi)'};
    \node[gray] at (-2.75,-5.25) {(vii)}; 
    \node[gray] at (2.75,5.25) {(vii)'}; 
    \node[gray] at (-5,-4.5) {(viii)}; 
    \node[gray] at (5,4.5) {(viii)'};
    \node[gray] at (-8,-6) {(ix)}; 
    \node[gray] at (8,6) {(ix)'};
    \node[gray] at (-8,5) {(x)};
    \node[gray] at (8,-5) {(x)'}; 
    \node[gray] at (-4.25,4.75) {(xi)}; 
    \node[gray] at (4.5,-4.75) {(xi)'};
    \node[gray] at (-2.65,6.5) {(xii)}; 
    \node[gray] at (2.65,-6.5) {(xii)'};
  \end{tikzpicture}
  \end{sideways}
  \caption{Diagram proving that $T\mapsto T(I)$ preserves the Frobenius law.}
  \label{fig:froblaw}
\end{figure*}

\begin{lemma}\label{lem:strictmorphism}
  If $T$ is a strong dagger Frobenius monad on a monoidal dagger category, then $T\rho \circ \st \colon A \otimes T(I) \to T(A)$ preserves $\eta^\dag$ and $\mu^\dag$.
\end{lemma}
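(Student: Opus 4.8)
The plan is to exploit that $\phi_A := T(\rho_A)\circ \st_{A,I}$ is \emph{unitary}, and to obtain the two required identities by daggering the corresponding non-dagger identities expressing that $\phi$ is a morphism of monads $-\otimes T(I)\to T$. In this way all the dagger content is pushed, via unitarity, onto a purely monoidal statement.

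First I would record the set-up. By Lemma~\ref{lem:froblaw} the object $B:=T(I)$ is a dagger Frobenius monoid, so by Example~\ref{example:tensor} the endofunctor $S:=-\otimes B$ is a dagger Frobenius monad, with unit $\eta^S_A=(\id[A]\otimes \eta^T_I)\circ\rho_A^{-1}$, multiplication $\mu^S_A=(\id[A]\otimes m)\circ\alpha$, and monoid multiplication $m=\mu^T_I\circ T(\rho)\circ \st$. Each $\phi_A$ is natural in $A$ and unitary, being the composite of the unitary $\st_{A,I}$ with $T(\rho_A)$, which is unitary because $T$ is a dagger functor and $\rho_A$ is unitary. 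Since $S$ and $T$ are dagger functors and all components $\phi_X$ are unitary, daggering the equation $\phi_A\circ\eta^S_A=\eta^T_A$ gives precisely $(\eta^T_A)^\dag\circ\phi_A=(\eta^S_A)^\dag$, and daggering $\phi_A\circ\mu^S_A=\mu^T_A\circ\phi_{TA}\circ S(\phi_A)$, followed by a routine rearrangement using $(S\phi_A)^\dag=(S\phi_A)^{-1}$, gives $(\mu^T_A)^\dag\circ\phi_A=T(\phi_A)\circ\phi_{SA}\circ(\mu^S_A)^\dag$. These are exactly the assertions that $\phi$ preserves $\eta^\dag$ and $\mu^\dag$, so it suffices to prove that $\phi$ is a morphism of monads.

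Preservation of $\eta$ is immediate: by the strong-monad unit law $\st\circ(\id\otimes\eta)=\eta$ and naturality of $\eta^T$,
\[
  \phi_A\circ\eta^S_A
  = T(\rho_A)\circ \st_{A,I}\circ(\id[A]\otimes\eta^T_I)\circ\rho_A^{-1}
  = T(\rho_A)\circ\eta^T_{A\otimes I}\circ\rho_A^{-1}
  = \eta^T_A .
\]
Preservation of $\mu$, that is $\phi_A\circ\mu^S_A=\mu^T_A\circ\phi_{TA}\circ S(\phi_A)$, is the one genuinely computational step. I would establish it by a diagram chase combining the strong-monad multiplication law $\st\circ(\id\otimes\mu)=\mu\circ T(\st)\circ \st$, the strength-associativity axiom $\st\circ\alpha=T(\alpha)\circ \st\circ(\id\otimes\st)$, naturality of $\st$ and $\mu^T$, and the definition $m=\mu^T_I\circ T(\rho)\circ\st$ of the monoid multiplication; this is the standard coherence underlying the adjunction between monoids and strong monads~\cite{wolff:monads}.

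The main obstacle is precisely this multiplication square: the reduction has isolated all the difficulty into the classical fact that $\phi$ is a monad morphism, and within that the unit clause is trivial while the multiplication clause requires carefully threading the two strength axioms past the associator $\alpha$. Everything else—naturality, unitarity of the coherence isomorphisms and of $\st$, and the fact that both $S$ and $T$ are dagger functors—is routine, so no appeal to the Frobenius law is needed beyond its use in Lemma~\ref{lem:froblaw} to make $S$ a dagger Frobenius monad in the first place.
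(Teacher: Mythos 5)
Your proposal is correct, and it is organized genuinely differently from the paper's proof. The paper never formulates the non-dagger statement that $\phi_A = T(\rho_A)\circ\st_{A,I}$ is a monad morphism $-\otimes T(I)\Rightarrow T$; it verifies the two daggered identities head-on: for $\eta^\dag$ a small diagram whose square is naturality of $\eta^\dag$ and whose triangle is the daggered unit axiom (this is where unitarity of $\st$ enters), and for $\mu^\dag$ a six-region chase using unitarity of strength, naturality of $\st$ and of $\mu^\dag$, the strong-functor axiom, and coherence. You instead concentrate all the dagger content into a single cancellation: since $\phi$ is a unitary natural transformation between dagger functors, daggering the monad-morphism squares and cancelling $\phi^\dag\circ\phi$ yields exactly the comonad-morphism squares $(\eta^T)^\dag\circ\phi=(\eta^S)^\dag$ and $(\mu^T)^\dag\circ\phi=T\phi\circ\phi S\circ(\mu^S)^\dag$, and these coincide with the two diagrams in the paper's proof, since $(\eta^S_A)^\dag=\rho_A\circ(\id\otimes\eta_I^\dag)$ and $(\mu^S_A)^\dag$ unfolds to the top path $\alpha\circ(\id\otimes\st^\dag)\circ(\id\otimes T(\rho^{-1}))\circ(\id\otimes\mu^\dag)$ of the paper's second diagram; note that your rearrangement for $\mu^\dag$ also silently uses naturality of $\phi$ to pass from $\phi_{TA}\circ S(\phi_A)$ to $T(\phi_A)\circ\phi_{SA}$, which is fine. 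What your route buys: the remaining core is the classical, dagger-free fact that $\phi$ is the counit of the adjunction of~\cite{wolff:monads}, so it can be cited (or proved once, with no daggers anywhere), and your closing remark that the Frobenius law is never needed is accurate — the paper's own chase does not use it either. What the paper's route buys: self-containedness; the one genuinely computational step, your multiplication square, is only sketched in your proposal, and writing it out is a chase of essentially the same size as the paper's figure, so the reduction saves conceptual bookkeeping rather than computation.
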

\begin{proof}
  To show that $\eta^\dag$ is preserved, it suffices to see that
  \[\begin{tikzpicture}[font=\small]
  \matrix (m) [matrix of math nodes,row sep=2em,column sep=4em,minimum width=2em]
  {
     A\otimes T(I) & \\
     T(A\otimes I) & A\otimes I \\ 
     T(A) &  A\\};
  \path[-stealth]
    (m-1-1) edge node [left] {$\st_{A,I}$} (m-2-1)
            edge node [right] {$\ \ \id\otimes \eta^\dag_I$} (m-2-2)
    (m-2-1) edge node [below] {$\eta^\dag_{A\otimes I}$} (m-2-2)
    (m-2-1) edge node [left] {$T(\rho_A)$} (m-3-1)
    (m-2-2) edge node [right] {$\rho_A$} (m-3-2)
     (m-3-1) edge node [below] {$\eta^\dag_A$} (m-3-2);
  \end{tikzpicture}\]
  commutes. But the rectangle commutes because $\eta^\dag$ is natural, and the triangle commutes because $T$ is a strong monad and strength is unitary.

  To see that $\mu^\dag$ is preserved, consider the following diagram:
  \[\begin{tikzpicture}[font=\small,yscale=.75,xscale=.8]
    \matrix (m) [matrix of math nodes,row sep=2em,column sep=2.5em]
    {
     A\otimes T(I) & A\otimes T^2(I) &A\otimes T(T(I)\otimes I) &A\otimes (T(I)\otimes T(I)) \\
      & A\otimes T^2(I) &A\otimes T(T(I)\otimes I) &(A\otimes T(I))\otimes T(I) \\
      &  &T(A\otimes (T(I)\otimes I)) &T((A\otimes T(I))\otimes I) \\
      &  & &T(A\otimes T(I)) \\
     T(A\otimes I) &  & &T^2(A\otimes I) \\
     T(A) &  & &T^2(A) \\};
    \path[->]
    (m-1-1) edge node [left] {$\st$} (m-5-1)
            edge node [above] {$\id\otimes\mu^\dag$} (m-1-2)
    (m-1-2) edge node [above] {$\id\otimes T(\rho^{-1})$} (m-1-3)
    (m-1-3) edge node [above] {$\id\otimes \st^\dag$} (m-1-4)
    (m-2-3) edge node [above] {$\id\otimes T(\rho)$} (m-2-2)
    (m-1-2) edge node [right] {$\id$} (m-2-2)
    (m-1-4) edge node [right] {$\alpha$} (m-2-4)
    (m-2-4) edge node [right] {$\st$} (m-3-4)
    (m-3-4) edge node [right] {$T(\rho)$} (m-4-4)
    (m-4-4) edge node [right] {$T(\st)$} (m-5-4)
    (m-5-4) edge node [right] {$T^2(\rho)$} (m-6-4)
    (m-5-1) edge node [left] {$T(\rho)$} (m-6-1) 
    (m-6-1) edge node [below] {$\mu^\dag$} (m-6-4)
    (m-2-3) edge node [right] {$\st$} (m-3-3) 
    (m-1-4) edge node [below] {$\quad\id\otimes \st$} (m-2-3) 
    (m-3-3) edge node [above] {$T(\alpha)$} (m-3-4)
    (m-3-3) edge node [below] {$T(\id\otimes\rho)\quad\quad$} (m-4-4)
    (m-5-1) edge node [below] {$\mu^\dag$} (m-5-4)
    (m-2-2) edge [bend right=30] (m-4-4);
    \node at (-.2,-0.7) {$\st$};
    \node[gray] at (-4,1) {(i)};
    \node[gray] at (0,4) {(ii)};
    \node[gray] at (0,2) {(iii)};
    \node[gray] at (4,2) {(iv)};
    \node[gray] at (4,0.5) {(v)}; 
    \node[gray] at (0,-4) {(vi)};
  \end{tikzpicture}\]
  Commutativity of region (i) is a consequence of strength being unitary, (ii) commutes by definition, (iii) commutes as strength is natural, (iv) because $T$ is a strong functor, (v) by coherence and finally (vi) by naturality of $\mu^\dag$. 
\end{proof}

\begin{theorem}\label{thm:strong}
  Let $\cat{C}$ be a monoidal dagger category. The operations $B \mapsto - \otimes B$ and $T \mapsto T(I)$ form an equivalence between dagger Frobenius monoids in $\cat{C}$ and strong dagger Frobenius monads on $\cat{C}$.
\end{theorem}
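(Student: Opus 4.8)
The plan is to realize the two operations as functors that are mutually pseudo-inverse, exhibiting the two natural isomorphisms that witness the equivalence; the conceptually hard half has already been isolated in Lemmas~\ref{lem:froblaw} and~\ref{lem:strictmorphism}.

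First I would fix the two categories and check functoriality. For $B\mapsto -\otimes B$, Example~\ref{example:tensor} already gives that $-\otimes B$ is a dagger Frobenius monad, and equipping it with the strength $\st=\alpha^{-1}$ (unitary, and satisfying the axioms of Definition~\ref{def:strength} by coherence) makes it strong; a monoid homomorphism $g\colon B\to C$ then induces $-\otimes g$, which is a strong morphism of monads precisely because the relevant strengths are associators. For $T\mapsto T(I)$, Lemma~\ref{lem:froblaw} supplies the dagger Frobenius monoid $T(I)$, and a strong morphism $\sigma$ of dagger Frobenius monads restricts to the monoid homomorphism $\sigma_I$. I would record that strongness forces any monad morphism between tensor monads to be of the form $-\otimes g$, so that $g\mapsto -\otimes g$ and $\sigma\mapsto\sigma_I$ are mutually inverse bijections on hom-sets.

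The heart of the argument is the natural isomorphism $T\cong -\otimes T(I)$. The candidate is $\theta_A = T(\rho_A)\circ\st_{A,I}\colon A\otimes T(I)\to T(A)$, which is natural in $A$ since $\st$ and $\rho$ are. The crucial point is that $\theta$ is at once a monoid and a comonoid homomorphism in $[\cat C,\cat C]$ between the dagger Frobenius monoids $-\otimes T(I)$ and $T$: that it preserves $\eta^\dag$ and $\mu^\dag$, i.e.\ is a comonoid homomorphism, is exactly the content of Lemma~\ref{lem:strictmorphism}, while that it preserves $\eta$ and $\mu$, i.e.\ is a morphism of monads, follows from a diagram chase using the two strength--monad axioms (the unit half collapses immediately via $\st\circ(\id\otimes\eta)=\eta$). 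Lemma~\ref{lem:strictmorphismsareiso} then forces $\theta$ to be an isomorphism. I would close this step by verifying that $\theta$ is itself strong and natural in $T$, so that it assembles into a natural isomorphism of functors; both are routine from the strength axioms and the naturality of $\st$ and $\rho$.

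For the opposite composite, the isomorphism $(-\otimes B)(I)=I\otimes B\cong B$ is given by the unitor $\lambda_B$, which is unitary, manifestly a monoid isomorphism, and natural in $B$. Combining the two natural isomorphisms with the hom-set bijection of the first step yields the equivalence. I expect the main obstacle to be the proof that $\theta$ preserves $\mu$: everything else is either quoted from Lemmas~\ref{lem:froblaw}, \ref{lem:strictmorphism} and~\ref{lem:strictmorphismsareiso} or dispatched by coherence, whereas the multiplication square for $\theta$ is the one genuinely new computation---precisely the sort of strength bookkeeping those lemmas were arranged to support.
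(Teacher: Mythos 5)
Your proposal is correct and takes essentially the same route as the paper: the same unit $\lambda$ and counit $T\rho\circ\st$, Frobenius preservation via Example~\ref{example:tensor} and Lemma~\ref{lem:froblaw}, and invertibility of the counit by combining Lemma~\ref{lem:strictmorphism} with Lemma~\ref{lem:strictmorphismsareiso}. The only difference is that the paper gets the bookkeeping (functoriality, naturality, and the fact that $T\rho\circ\st$ is a morphism of monads) for free by citing the known adjunction between monoids and strong monads~\cite{wolff:monads}, whereas you re-derive exactly that data by hand---which is the ``one genuinely new computation'' you flag.
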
 
\begin{proof}
  It is well-known that $B \mapsto - \otimes B$ is left adjoint to $T \mapsto T(I)$, when considered as maps between ordinary monoids and ordinary strong monads, see~\cite{wolff:monads}; the unit of the adjunction is $\lambda \colon I \otimes B \to B$, and the counit is determined by $T\rho \circ \st \colon A \otimes T(I)\to T(A)$. 

  Example~\ref{example:tensor} already showed that $B\mapsto -\otimes B$ preserves the Frobenius law. 
  Lemma~\ref{lem:froblaw} shows that $T \mapsto T(I)$ preserves the Frobenius law, too.
  It remains to prove that they form an equivalence. 
  Clearly the unit of the adjunction is a natural isomorphism. 
  To see that the counit is also a natural isomorphism, combine Lemmas~\ref{lem:strictmorphism} and~\ref{lem:strictmorphismsareiso}.
\end{proof}

It follows from the previous theorem that not every dagger Frobenius monad is strong: as discussed in Section~\ref{sec:monads}, the monad induced by the dagger adjunction of Example~\ref{ex:imdagadj} is not of the form $- \otimes B$ for fixed $B$.

\begin{remark}\label{rem:ratherstrong}
  One might think it too strong to require strength to be unitary. But without it, Theorem~\ref{thm:strong} no longer holds.
\end{remark}
\begin{proof}
  Let us temporarily call a dagger Frobenius monad \emph{rather strong} when it is simultaneously a strong monad. The operations of Theorem~\ref{thm:strong} do not form an adjunction between Frobenius monoids and rather strong Frobenius monads, because the counit of the adjunction would not be a well-defined morphism. Producing a counterexample where the counit does not preserve $\mu^\dag$ comes down to finding a rather strong Frobenius monad with $T(\eta_A) \circ \eta_A \neq \mu_A^\dag \circ \eta_A$ for some $A$. This is the case when $T = - \otimes B$ for a dagger Frobenius monoid $B$ with $\tinyunit \otimes \tinyunit \neq (\tinymult)^\dag \circ \tinyunit$. Such dagger Frobenius monoids certainly exist: if $G$ is any nontrivial group, regarded as a dagger Frobenius monoid in $\cat{Rel}$ as in Example~\ref{ex:groupoidFrob}, then $\tinyunit \otimes \tinyunit$ is the relation $\{(*,(1,1))\}$, but $(\tinymult)^\dag \circ \tinyunit = \{(*,(g,g^{-1})) \mid g \in G\}$.
\end{proof} 

Conceivably, there might be a notion of strength that is weaker than Definition~\ref{def:strength} but stronger than Remark~\ref{rem:ratherstrong}, that would still have reasonable properties. It seems that one would want at least Lemma~\ref{lem:froblaw} to go through, but the proof we've given seems to use invertibility of the strength maps in an essential way. In any case, one wants the underlying monad and comonad to be both strong and costrong. The monad from Example~\ref{ex:imdagadj} satisfies Remark~\ref{rem:ratherstrong} under $\st=\eta\otimes \id$, but then fails to be costrong as a monad.

A Frobenius monoid in a monoidal dagger category is \emph{special} when 
$\smash{\tinymult \circ\tinycomult=\begin{pic} \draw (0,0) to (0,.45); \end{pic}}$.
Theorem~\ref{thm:strong} restricts to an equivalence between special dagger Frobenius monoids and special strong dagger Frobenius monads.

For symmetric monoidal dagger categories, there is also a notion of commutativity for strong monads~\cite{kock:strong,jacobs:weakening}. 
Given a strong dagger Frobenius monad $T$, one can define a natural transformation $\st'_{A,B} \colon T(A) \otimes B \to T(A \otimes B)$ by $T(\sigma_{B,A})\circ st_{B,A}\circ \sigma_{T(A),B}$, and
\begin{align*} 
  dst_{A,B} & = \mu_{A\otimes B}\circ T(st'_{A,B})\circ st_{T(A),B}, \\ 
  dst_{A,B}' & =\mu_{A\otimes B}\circ T(st_{A,B})\circ st'_{A,T(B)}.
\end{align*} 
The strong dagger Frobenius monad is \emph{commutative} when these coincide.
Theorem~\ref{thm:strong} restricts to an equivalence between commutative dagger Frobenius monoids and commutative strong dagger Frobenius monads.
Kleisli categories of commutative monads on symmetric monoidal categories are again symmetric monoidal~\cite{day:kleisli}. This extends to dagger categories.

\begin{theorem}
  If $T$ is a commutative strong dagger Frobenius monad on a symmetric monoidal dagger category $\cat{C}$, then $\Kl(T)$ is a symmetric monoidal dagger category.
\end{theorem}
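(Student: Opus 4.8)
The plan is to assemble the required structure from two results already at hand. Day's theorem that the Kleisli category of a commutative monad on a symmetric monoidal category is again symmetric monoidal~\cite{day:kleisli} supplies the monoidal product and symmetry on $\Kl(T)$, while Lemma~\ref{lem:kleislidagger} supplies a dagger. It then remains only to check that these two pieces of structure cohere, \ie that $\Kl(T)$ satisfies the axioms of a symmetric monoidal dagger category. Concretely this comes down to two things: first, that the coherence isomorphisms $\lambda$, $\rho$, $\alpha$, and $\sigma$ of $\Kl(T)$ are unitary; and second, that the tensor product preserves the dagger, in the sense that $(f \otimes g)^\dag = f^\dag \otimes g^\dag$ for Kleisli morphisms $f$ and $g$. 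The remaining monoidal-category axioms hold already by Day's theorem and need no reverification.

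The first point is immediate. The coherence isomorphisms of $\Kl(T)$ are the images, under the canonical functor $\cat{C} \to \Kl(T)$, of the corresponding isomorphisms of $\cat{C}$, and by Lemma~\ref{lem:kleislidagger} this functor commutes with the dagger. Any functor preserves isomorphisms and their inverses, and a dagger functor additionally preserves daggers; hence it sends a unitary of $\cat{C}$ to a unitary of $\Kl(T)$. As the coherence isomorphisms of $\cat{C}$ are unitary by definition, so are those of $\Kl(T)$.

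The second point is the crux. Writing the Kleisli tensor of $f \colon A \to T(B)$ and $g \colon C \to T(D)$ as $\dst_{B,D} \circ (f \otimes g)$, and expanding the Kleisli dagger of a morphism $h$ as $T(h^\dag) \circ \mu^\dag \circ \eta$ as in Lemma~\ref{lem:kleislidagger}, the desired identity unwinds into an equation between two morphisms built from $\dst$, $\mu^\dag$, $\eta$, and $f^\dag \otimes g^\dag$. I would verify this in the graphical calculus, separating the $f$ and $g$ boxes by naturality and reducing the comparison to a purely structural identity relating $\dst^\dag$ to $\mu^\dag$ and the two strengths; here commutativity of $T$ enters decisively, since it identifies $\dst$ with $\dst'$ and lets the symmetry pass through, while the Frobenius law supplies the self-adjointness needed to collapse the composite. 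A possibly cleaner route is to invoke Theorem~\ref{thm:strong} to replace $T$ by the isomorphic monad $-\otimes T(I)$ for the commutative dagger Frobenius monoid $T(I)$: the monoidal and dagger structures on $\Kl(T)$ transport along this monad isomorphism, and for $-\otimes M$ the strengths are mere associators, so the entire computation reduces to the defining Frobenius and commutativity equations for $M$. Either way, this tensor-dagger compatibility is the main obstacle, being the only step where commutativity and the Frobenius law are genuinely used.
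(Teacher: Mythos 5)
Your proposal is correct and follows essentially the same route as the paper: the monoidal structure comes from Day's result, the coherence isomorphisms are unitary because the canonical functor $\cat{C}\to\Kl(T)$ is a dagger functor, and the only substantive check is $(f\otimes_T g)^\dag = f^\dag\otimes_T g^\dag$, which the paper—exactly as in your ``cleaner route''—handles by invoking Theorem~\ref{thm:strong} to replace $T$ with $-\otimes T(I)$ and then verifying the identity graphically using associativity, commutativity, the unit law, and Lemma~\ref{lem:extendedfrobeniuslaw}. You have correctly identified all the ingredients, including where commutativity and the Frobenius law enter.
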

\begin{proof} 
  The monoidal structure on $\Kl(T)$ is given by $A\otimes_T B=A\otimes B$ on objects and by $f\otimes_T g=\dst\circ (f\otimes g)$ on morphisms. 
  The coherence isomorphisms of $\Kl(T)$ are images of those in $\cat{C}$ under the functor $\cat{C}\to\Kl(T)$. 
  This functor preserves daggers and hence unitaries, making all coherence isomorphisms of $\Kl(T)$ unitary. 
  It remains to check that the dagger on $\Kl(T)$ satisfies $(f\otimes_T g)^\dag=f^\dag\otimes_T g^\dag$. Theorem~\ref{thm:strong} makes $T$ isomorphic to $S=-\otimes T(I)$, and this induces an isomorphism between the respective Kleisli categories that preserves daggers and monoidal structure on the nose. Thus it suffices to check that this equation holds on $\Kl(S)$:
  \[
    (f \otimes_S g)^\dag
    \quad = \quad
    \begin{pic}[scale=.5]
      \node[morphism,hflip] (f) at (0,0) {$f$};
      \node[morphism,hflip] (g) at (2,0) {$g$};
      \node[dot] (d) at (1.25,-.85) {};
      \node[dot] (e) at (2.5,-1.5) {};
      \draw (f.north) to +(0,1.25);
      \draw (g.north) to +(0,1.25);
      \draw ([xshift=-1mm]f.south west) to +(0,-2.5);
      \draw ([xshift=-1mm]g.south west) to +(0,-2.5);
      \draw ([xshift=1mm]f.south east) to[out=-90,in=180] (d.west);
      \draw ([xshift=1mm]g.south east) to[out=-90,in=0] (d.east);
      \draw (d.south) to[out=-90,in=180] (e.west);
      \draw (e.south) to +(0,-.5) node[dot]{};
      \draw (e.east) to[out=0,in=-90] +(1,1) to +(0,2.25) node[right] {$T(I)$};
    \end{pic}
    = \quad
    \begin{pic}[scale=.5]
      \node[morphism,hflip] (f) at (0,0) {$f$};
      \node[morphism,hflip] (g) at (2.5,0) {$g$};
      \node[dot] (d) at (2.75,1.2) {};
      \node[dot] (df) at (.75,-.85) {};
      \node[dot] (dg) at (3.25,-.85) {};
      \draw (f.north) to +(0,2);
      \draw ([xshift=-1mm]g.north) to[out=90,in=-90,looseness=.7] +(-1.5,2);
      \draw ([xshift=-1mm]f.south west) to +(0,-2);
      \draw ([xshift=-1mm]g.south west) to +(0,-2);
      \draw ([xshift=1mm]f.south east) to[out=-90,in=180] (df.west);
      \draw ([xshift=1mm]g.south east) to[out=-90,in=180] (dg.west);
      \draw (df.south) to +(0,-.5) node[dot]{};
      \draw (dg.south) to +(0,-.5) node[dot]{};
      \draw (df.east) to[out=0,in=180,looseness=.7] (d.west);
      \draw (dg.east) to[out=0,in=0] (d.east);
      \draw (d.north) to +(0,1) node[right] {$T(I)$};
    \end{pic}
    \quad = \quad
    f^\dag \otimes_S g^\dag\text{.}
  \]
  But this is a straightforward graphical argument
  \[
    \begin{pic}[scale=.5]
      \node[dot] (l) at (0,0) {};
      \node[dot] (r) at (1,-1) {};
      \draw (r.south) to +(0,-1) node[dot]{};
      \draw (l.south) to[out=-90,in=180] (r.west);
      \draw (r.east) to[out=0,in=-90] +(1,2);
      \draw (l.east) to[out=0,in=-90] +(.66,1);
      \draw (l.west) to[out=180,in=-90] +(-.66,1);
    \end{pic}
    \quad = \quad
    \begin{pic}[scale=.5]
      \node[dot] (l) at (1,0) {};
      \node[dot] (r) at (0,-1) {};
      \draw (r.south) to +(0,-1) node[dot]{};
      \draw (l.south) to[out=-90,in=0] (r.east);
      \draw (r.west) to[out=180,in=-90] +(-1,2);
      \draw (l.west) to[out=180,in=-90] +(-.66,1);
      \draw (l.east) to[out=0,in=-90] +(.66,1);
    \end{pic}
    \quad = \quad    
    \begin{pic}[scale=.4]
      \node[dot] (l) at (1,0) {};
      \node[dot] (r) at (0,-1) {};
      \draw (r.south) to +(0,-1) node[dot]{};
      \draw (l.south) to[out=-90,in=0] (r.east);
      \draw (r.west) to[out=180,in=-90] +(-1,2.5);
      \draw (l.west) to[out=180,in=-90] +(-.5,.5) to[out=90,in=-90] +(1.5,1);
      \draw (l.east) to[out=0,in=-90] +(.5,.5) to[out=90,in=-90] +(-1.5,1);
    \end{pic}
    \quad = \quad
    \begin{pic}[yscale=.4,xscale=.6]
      \node[dot] (l) at (0,0) {};
      \node[dot] (m) at (1,1) {};
      \node[dot] (r) at (2,0) {};
      \draw (l.south) to +(0,-1) node[dot]{};
      \draw (r.south) to +(0,-1) node[dot]{};
      \draw (l.east) to[out=0,in=180] (m.west);
      \draw (m.east) to[out=0,in=180] (r.west);
      \draw (l.west) to[out=180,in=-90] +(-.5,.5) to +(0,2);
      \draw (m.north) to[out=90,in=-90] +(1,1.2);
      \draw (r.east) to[out=0,in=-90] +(.5,.5) to[out=90,in=-90] +(-1.5,2);
    \end{pic}
    \quad = \quad
    \begin{pic}[scale=.5]
      \node[dot] (l) at (0,0) {};
      \node[dot] (r) at (2,0) {};
      \node[dot] (t) at (2,1.7) {};
      \draw (l.south) to +(0,-.5) node[dot]{};
      \draw (r.south) to +(0,-.5) node[dot]{};
      \draw (l.east) to[out=0,in=180] (t.west);
      \draw (r.east) to[out=0,in=0] (t.east);
      \draw (t.north) to +(0,.7);
      \draw (l.west) to[out=180,in=-90] +(-1,2.7);
      \draw (r.west) to[out=180,in=-90] +(-1,2.7);
    \end{pic}
  \]
  using associativity, commutativity, the unit law, and Lemma~\ref{lem:extendedfrobeniuslaw}.
\end{proof}

\section{Closure}\label{sec:coherence}

This final section justifies the Frobenius law from first principles, by explaining it as a coherence property between daggers and closure. In a monoidal dagger category that is closed, monoids and daggers interact in two ways.
First, any monoid picks up an involution by internalizing the dagger.
Second, any monoid embeds into an endohomset by closure, and the dagger is an involution on the endohomset.
The Frobenius law is equivalent to the property that these two canonical involutions coincide.

We start by giving an equivalent formulation of the Frobenius law.

\begin{lemma}\label{lem:equivalentformofFrob}
  A monoid $(A,\tinymult,\tinyunit)$ in a monoidal dagger category is a dagger Frobenius monoid if and only if it satisfies the following equation.
  \begin{equation}\label{eq:equivalentfrobeniuslaw}
      \begin{pic}[scale=.4]
        \node[dot] (t) at (0,1) {};
        \draw (t) to +(0,1.5);
        \draw (t) to[out=0,in=90] (1,0) to (1,-1);
        \draw (t) to[out=180,in=90] (-1,0) to (-1,-1);
      \end{pic}
      \quad 
      =
      \quad
      \begin{pic}
        \node[dot] (a)  at (-.2,0.75) {};
        \node[dot] (d)  at (-.2,1.25) {}; 
        \node[dot] (b) at (.5,0) {};
        \draw (d) to (a);
        \draw (a) to[out=180,in=90] +(-.5,-1);
        \draw (b) to[out=0,in=-90] +(.35,.5) to +(0,.7);
        \draw (a) to[out=0,in=180]  (b);
        \draw (b) to +(0,-.45);
      \end{pic}
    \end{equation}
\end{lemma}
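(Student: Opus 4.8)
The plan is to unwind \eqref{eq:equivalentfrobeniuslaw} into composable form and prove both implications graphically. Writing $m$ and $u$ for the multiplication and unit of $A$, the right-hand side of \eqref{eq:equivalentfrobeniuslaw} is the composite $(u^\dag \otimes \id) \circ (m \otimes \id) \circ (\id \otimes m^\dag)$: comultiply the second input along $m^\dag$, multiply one of its legs into the first input, and cap the outcome with the counit $u^\dag$. I will repeatedly use that $(A, m^\dag, u^\dag)$ is automatically a comonoid, being the dagger of the monoid $(A,m,u)$; in particular the counit law $(u^\dag \otimes \id) \circ m^\dag = \id$ holds for free, as the dagger of the unit law.

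For the forward implication, assume the Frobenius law \eqref{eq:frobeniuslaw}. Inside the right-hand side of \eqref{eq:equivalentfrobeniuslaw} the subdiagram $(m \otimes \id) \circ (\id \otimes m^\dag)$ is one side of \eqref{eq:frobeniuslaw}, so I rewrite it as $(\id \otimes m) \circ (m^\dag \otimes \id)$. This places the counit $u^\dag$ on the first leg of a freshly created $m^\dag$, and the counit law $(u^\dag \otimes \id) \circ m^\dag = \id$ removes that comultiplication, leaving exactly $m$. This direction is thus a two-move graphical computation using only \eqref{eq:frobeniuslaw} and the unit law.

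The backward implication is the crux, and I would drive it with the dagger. Since the dagger is functorial, taking the dagger of \eqref{eq:equivalentfrobeniuslaw} gives $m^\dag = (\id \otimes m) \circ (m^\dag \otimes \id) \circ (u \otimes \id)$, an expression for the comultiplication built from $m$ and the cup $m^\dag \circ u$. I substitute this for the $m^\dag$ appearing in the side $(\id \otimes m) \circ (m^\dag \otimes \id)$ of \eqref{eq:frobeniuslaw}, fuse the two resulting multiplications by associativity, and then apply the dagger of \eqref{eq:equivalentfrobeniuslaw} once more to recognise the result as $m^\dag \circ m$. Finally I note that $m^\dag \circ m$ is self-adjoint and that the two sides of \eqref{eq:frobeniuslaw} are mutual daggers, since $\big((m \otimes \id)(\id \otimes m^\dag)\big)^\dag = (\id \otimes m)(m^\dag \otimes \id)$; taking the dagger of the identity just established therefore shows the other side of \eqref{eq:frobeniuslaw} equals $m^\dag \circ m$ as well, so the two sides coincide. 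I expect the delicate point to be organising the substitution so that associativity and the dagger'd equation actually close the computation up to $m^\dag \circ m$ rather than to a bulkier expression; the self-adjointness of $m^\dag \circ m$ is precisely what lets a single substitution settle both sides at once, instead of requiring a separate mirror-image argument.
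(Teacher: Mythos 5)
Your proof is correct and follows essentially the same route as the paper: the forward direction is the same two-step rewrite (Frobenius law, then the counit law), and your backward direction establishes the same key identity --- that one side of the Frobenius law equals the self-adjoint morphism $\tinymult^\dag \circ \tinymult$ --- by substituting \eqref{eq:equivalentfrobeniuslaw} (in daggered form) twice with associativity in between, exactly as the paper does reading its chain of pictures in the opposite direction. The closing move, using self-adjointness of $\tinymult^\dag \circ \tinymult$ together with the fact that the two sides of \eqref{eq:frobeniuslaw} are each other's daggers, is precisely the paper's final step.
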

\begin{proof}
  The Frobenius law~\eqref{eq:frobeniuslaw} directly implies~\eqref{eq:equivalentfrobeniuslaw}.
  Conversely, \eqref{eq:equivalentfrobeniuslaw} gives
  \[
    \begin{pic}[yscale=0.75]
          \node (0a) at (-0.5,0) {};
          \node (0b) at (0.5,0) {};
          \node[dot] (1) at (0,1) {};
          \node[dot] (2) at (0,2) {};
          \node (3a) at (-0.5,3) {};
          \node (3b) at (0.5,3) {};
          \draw[out=90,in=180] (0a) to (1.west);
          \draw[out=90,in=0] (0b) to (1.east);
          \draw (1.north) to (2.south);
          \draw[out=180,in=270] (2.west) to (3a);
          \draw[out=0,in=270] (2.east) to (3b);
    \end{pic}
    \;=\;
    \begin{pic}
      \node[dot] (l) at (-.75,.5) {};
      \node[dot] (m) at (0,-.5) {};
      \node[dot] (r) at (1,.5) {};
      \draw (r.north) to (1,1) node[dot] {};
      \draw (r.east) to[out=0,in=90,looseness=.5] (1.5,-1);
      \draw (m.south) to (0,-1);
      \draw (m.east) to[out=0,in=180] (r.west);
      \draw (m.west) to[out=180,in=-90] (l.south);
      \draw (l.east) to[out=0,in=-90] (-.25,1);
      \draw (l.west) to[out=180,in=-90] (-1.25,1);
    \end{pic}
    \;=\;
    \begin{pic}[xscale=.75]
      \node[dot] (l) at (-.75,-.5) {};
      \node[dot] (m) at (0,0) {};
      \node[dot] (r) at (1,.5) {};
      \draw (l.south) to (-.75,-1);
      \draw (l.west) to[out=180,in=-90] (-1.5,1);
      \draw (l.east) to[out=0,in=-90] (m.south);
      \draw (m.west) to[out=180,in=-90] (-.5,1);
      \draw (m.east) to[out=0,in=180] (r.west);
      \draw (r.north) to (1,1) node[dot] {};
      \draw (r.east) to[out=0,in=90,looseness=.5] (1.5,-1);
    \end{pic}
    \;=\;
    \begin{pic}[yscale=0.75,xscale=-1]
          \node (0) at (0,0) {}; 
          \node (0a) at (0,1) {};
          \node[dot] (1) at (0.5,2) {};
          \node[dot] (2) at (1.5,1) {};
          \node (3) at (1.5,0) {};
          \node (4) at (2,3) {};
          \node (4a) at (2,2) {};
          \node (5) at (0.5,3) {};
          \draw (0) to (0a.center);
          \draw[out=90,in=180] (0a.center) to (1.east);
          \draw[out=0,in=180] (1.west) to (2.east);
          \draw[out=0,in=270] (2.west) to (4a.center);
          \draw (4a.center) to (4);
          \draw (2.south) to (3);
          \draw (1.north) to (5);
    \end{pic}
  \]
  by associativity. But since the left-hand side is self-adjoint, so is the right-hand side, giving the Frobenius law~\eqref{eq:frobeniuslaw}.
\end{proof}

Any dagger Frobenius monoid forms a duality with itself in the following sense.

\begin{definition}
  Morphisms $\eta \colon I \to A \otimes B$	and $\varepsilon \colon B \otimes A \to I$ in a monoidal category \emph{form a duality} when they satisfy the following equations.
  \[
    \begin{pic}
	  \node[morphism] (u) at (0,0) {$\eta$};
	  \node[morphism] (c) at (.75,.75) {$\varepsilon$};
	  \draw ([xshift=1mm]u.north east) to[out=90,in=-90] ([xshift=-1mm]c.south west);
	  \draw ([xshift=-1mm]u.north west) to +(0,1);
	  \draw ([xshift=1mm]c.south east) to +(0,-1);
    \end{pic}
    \; = \; 
    \begin{pic}
      \draw (0,0) to (0,1.7);
    \end{pic}
    \qquad\qquad
    \begin{pic}[xscale=-1]
	  \node[morphism] (u) at (0,0) {$\eta$};
	  \node[morphism] (c) at (.75,.75) {$\varepsilon$};
	  \draw ([xshift=1mm]u.north west) to[out=90,in=-90] ([xshift=-1mm]c.south east);
	  \draw ([xshift=-1mm]u.north east) to +(0,1);
	  \draw ([xshift=1mm]c.south west) to +(0,-1);
    \end{pic}
    \; = \; 
    \begin{pic}
      \draw (0,0) to (0,1.7);
    \end{pic}
  \]
\end{definition}

In the categories $\cat{Rel}$ and $\cat{FHilb}$, every object $A$ is part of a duality $(A,B,\eta,\varepsilon)$: they are \emph{compact} categories. Moreover, in those categories we may choose $\varepsilon = \eta^\dag \circ \sigma$: they are \emph{compact dagger categories}.

Let \cat{C} be a closed monoidal category, so that there is a correspondence of morphisms $A \otimes B \to C$  and $A \to [B, C]$ called \emph{currying}. Write $A^*$ for $[A,I]$, and write $\ev$ for the counit $A^* \otimes A \to I$. 
If $(A,\tinycomult,\tinycounit)$ is a comonoid in $\cat{C}$, then $A^*$ becomes a monoid with unit and multiplication given by currying $\tinycounit \colon I \otimes A \to I$ and 
\[
  	\begin{pic}[yscale=.7]
   		\node[morphism] (a)  at (0,0.75) {ev}; 
   		\node[morphism] (c)  at (-.25,2) {ev}; 
    	\node[dot] (b) at (.5,0) {};
   		\draw ([xshift=-1mm]a.south west) to +(0,-1);
   		\draw ([xshift=-1mm]c.south west) to[out=-90,in=90] +(-.25,-2.25);
   		\draw (b.east) to[out=0,in=-90] +(.25,.75) to[out=90,in=-90] (c.south east);
   		\draw (a.south east) to[out=-90,in=180]  (b.west);
   		\draw (b) to +(0,-.5);
  	\end{pic} \;\colon (A^* \otimes A^*) \otimes A \to I \text{.}
\]
This monoid and comonoid are related by a map $i\colon A\to A^*$ obtained by currying 
$\begin{pic}[scale=.3]
      \node[dot] (b) at (1,0) {};
      \node[dot] (c) at (1,1) {};
      \draw (b) to  (c);
      \draw (b) to[out=180,in=90] (0,-1);
      \draw (b) to[out=0,in=90] (2,-1);
\end{pic}$.

\begin{proposition}\label{prop:closedfrobenius}
  A monoid $(A,\tinymult,\tinyunit)$ in a monoidal dagger category that is also a closed monoidal category is a dagger Frobenius monoid if and only if $i\colon A\to A^*$ is a monoid homomorphism and $\ev \colon A^* \otimes A \to I$ forms a duality with
  \begin{equation}\label{eq:frobeniusduality}
    	\begin{pic}[scale=.4]
	      	\node[dot] (a) at (0,0) {};
	      	\node[dot] (b) at (0,-1) {};
	      	\node[morphism] (c) at (1,1.5) {$i$};
	      	\draw (b.north) to  (a.south);
	      	\draw (a.east) to[out=0, in=-90]  (c.south);
	      	\draw (a.west) to[out=180,in=-90,looseness=.8] (-1,3);
	      	\draw ([yshift=.5mm]c.north) to  (1,3);
    	\end{pic}\;\colon I \to A \otimes A^* \text{.}
  \end{equation}
\end{proposition}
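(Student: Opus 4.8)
The plan is to push both conditions across the currying bijection and recognise them as, respectively, the zig\nobreakdash-zag (self\nobreakdash-duality) equations and a convolution identity, and then to invoke Lemma~\ref{lem:equivalentformofFrob}. Throughout I would write $\kappa = \tinycounit \circ \tinymult \colon A \otimes A \to I$ for the pairing that $i$ curries, so that $\ev \circ (i \otimes \id) = \kappa$ by definition of $i$, and $\psi = \tinycomult \circ \tinyunit \colon I \to A \otimes A$ for the copairing; note that the coevaluation \eqref{eq:frobeniusduality} is exactly $\phi = (\id \otimes i) \circ \psi$, and that the convolution multiplication of $A^*$ is the currying of the two\nobreakdash-$\ev$ diagram, so precomposing it with $i \otimes i$ turns both evaluations into copies of $\kappa$.

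For the forward implication, suppose $A$ is a dagger Frobenius monoid. I would first record that $(\psi,\kappa)$ makes $A$ self\nobreakdash-dual, that is $(\id \otimes \kappa)\circ(\psi \otimes \id) = \id$ and $(\kappa \otimes \id)\circ(\id \otimes \psi) = \id$; these are the standard consequences of the Frobenius law and the (co)unit laws, provable by the same graphical manipulation as in Lemma~\ref{lem:equivalentformofFrob}. Since $i$ is the isomorphism induced by the nondegenerate pairing $\kappa$, with $\ev \circ (i \otimes \id) = \kappa$, transporting this self\nobreakdash-duality along $i$ makes $(\ev,\phi) = (\ev,(\id \otimes i)\circ\psi)$ a duality, giving Cond~2. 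For Cond~1, unit preservation is automatic, as the left unit law rewrites $\kappa \circ (\tinyunit \otimes \id)$ as $\tinycounit$, which is the unit of $A^*$; multiplication preservation uncurries to the convolution identity equating $\kappa \circ (\tinymult \otimes \id)$ (pair the product with the last argument) with the double pairing obtained by comultiplying the last argument and pairing its two halves separately, and this is a direct graphical consequence of associativity together with the Frobenius law.

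For the converse, assume Cond~1 and Cond~2. Uncurrying the two defining equations of the duality $(\ev,\phi)$ via $\ev \circ (i \otimes \id) = \kappa$ yields the zig\nobreakdash-zag $(\id \otimes \kappa)\circ(\psi \otimes \id) = \id$ on one side together with a splitting of $i$ on the other; uncurrying the multiplication clause of Cond~1, and replacing both evaluations by $\kappa$ as above, yields the convolution identity as a hypothesis. I would then prove the equation \eqref{eq:equivalentfrobeniuslaw}: using the duality to see that $\kappa$ separates points, it suffices to verify \eqref{eq:equivalentfrobeniuslaw} after pairing both sides into $I$ with $\kappa$ (equivalently after applying $\ev \circ (i \otimes \id)$), where it collapses to exactly the convolution identity furnished by Cond~1, the zig\nobreakdash-zag supplying the handedness needed to match the two sides. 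Lemma~\ref{lem:equivalentformofFrob} then upgrades \eqref{eq:equivalentfrobeniuslaw} to the full Frobenius law, so $A$ is a dagger Frobenius monoid.

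The main obstacle is precisely this last combination. Each hypothesis supplies only \emph{one} handedness: the duality gives one zig\nobreakdash-zag and a one\nobreakdash-sided inverse for $i$, while the convolution identity of Cond~1 pairs its arguments in a fixed order, so neither alone yields self\nobreakdash-duality of $A$ compatible with $\kappa = \ev \circ (i \otimes \id)$. The work is to see that together they force nondegeneracy of $\kappa$ and then to assemble \eqref{eq:equivalentfrobeniuslaw} from the two one\nobreakdash-sided pieces. Since the ambient category is only assumed monoidal closed, and not symmetric, this bookkeeping must be carried out without a swap map, and keeping the two sides of each pairing in the correct order is where the care is needed.
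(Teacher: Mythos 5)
Your forward direction is correct and is essentially the paper's own argument (the paper gets the self-duality from Lemma~\ref{lem:extendedfrobeniuslaw} and the unit law rather than Lemma~\ref{lem:equivalentformofFrob}, and obtains the second zig-zag from the first via the universal property of $\ev$, but these are the same computations). The genuine gap is in the converse, and it sits exactly where you placed your ``main obstacle''; the trouble is that as written your reduction tests against $\kappa$ on the wrong side. Writing $\kappa=\tinycounit\circ\tinymult$ and $\psi=\tinycomult\circ\tinyunit$ as you do, uncurrying the duality hypothesis gives the zig-zag $(\id\otimes\kappa)\circ(\psi\otimes\id)=\id$ together with $i\circ j=\id[{A^*}]$ for $j=(\ev\otimes\id)\circ(\id\otimes\psi)$; so $\kappa$ is known to separate morphisms only when they are fed into its \emph{second} argument, and $i$ is only split \emph{epi}. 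Your proposed test ``apply $\ev\circ(i\otimes\id)$'', which feeds the outputs of both sides of \eqref{eq:equivalentfrobeniuslaw} into the \emph{first} argument of $\kappa$, needs first-argument separation, equivalently that $i$ is monic; nothing in the hypotheses yields this except by way of the Frobenius law itself (after which $i$ is invertible), which is what makes the plan ``first force nondegeneracy of $\kappa$, then assemble \eqref{eq:equivalentfrobeniuslaw}'' circular as stated. Worse, even granting the cancellation, the first-argument test does not collapse to your hypothesis: with inputs $x,y,w$ it reads (in informal Sweedler notation) $\kappa(\tinymult(x,y),w)=\kappa(x,y_{(1)})\,\kappa(y_{(2)},w)$, which comultiplies the \emph{middle} input, whereas the uncurried Cond~1 comultiplies the \emph{last} input. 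So neither the cancellation nor the ``collapses to exactly the convolution identity'' step goes through in the stated handedness.

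Both defects disappear if you test in the second argument instead: compare $\kappa\circ(\id\otimes F)$ for the two sides $F$ of \eqref{eq:equivalentfrobeniuslaw}. The cancellation then needs only the zig-zag you actually have, since $F=\bigl(\id\otimes(\kappa\circ(\id\otimes F))\bigr)\circ(\psi\otimes\id)$, and the tested equation, with inputs $w,x,y$, is $\kappa(w,\tinymult(x,y))=\kappa(x,y_{(1)})\,\kappa(w,y_{(2)})$ --- word for word the uncurried multiplication-preservation clause of Cond~1. This repaired argument is precisely the paper's converse: its graphical chain inserts $\psi$ using the zig-zag, rewrites with the convolution identity, and collapses $\psi$ with the zig-zag again, then invokes Lemma~\ref{lem:equivalentformofFrob}. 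Note that this needs no second handedness, no monicity of $i$, and no nondegeneracy claim beyond the single zig-zag (the paper even shows the second duality equation is redundant, being implied by the first by evaluating); so the extra ``work'' you anticipated is not needed once the test is taken on the correct side.
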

\begin{proof}
  The morphism $i$ always preserves units: $\ev \circ (i \otimes \id) \circ (\tinyunit \otimes \id) = \tinycounit \circ \tinymult \circ (\tinyunit \otimes \id) = \tinycounit$.
	It preserves multiplication precisely when:
	\begin{equation}\label{eq:altfrob1}
      \begin{pic}[scale=.4]
        \node[dot] (t) at (0,1) {};
        \node[dot] (b) at (1,0) {};
        \node[dot] (c) at (0,2) {};
        \draw (c) to (t);
        \draw (t) to[out=0,in=90] (b);
        \draw (t) to[out=180,in=90] (-1,0) to (-1,-1);
        \draw (b) to[out=180,in=90] (0,-1);
        \draw (b) to[out=0,in=90] (2,-1);
      \end{pic}
      \quad = \quad
      \begin{pic}[yscale=.4,xscale=-.4]
        \node[dot] (t) at (0,1) {};
        \node[dot] (b) at (1,0) {};
        \node[dot] (c) at (0,2) {};
        \draw (c) to (t);
        \draw (t) to[out=0,in=90] (b);
        \draw (t) to[out=180,in=90] (-1,0) to (-1,-1);
        \draw (b) to[out=180,in=90] (0,-1);
        \draw (b) to[out=0,in=90] (2,-1);
      \end{pic}
      \quad = \quad
      \begin{pic}[scale=.4]
      	\node[dot] (b) at (1.25,-.25) {};
      	\node[morphism] (c) at (1.25,1) {$i$};
      	\node[morphism] (d) at (2.5,3.2) {ev};
      	\draw (b.north) to  (c.south);
      	\draw (b.west) to[out=180,in=90] +(-.75,-1);
      	\draw (b.east) to[out=0,in=90] +(.75,-1);
      	\draw ([xshift=-1.5mm]d.south west) to[out=-90,in=90] (c.north);
      	\draw ([xshift=1mm]d.south east) to +(0,-4);
   	  \end{pic}
   	  \quad = \quad
      \begin{pic}[scale=.4]
      	\node[dot] (a) at (0,1) {};
      	\node[morphism] (b) at (-.8,-.5) {$i$};
      	\node[morphism] (c) at (.8,-.5) {$i$};
      	\node[morphism] (d) at (1.5,2.75) {ev};
      	\draw (a.west) to[out=180,in=90] (b.north);
      	\draw (a.east) to[out=0,in=90] (c.north);
      	\draw (a.north) to[out=90,in=-90] ([xshift=-1mm]d.south west);
      	\draw (b.south) to +(0,-.75);
     	\draw (c.south) to +(0,-.75);
      	\draw ([xshift=1mm]d.south east) to[out=-90,in=90] +(0,-4);
      \end{pic}
      \quad = \quad
      \begin{pic}[scale=.75]
   		\node[dot] (a)  at (-.5,0.75) {};
   		\node[dot] (d)  at (-.5,1.25) {}; 
   		\node[dot] (c)  at (-.5,1.75) {}; 
   		\node[dot] (e)  at (-.5,2.25) {};
    	\node[dot] (b) at (.2,.2) {};
    	\draw (d) to (a);
    	\draw (e) to (c);
   		\draw (a.west) to[out=180,in=90,looseness=.8] +(-.25,-1);
   		\draw (c.west) to[out=180,in=90,looseness=.8] +(-.75,-2);
   		\draw (b.east) to[out=0,in=0] (c.east);
   		\draw (a) to[out=0,in=180]  (b);
   		\draw (b) to +(0,-.45);
      \end{pic}
	\end{equation}
	Furthermore, $\ev \colon A^* \otimes A \to I$ and~\eqref{eq:frobeniusduality} form a duality precisely when:
	\begin{equation}\label{eq:altfrob2}
  	  \begin{pic}
	  	\draw (0,0) to (0,2);
	  \end{pic}
	  \; = \;
      \begin{pic}[scale=.4]
      	\node[dot] (a) at (0,0) {};
      	\node[dot] (b) at (0,-1) {};
      	\node[morphism] (c) at (1,1.5) {$i$};
      	\node[morphism] (d) at (1.6,3) {ev};
      	\draw (b) to  (a);
      	\draw (a.east) to[out=0, in=-90]  (c.south);
      	\draw (a.west) to[out=180,in=-90,looseness=.6] (-1,3.5);
      	\draw ([xshift=-2mm]d.south west) to  (c.north);
      	\draw ([xshift=2mm]d.south east) to  +(0,-4);
      \end{pic}
	  \; = \;
   	  \begin{pic}[scale=.4]
      	\node[dot] (a) at (0,0) {};
      	\node[dot] (b) at (0,-1) {};
      	\node[dot] (c) at (1.75,1.5) {};
      	\node[dot] (d) at (1.75,2.5) {};
      	\draw (b) to  (a);
      	\draw (d) to  (c);
      	\draw (a) to[out=0, in=180]  (c);
      	\draw (a.west) to[out=180,in=-90,looseness=.6] (-1,3.5);
      	\draw (c.east) to[out=0, in=90,looseness=.8]  +(1,-3);
      \end{pic}
      \qquad \qquad \qquad
  	  \begin{pic}
 	 		\draw (0,0) to (0,2);
	  \end{pic}
  	  \; = \;
      \begin{pic}[scale=.4]
      	\node[dot] (a) at (.25,0) {};
      	\node[dot] (b) at (.25,-1) {};
      	\node[morphism] (c) at (1,1.5) {$i$};
      	\node[morphism] (d) at (-1,1.5) {ev};
      	\draw (b) to  (a);
      	\draw (a) to[out=0, in=-90]  (c.south);
      	\draw ([xshift=1mm]d.south east) to[out=-90,in=180] (a);
      	\draw ([xshift=-1mm]d.south west) to +(0,-2.5);
      	\draw ([yshift=.5mm]c.north) to  (1,3);
      \end{pic}
	\end{equation}
	By evaluating both sides, it is easy to see that the left equation implies the right one.
  
    Now, assuming the Frobenius law~\eqref{eq:frobeniuslaw}, Lemma~\ref{lem:extendedfrobeniuslaw} and the unit law guarantee that (the left equation of)~\eqref{eq:altfrob2} is satisfied, as well as~\eqref{eq:altfrob1}:
    \[  
      \begin{pic}[scale=.75]
   		\node[dot] (a)  at (-.5,0.75) {};
   		\node[dot] (d)  at (-.5,1.25) {}; 
   		\node[dot] (c)  at (-.5,1.75) {}; 
   		\node[dot] (e)  at (-.5,2.25) {};
    	\node[dot] (b) at (.2,.2) {};
    	\draw (d) to (a);
    	\draw (e) to (c);
   		\draw (a.west) to[out=180,in=90,looseness=.8] +(-.25,-1);
   		\draw (c.west) to[out=180,in=90,looseness=.8] +(-.75,-2);
   		\draw (b.east) to[out=0,in=0] (c.east);
   		\draw (a) to[out=0,in=180]  (b);
   		\draw (b) to +(0,-.45);
      \end{pic}
      \quad = \quad
      \begin{pic}[scale=.75]
		\node[dot] (b) at (0,3) {};
		\node[dot] (c) at (0,2) {};
		\node[dot] (d) at (0,1.5) {};
		\draw (d.east) to[out=0,in=90] +(.3,-.5);
		\draw (d.west) to[out=180,in=90] +(-.3,-.5);
		\draw (d.north) to (c.south);
		\draw (c.west) to[out=180,in=-90] +(-.25,.5) node[dot]{};
		\draw (c.east) to[out=0,in=0] (b.east);
		\draw (b.north) to +(0,.5) node[dot]{};
		\draw (b.west) to[out=180,in=90] +(-1,-2);
      \end{pic}
      \quad = \quad
      \begin{pic}[scale=.4]
        \node[dot] (t) at (0,1) {};
        \node[dot] (b) at (1,0) {};
        \node[dot] (c) at (0,2) {};
        \draw (c) to (t);
        \draw (t) to[out=0,in=90] (b);
        \draw (t) to[out=180,in=90] (-1,0) to (-1,-1);
        \draw (b) to[out=180,in=90] (0,-1);
        \draw (b) to[out=0,in=90] (2,-1);
      \end{pic}
    \]
    Conversely, equations~\eqref{eq:altfrob1} and~\eqref{eq:altfrob2} imply:
    \[
      \begin{pic}[scale=.9]
        \node[dot] (a)  at (-.2,0.75) {};
        \node[dot] (d)  at (-.2,1.15) {}; 
        \node[dot] (b) at (.5,0) {};
        \draw (d) to (a);
        \draw (a) to[out=180,in=90,looseness=.8] +(-.5,-1.25);
        \draw (b) to[out=0,in=-90,looseness=.8] +(.5,1.5);
        \draw (a) to[out=0,in=180]  (b);
        \draw (b) to +(0,-.45);
      \end{pic}
      \quad = \quad
      \begin{pic}[scale=.5]
	    \node[dot] (a) at (0,2.2) {};
	    \node[dot] (b) at (1.5,3) {};
	    \node[dot] (c) at (2,.2) {};
	    \node[dot] (d) at (1,1) {};
	    \draw (a.south) to +(0,-.5) node[dot]{};
	    \draw (b.north) to +(0,.5) node[dot]{};
	    \draw (d.north) to +(0,.5) node[dot]{};
	    \draw (a.east) to[out=0,in=180] (b.west);
	    \draw (b.east) to[out=0,in=0,looseness=.8] (c.east);
	    \draw (c.west) to[out=180,in=0] (d.east);
	    \draw (c.south) to +(0,-.5);
	    \draw (d.west) to[out=180,in=90,looseness=.7] +(-.5,-1.5);
	    \draw (a.west) to[out=180,in=-90,looseness=.7] +(-.5,2);
      \end{pic}
      \quad = \quad
      \begin{pic}[scale=.5]
      	\node[dot] (l) at (0,0) {};
      	\node[dot] (m) at (1.25,1) {};
      	\node[dot] (r) at (2,0) {};
      	\draw (l.south) to +(0,-.5) node[dot]{};
      	\draw (m.north) to +(0,.5) node[dot]{};
      	\draw (l.west) to[out=180,in=-90,looseness=.8] +(-.5,2);
      	\draw (l.east) to[out=0, in=180] (m.west);
      	\draw (m.east) to[out=0,in=90] (r.north);
      	\draw (r.west) to[out=180,in=90] +(-.5,-1.5);
      	\draw (r.east) to[out=0,in=90] +(.5,-1.5);
      \end{pic}
      \quad = \quad
      \begin{pic}[scale=.5]
        \node[dot] (t) at (0,1) {};
        \draw (t) to +(0,1.5);
        \draw (t) to[out=0,in=90] (1,0) to (1,-1);
        \draw (t) to[out=180,in=90] (-1,0) to (-1,-1);
      \end{pic}
    \]
    Lemma~\ref{lem:equivalentformofFrob} now finishes the proof.
\end{proof}

In any closed monoidal category, $[A,A]$ is canonically a monoid. The `way of the dagger' suggests that there should be interaction between the dagger and closure in categories that have both.

\begin{definition}
  A \emph{sheathed dagger category} is a monoidal dagger category that is also closed monoidal, such that
  \[
    \begin{pic}[scale=.3]
      \node[morphism, hflip] (a) at (3.5,-2) {$\ev_{[A,A]}$};
      \node[dot] (b) at (1,-.25) {};
      \node[dot] (c) at (1,1) {};
      \draw (b) to  (c);
      \draw (b) to[out=180,in=90] (0,-1) to (0,-3.5);
      \draw (b) to[out=0,in=90] (a.north west);
      \draw (a.south) to +(0,-1);
      \draw (a.north east) to +(0,3);
    \end{pic}
    \quad = \quad
    \ev_{[A,A]} 
    \;\colon [A,A] \otimes A \to A
  \]
  for all objects $A$, and for all morphisms $f,g \colon B \to C \otimes [A,A] \colon$
  \[
    \begin{pic}[yscale=.6]
      \node[morphism] (a) at (0,0) {$f$};
      \node[morphism] (b) at (0.63,1) {$\ev_{[A,A]}$};
      \draw (a.south) to +(0,-.5);
      \draw ([xshift=-1mm]a.north west) to +(0,1.5);
      \draw ([xshift=1mm]a.north east) to (b.south west);
      \draw ([xshift=-1mm]b.north) to +(0,.5);
      \draw ([xshift=-1mm]b.south east) to +(0,-1.5);
    \end{pic}
    \; = \;
    \begin{pic}[yscale=.6]
      \node[morphism] (a) at (0,0) {$g$};
      \node[morphism] (b) at (0.62,1) {$\ev_{[A,A]}$};
      \draw (a.south) to +(0,-.5);
      \draw ([xshift=-1mm]a.north west) to +(0,1.5);
      \draw ([xshift=1mm]a.north east) to (b.south west);
      \draw ([xshift=-1mm]b.north) to +(0,.5);
      \draw ([xshift=-1mm]b.south east) to +(0,-1.5);
    \end{pic}
    \qquad \implies \qquad 
    \begin{pic}
      \node[morphism] (a) at (0,0) {$f$};
      \draw (a.south) to +(0,-.5);
      \draw ([xshift=-1mm]a.north west) to +(0,.5);
      \draw ([xshift=1mm]a.north east) to +(0,.5);
    \end{pic}
    \; = \;
    \begin{pic}
      \node[morphism] (a) at (0,0) {$g$};
      \draw (a.south) to +(0,-.5);
      \draw ([xshift=-1mm]a.north west) to +(0,.5);
      \draw ([xshift=1mm]a.north east) to +(0,.5);
    \end{pic}
  \]
\end{definition}

Any compact dagger category is a sheathed dagger category: the first axiom there says that $A^* \otimes A$ with its canonical monoid structure is a dagger Frobenius monoid, and the second axiom then holds because the evaluation morphism is invertible. 
In principle, the definition of sheathed dagger categories is much weaker. Although we have no uncontrived examples of sheathed dagger categories that are not compact dagger categories, we will work with the more general sheathed dagger categories because they are the natural home for the following arguments.
The second axiom merely says that partial evaluation is faithful, which is the case in any well-pointed monoidal dagger category.
In any closed monoidal category, the evaluation map canonically makes $A$ into an algebra for the monad $-\otimes [A,A]$. The first axiom merely says that $A$ is $-\otimes [A,A]$-self-adjoint, as in \eqref{eq:self-adjoint}. It does not assume $[A,A]$ is a dagger Frobenius monoid, nor that $A$ is a FEM-algebra. 
No other plausible conditions are imposed, such as the bifunctor $[-,-]$ being a dagger functor, which does hold in compact dagger categories.
Nevertheless, the following example shows that being a sheathed dagger category is an essentially monoidal notion that degenerates for cartesian categories.

\begin{example}
  If a Cartesian closed category has a dagger, every homset is a singleton. 
\end{example}
\begin{proof}
  Because the terminal object is in fact a zero object, there are natural bijections
  $\hom (A,B)\cong \hom(0\times A,B)\cong \hom (0,B^A)\cong \{*\}$.
\end{proof}

Currying the multiplication of a monoid $(A,\tinymult,\tinyunit)$ in a closed monoidal category gives a monoid homomorphism $R \colon A \to [A,A]$. This is the abstract version of Cayley's embedding theorem, which states that any group embeds into the symmetric group on itself.
If the category also has a dagger, there is also a monoid homomorphism $R_* = [R^\dag,\id[I]] \colon A^* \to [A,A]^*$.

\begin{theorem}\label{thm:sheathedfrobenius}
  In a sheathed dagger category, $(A,\tinymult,\tinyunit)$ is a dagger Frobenius monoid if and only if the following diagram commutes.
  \[
    \begin{tikzpicture}
     \matrix (m) [matrix of math nodes,row sep=2em,column sep=4em,minimum width=2em]
     {
      A & {[A,A]} \\
      A^* & {[A,A]^*} \\};
     \path[->]
     (m-1-1) edge node [left] {$i_A$} (m-2-1)
             edge node [above] {$R$} (m-1-2)
     (m-2-1) edge node [below] {$R_*$} (m-2-2)
     (m-1-2) edge node [right] {$i_{[A,A]}$} (m-2-2);
    \end{tikzpicture}
    \]
\end{theorem}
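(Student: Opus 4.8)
The plan is to reduce commutativity of the square to the equivalent form~\eqref{eq:equivalentfrobeniuslaw} of the Frobenius law, so that Lemma~\ref{lem:equivalentformofFrob} settles both directions at once. Write $m\colon A\otimes A\to A$ and $u\colon I\to A$ for the multiplication and unit of $A$, so that the comonoid structure used to define $i_A$ is $m^\dag,u^\dag$; write $c\colon[A,A]\otimes[A,A]\to[A,A]$ for composition and $e\colon I\to[A,A]$ for the name of $\id[A]$. Since $[A,A]^\ast=[[A,A],I]$, currying is a bijection between morphisms $A\to[A,A]^\ast$ and morphisms $A\otimes[A,A]\to I$; this step needs no axioms, so I would first transpose both legs of the square along the evaluation of $[A,A]^\ast$ and compare the two resulting morphisms $A\otimes[A,A]\to I$.

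Unfolding the definitions, the leg $i_{[A,A]}\circ R$ transposes to $e^\dag\circ c\circ(R\otimes\id)$, using $\ev\circ(i_{[A,A]}\otimes\id)=e^\dag\circ c$ together with $\ev\circ(R\otimes\id)=m$. The leg $R_\ast\circ i_A$ transposes to $u^\dag\circ m\circ(\id[A]\otimes R^\dag)$: indeed $R_\ast=[R^\dag,\id[I]]$ gives $\ev\circ(R_\ast\otimes\id)=\ev\circ(\id\otimes R^\dag)$, while $\ev\circ(i_A\otimes\id)=u^\dag\circ m$ by the definition of $i_A$. Hence the square commutes if and only if $e^\dag\circ c\circ(R\otimes\id[{[A,A]}])=u^\dag\circ m\circ(\id[A]\otimes R^\dag)$ as morphisms $A\otimes[A,A]\to I$.

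The heart of the matter is to rewrite both of these morphisms purely in terms of $m,u,m^\dag,u^\dag$, and this is where the sheathed hypotheses enter. The map $R$ satisfies $\ev\circ(R\otimes\id)=m$, while $c$ and $e$ are determined by $\ev$; the first axiom of a sheathed dagger category---self-adjointness~\eqref{eq:self-adjoint} of the canonical action $\ev\colon[A,A]\otimes A\to A$---is exactly what lets one transport the dagger across $\ev$. Graphically it converts the factor $R^\dag$ on the right-hand side into the comonoid data $m^\dag,u^\dag$ of $A$, and resolves $e^\dag\circ c$ on the left into a cap built from $m^\dag$ and $u^\dag$. The second axiom, faithfulness of partial evaluation, is then used to strip the trailing $[A,A]$-wire and deduce the required identity among morphisms built only from $A$. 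Carrying this out, both sides collapse onto the two sides of~\eqref{eq:equivalentfrobeniuslaw}, and Lemma~\ref{lem:equivalentformofFrob} identifies this with $A$ being a dagger Frobenius monoid; Proposition~\ref{prop:closedfrobenius} provides a convenient cross-check on the unfolded equation.

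The step I expect to be the main obstacle is precisely this internal-hom and dagger bookkeeping: pinning down the variance of $R_\ast$, the order of composition in $[A,A]$, and above all the manipulations of $R^\dag$ and of $i_{[A,A]}$, each of which entangles the dagger with closure. Controlling this interaction is exactly what the two sheathed axioms are for, so I expect the self-adjointness axiom~\eqref{eq:self-adjoint} to carry the real weight, with faithfulness of partial evaluation handling only the final descent from $[A,A]$-indexed morphisms back to morphisms of $A$.
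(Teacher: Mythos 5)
This is essentially the paper's own proof: it too uncurries both legs to the single equation $e^\dag\circ c\circ(R\otimes\id)=u^\dag\circ m\circ(\id\otimes R^\dag)$ on $A\otimes[A,A]$, shows this is equivalent to~\eqref{eq:equivalentfrobeniuslaw} by appending $\id\otimes\ev^\dag$ and invoking the first sheathed axiom, and then finishes with Lemma~\ref{lem:equivalentformofFrob}. Two small slips in your bookkeeping, which carrying out the graphical chain would expose: converting $R^\dag$ into $m^\dag$ uses the dagger of the Cayley equation $\ev\circ(R\otimes\id)=m$ rather than the self-adjointness axiom, and faithfulness of partial evaluation is needed in the \emph{converse} direction---to cancel the appended $\ev^\dag$ and recover the uncurried square from~\eqref{eq:equivalentfrobeniuslaw}---whereas the passage from the square to the $A$-level identity is mere substitution.
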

\begin{proof}
  Evaluating both sides shows that $R$ and $i$ commute precisely when:
  \[
    \begin{pic}[scale=.4]
      \node[dot] (b) at (1,0) {};
      \node[dot] (c) at (1,1) {};
      \node[morphism] (a) at (-.25,-1.5) {$R$};
      \draw (b) to  (c);
      \draw (b) to[out=180,in=90] (a.north);
      \draw (a.south) to +(0,-1);
      \draw (b) to[out=0,in=90] (2,-1) to (2,-3);
    \end{pic}
    \quad 
    = 
    \quad 
    \begin{pic}[scale=.4]
      \node[dot] (b) at (1,0) {};
      \node[dot] (c) at (1,1) {};
      \node[morphism, hflip] (a) at (1.75,-1.5) {$R$};
      \draw (b) to  (c);
      \draw (a.south) to +(0,-1);
      \draw (b) to[out=180,in=90] (0,-1) to (0,-3);
      \draw (b) to[out=0,in=90] (a.north);
    \end{pic}
  \]
  But this is equivalent to
  \[
    \begin{pic}[scale=.4]
        \node[dot] (t) at (0,1) {};
        \draw (t) to +(0,1.5);
        \draw (t) to[out=0,in=90] (1,0) to (1,-1);
        \draw (t) to[out=180,in=90] (-1,0) to (-1,-1);
    \end{pic}
    \quad = \quad
    \begin{pic}
      \node[morphism] (a) at (0,0) {$R$};
      \node[morphism] (b) at (0.44,1) {$\ev_{[A,A]}$};
      \draw (a.south) to +(0,-.5);
      \draw (a.north) to (b.south west);
      \draw ([xshift=-1mm]b.north) to +(0,.5);
      \draw ([xshift=-1mm]b.south east) to +(0,-1.5);
    \end{pic}
    \quad = \quad
    \begin{pic}[scale=.4]
      \node[dot] (b) at (1,0) {};
      \node[dot] (c) at (1,1) {};
      \node[morphism] (a) at (-.25,-1.5) {$R$};
      \node[morphism, hflip] (d) at (2.75,-3.5) {$\ev_{[A,A]}$};
      \draw (b) to  (c);
      \draw (b) to[out=180,in=90] (a.north);
      \draw (a.south) to +(0,-3);
      \draw (b) to[out=0,in=90] (2,-1) to (2,-3);
      \draw (d.north east) to +(0,5);
      \draw (d.south) to +(0,-1);
    \end{pic}
    \quad 
    = 
    \quad 
    \begin{pic}[scale=.4]
      \node[dot] (b) at (1,0) {};
      \node[dot] (c) at (1,1) {};
      \node[morphism, hflip] (a) at (1.75,-1.5) {$R$};
      \node[morphism, hflip] (d) at (2.75,-3.5) {$\ev_{[A,A]}$};
      \draw (b) to  (c);
      \draw (a.south) to +(0,-1);
      \draw (d.north east) to +(0,5);
      \draw (d.south) to +(0,-1);
      \draw (b) to[out=180,in=90] (0,-1) to (0,-5);
      \draw (b) to[out=0,in=90] (a.north);
    \end{pic}
    \quad = \quad 
    \begin{pic}[scale=.4]
      \node[dot] (b) at (1,0) {};
      \node[dot] (c) at (1,1) {};
      \node[dot] (d) at (2.75,-2) {};
      \draw (b) to  (c);
      \draw (d) to[out=0, in=-90] +(1,1) to +(0,2);
      \draw (d.south) to +(0,-1);
      \draw (b) to[out=180,in=90] (0,-1) to (0,-3.5);
      \draw (b) to[out=0,in=180] (d);
    \end{pic} 
  \]
  Lemma~\ref{eq:equivalentfrobeniuslaw} now finishes the proof.
\end{proof}

\begin{corollary} 
  The following are equivalent for a monoid $(A,\tinymult,\tinyunit)$ in a compact dagger category:
  \begin{itemize}
  	\item $(A,\tinymult,\tinyunit)$ is a dagger Frobenius monoid;
  	\item the canonical morphism $i \colon A \to A^*$ is an involution: $i_* \circ i = \id[A]$;
  	\item the canonical Cayley embedding is involutive: $i \circ R = R_* \circ i$.
  \end{itemize}
\end{corollary}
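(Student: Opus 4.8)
The plan is to route all three equivalences through the graphical reformulation of the Frobenius law in Lemma~\ref{lem:equivalentformofFrob}, exploiting that every compact dagger category is in particular a sheathed dagger category. Concretely I would prove the biconditionals (1)$\iff$(3) and (1)$\iff$(2) separately, the first essentially for free and the second by a diagram chase in the compact structure.

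First, the equivalence of the first and third bullets is immediate: it is exactly the content of Theorem~\ref{thm:sheathedfrobenius}. The commuting square there asserts $i_{[A,A]}\circ R = R_*\circ i_A$, where the left-hand $i$ has domain $[A,A]$ and the right-hand $i$ has domain $A$; suppressing these subscripts is precisely the statement that the canonical Cayley embedding is involutive, $i\circ R = R_*\circ i$. Since a compact dagger category is sheathed, Theorem~\ref{thm:sheathedfrobenius} applies verbatim and no further work is needed for (1)$\iff$(3).

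It then remains to show that the Frobenius law is equivalent to $i_*\circ i = \id[A]$. I would unfold both maps using the compact structure. Recall that $i\colon A\to A^*$ is obtained by currying $\tinyunit^\dag\circ\tinymult\colon A\otimes A\to I$, so that $i^\dag\colon A^*\to A$ corresponds to the copairing $\tinymult^\dag\circ\tinyunit\colon I\to A\otimes A$, and $i_* = [i^\dag,\id[I]]$ is the transpose of $i^\dag$ read through the canonical identification $A^{**}\cong A$. Composing, the endomorphism $i_*\circ i\colon A\to A$ straightens to a map assembled from $\tinymult$, $\tinyunit$ and their daggers, and bending the wires with the chosen cups and caps ($\varepsilon = \eta^\dag\circ\sigma$) turns the equation $i_*\circ i=\id[A]$ into precisely the graphical identity~\eqref{eq:equivalentfrobeniuslaw}. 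Lemma~\ref{lem:equivalentformofFrob} then identifies this with the Frobenius law~\eqref{eq:frobeniuslaw}, giving (1)$\iff$(2); combined with the previous paragraph this establishes all three equivalences.

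The main obstacle is the bookkeeping in this last step: one must correctly express $i_* = [i^\dag,\id[I]]$ and the isomorphism $A^{**}\cong A$ as string diagrams, and then recognise the straightened form of $i_*\circ i = \id[A]$ as \eqref{eq:equivalentfrobeniuslaw} rather than a superficially different variant. This is the same kind of wire-bending already carried out in the proof of Theorem~\ref{thm:sheathedfrobenius}, where the commuting square was reduced to the very same equation, so I expect the computation to run in close parallel; the compactness of the category---which supplies an inverse to $\ev$ and the chosen cups and caps---is exactly what makes these manipulations available, and is where compactness rather than mere sheathedness is used.
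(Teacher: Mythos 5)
Your first leg is fine and coincides with the paper's: Theorem~\ref{thm:sheathedfrobenius} applies verbatim because every compact dagger category is sheathed, giving the equivalence of the first and third bullets. The gap is in your second leg. The equation $i_*\circ i=\id[A]$ does \emph{not} straighten to~\eqref{eq:equivalentfrobeniuslaw}. A boundary count already rules this out: $i$, $i^\dag$ and $i_*$ are assembled from the pairing $p=\tinyunit^\dag\circ\tinymult$, its dagger $p^\dag=\tinymult^\dag\circ\tinyunit$, and cups and caps only, so once the snake equations remove the cups and caps, $i_*\circ i=\id[A]$ becomes an equation between morphisms $A\to A$ built from one copy of $p$ and one of $p^\dag$, namely the snake equation
\[
  (p\otimes\id[A])\circ(\id[A]\otimes p^\dag)=\id[A]
\]
(or its mirror image, which is its dagger). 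By contrast, \eqref{eq:equivalentfrobeniuslaw} is an equation between morphisms $A\otimes A\to A$, in which $\tinymult^\dag$ occurs with a \emph{free} input wire; wire-bending preserves the number of boundary wires, so no compact-structure manipulation identifies a two-wire equation with a three-wire one. (This is also why the proof of Theorem~\ref{thm:sheathedfrobenius} is not a precedent for your step: the square $i\circ R=R_*\circ i$ lives in $[A,A]^*$, so the presence of $R$ keeps a third wire available, and that is exactly what lets its straightened form be~\eqref{eq:equivalentfrobeniuslaw}.)

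Moreover the snake equation really is strictly weaker than the Frobenius law, so the implication from your reading of the second bullet to the first genuinely fails. Counterexample in $\cat{FHilb}$: take $A=\mathbb{C}[x]/(x^2)$ with its usual multiplication and unit, and inner product $\langle 1,1\rangle=1$, $\langle 1,x\rangle=\langle x,1\rangle=1$, $\langle x,x\rangle=2$ (positive definite). Then $p^\dag(1)=1\otimes x+x\otimes 1-x\otimes x$, and a direct check gives $(p\otimes\id[A])(\id[A]\otimes p^\dag)=\id[A]$, hence $i_*\circ i=\id[A]$; yet $A$ is not semisimple, so by the classification in Example~\ref{ex:groupoidFrob} it is not a dagger Frobenius monoid (one can also verify directly that~\eqref{eq:frobeniuslaw} fails on $x\otimes x$). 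What your argument drops is the other half of ``$i$ is an involution'': $i$ must also be a \emph{monoid homomorphism} $A\to A^*$. That multiplicativity is the first condition of Proposition~\ref{prop:closedfrobenius}, whose second (duality) condition is what your computation actually captures as $i_*\circ i=\id[A]$; the paper proves the corollary precisely by combining Proposition~\ref{prop:closedfrobenius} (which keeps both conditions) with Theorem~\ref{thm:sheathedfrobenius}. To repair your proof, either invoke Proposition~\ref{prop:closedfrobenius} as the paper does, or supplement your graphical computation with the homomorphism condition and show that multiplicativity of $i$ together with the snake equation yields~\eqref{eq:equivalentfrobeniuslaw} --- which is essentially the converse direction of the paper's proof of that proposition.
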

\begin{proof}
  Combine Proposition~\ref{prop:closedfrobenius} and Theorem~\ref{thm:sheathedfrobenius}.
\end{proof}

\bibliographystyle{plain}

\end{document}